\newcommand{\ra}[1]{\renewcommand{\arraystretch}{#1}}
\Crefname{ALC@unique}{Line}{Lines}
\tikzset{cross/.style={cross out, draw=black, minimum size=2*(#1-\pgflinewidth), inner sep=0pt, outer sep=0pt},
cross/.default={5pt}}
\newtheorem{theorem}{Theorem}[section]
\newtheorem{corollary}[theorem]{Corollary}
\newtheorem{definition}{Definition}
\newenvironment{proof}[1][Proof]{\begin{trivlist}
\item[\hskip \labelsep {\bfseries #1}]}{\end{trivlist}}
\newcommand{\R}{\mathbb{R}}
\newcommand{\C}{\mathbb{C}}
\newcommand{\comp}[1]{#1^\mathsf{c}}
\newcommand{\ie}{\textit{i.e.}}
\newcommand{\eg}{\textit{e.g.}}
\newcommand{\response}[1]{#1}
\newcommand{\SF}[2]{\m{Z}\left(#1;#2\right)}
\newcommand{\SFW}[2]{\widetilde{\m{Z}}\left(#1;#2\right)}
\definecolor{box}{RGB}{126, 107, 53}
\definecolor{near}{RGB}{0, 171, 196}
\definecolor{far}{RGB}{228,73,73}
\newcommand{\B}{\mathcal{B}}
\newcommand{\F}{\mathcal{F}}
\newcommand{\N}{\mathcal{N}}
\renewcommand{\L}{\mathscr{L}}
\newcommand{\rd}{\mathcal{R}}
\newcommand{\sk}{\mathcal{S}}
\newcommand{\sktop}{\B_{t}}
\newcommand{\prox}{\mathcal{P}}
\newcommand{\notprox}{\mathcal{O}}
\newcommand{\rdnocolor}{\mathcal{R}}
\newcommand{\sknocolor}{\mathcal{S}}
\newcommand{\idx}[3]{{\mathsf #1}_{#2#3}}
\newcommand{\m}[1]{{\mathsf #1}}
\newcommand{\I}{\mathcal{I}}
\newcommand{\J}{\mathcal{J}}
\newcommand{\K}{\mathcal{K}}
\newcommand{\lmat}{[}
\newcommand{\rmat}{]}
\renewcommand{\O}{O}
\newcommand{\scinote}[3]{#1\text{e$#2$#3}}
\definecolor{lightergray}{RGB}{240,240,230}
\colorlet{lightgray}{black!20}
\colorlet{darkgray}{black!50}
\colorlet{darkergray}{black!80}
\newcommand{\mypic}[1]{\scalebox{0.55}{
  \begin{tikzpicture}
  \def \slf {#1}
  \pgfmathsetmacro{\nbrone}{\slf-1}
  \pgfmathsetmacro{\nbrtwo}{\slf+1}
  \pgfmathsetmacro{\farone}{\slf-2}
  \pgfmathsetmacro{\fartwo}{\slf+2}

    \clip[rounded corners](-1.5,-2.5) rectangle (8.5,9.5);
    \draw[draw=black,fill=lightergray,line width=0.5mm, rounded corners](-1.5,-2.5) rectangle (8.5,9.5);

    \filldraw[draw=black,thick,fill=white] (0,0) rectangle (8,8);
    \foreach \x in {0,...,7}
    {
    	\draw[thick] (\x,0) -- (\x,8);
    	\draw[thick] (0,\x) -- (8,\x);
    }

    \foreach \x in {0,...,7}
    {
      \foreach \y in {0,...,7}
      {
      	\ifthenelse{\x<\slf \AND \y<\slf}{
      		\filldraw[fill=lightgray,draw=black,thick] (\x+0.5,8-\y-1) rectangle (\x+1,8-\y-0.5);
      		\pgfmathifthenelse{\x>=\y-2 && \x<=\y+2}{"\noexpand\draw[line width=1.5] (\x+0.75,8-\y-0.75) node[cross] {};"}{}\pgfmathresult;
      	}{};

        \ifthenelse{\x<\slf \AND \y=\x}{
        	\filldraw[fill=lightgray,draw=black,thick] (\x,8-\y-0.5) rectangle (\x+0.5,8-\y);
        	\draw[line width=1.5] (\x+0.25,8-\y-0.25) node[cross] {};
        }{};
  		\ifthenelse{\x<\slf \AND \NOT \y<\slf}{
  			\filldraw[fill=lightgray,draw=black,thick] (\x+0.5,8-\y-1) rectangle (\x+1,8-\y);
  			\pgfmathifthenelse{\x>=\y-1 && \x<=\y+1}{"\noexpand\draw[line width=1.5] (\x+0.75,8-\y-0.5) node[cross] {};"}{}\pgfmathresult;
  		}{};
  		\ifthenelse{\NOT \x<\slf \AND \y<\slf}{
  			\filldraw[fill=lightgray,draw=black,thick] (\x,8-\y-1) rectangle (\x+1,8-\y-0.5);
  			 \pgfmathifthenelse{\x>=\y-1 && \x<=\y+1}{"\noexpand\draw[line width=1.5] (\x+0.5,8-\y-0.75) node[cross] {};"}{}\pgfmathresult
  		}{};
  		 \pgfmathifthenelse{\x==\slf-2 && \y==\slf}{"\noexpand\draw[line width=1.5] (\x+0.75,8-\y-0.5) node[cross] {};"}{}\pgfmathresult;
  		 \pgfmathifthenelse{\x==\slf && \y==\slf-2}{"\noexpand\draw[line width=1.5] (\x+0.5,8-\y-0.75) node[cross] {};"}{}\pgfmathresult;
  		\ifthenelse{\NOT \x<\slf \AND \NOT \y<\slf}{\filldraw[fill=lightgray,draw=black,thick] (\x,7-\y) rectangle (\x+1,7-\y+1)}{};
  		\pgfmathifthenelse{\x==\slf && \y==\slf && \slf > 0}{"\noexpand\draw[line width=1.5] (\x+0.5,8-\y-0.5) node[cross] {};"}{}\pgfmathresult;
  	  }
  	  }

    \foreach \x in {0,...,7}{
    	\ifthenelse{\x=\slf}{\fill[opacity = 0.54, box,rounded corners=1ex] (\x,8) -- (\x+1, 8) -- (\x+1, 9) -- (\x,9) -- cycle}{};
 		\ifthenelse{\x=\nbrone \OR \x=\nbrtwo}{\fill[opacity = 0.54, near,rounded corners=1ex] (\x,8) -- (\x+1, 8) -- (\x+1, 9) -- (\x,9) -- cycle}{};
 		\pgfmathifthenelse{\x<=\farone || \x>=\fartwo}{"\noexpand\fill[opacity = 0.54, far,rounded corners=1ex] (\x,8) -- (\x+1, 8) -- (\x+1, 9) -- (\x,9) -- cycle;"}{}\pgfmathresult;
	}
	\foreach \x in {0,...,7}{
    	\ifthenelse{\x=\slf}{\fill[opacity = 0.54, box,rounded corners=1ex] (-1,7-\x) -- (-1,7-\x+1) -- (0,7-\x+1) -- (0,7-\x) -- cycle}{};
 		\ifthenelse{\x=\nbrone \OR \x=\nbrtwo}{\fill[opacity = 0.54, near,rounded corners=1ex] (-1,7-\x) -- (-1,7-\x+1) -- (0,7-\x+1) -- (0,7-\x) -- cycle}{};
 		 \pgfmathifthenelse{\x<=\farone || \x>=\fartwo}{"\noexpand\fill[opacity = 0.54, far,rounded corners=1ex] (-1,7-\x) -- (-1,7-\x+1) -- (0,7-\x+1) -- (0,7-\x) -- cycle;"}{}\pgfmathresult;
	}
    \foreach \x in {1,...,8}
      	\draw (\x-0.5,8.5) node{\Large$\B_\x$};

    \foreach \y in {1,...,8}
      	\draw (-0.5,8-\y+0.5) node{\Large$\B_\y$};

  \draw[draw=black,thick] (0,-1.5) -- (8,-1.5);
  \draw (-0.5,-1.5) node{\Large$\Omega$};

  \foreach\x in {0,...,8}
  	\draw[draw=black,thick] (\x,-1.5+0.25) -- (\x,-1.5-0.25);

  \foreach\x in {0,...,7}
  {
 		\ifthenelse{\x=\slf}{\fill[opacity = 0.54, box,rounded corners=1ex] (\x,-1.5-0.25) -- (\x+1, -1.5-0.25) -- (\x+1, -1.5+0.25) -- (\x,-1.5+0.25) -- cycle}{};
 		\ifthenelse{\x=\nbrone \OR \x=\nbrtwo}{\fill[opacity = 0.54, near,rounded corners=1ex] (\x,-1.5-0.25) -- (\x+1, -1.5-0.25) -- (\x+1, -1.5+0.25) -- (\x,-1.5+0.25) -- cycle}{};
 		 \pgfmathifthenelse{\x<=\farone || \x>=\fartwo}{"\noexpand\fill[opacity = 0.54, far,rounded corners=1ex] (\x,-1.5-0.25) -- (\x+1, -1.5-0.25) -- (\x+1, -1.5+0.25) -- (\x,-1.5+0.25) -- cycle;"}{}\pgfmathresult;

 	\ifthenelse{\NOT \x<\slf}{
	  	\foreach\p in {1,...,4}
	 	{
	 		\ifthenelse{\x=\slf \OR \x=\nbrone \OR \x=\nbrtwo \OR \x=\farone \OR \x=\fartwo}{}{\filldraw[fill=far,draw=darkergray,thick] (\x+\p/4-1/8,-1.5) circle (0.1);};

	 		\ifthenelse{\x=\slf}{\filldraw[fill=box,draw=darkergray,thick] (\x+\p/4-1/8,-1.5) circle (0.1)}{};
	 		\ifthenelse{\x=\nbrone \OR \x=\nbrtwo}{\filldraw[fill=near,draw=darkergray,thick] (\x+\p/4-1/8,-1.5) circle (0.1)}{};
	 		\ifthenelse{\x=\farone \OR \x=\fartwo}{\filldraw[fill=far,draw=darkergray,thick] (\x+\p/4-1/8,-1.5) circle (0.1)}{};
	  	}
  	}{
  		\ifthenelse{\x=0}{
  			\ifthenelse{\x=\slf \OR \x=\nbrone \OR \x=\nbrtwo \OR \x=\farone \OR \x=\fartwo}{}{\filldraw[fill=far,draw=darkergray,thick] (\x+4/4-1/8,-1.5) circle (0.1);};
	 		\ifthenelse{\x=\slf}{\filldraw[fill=box,draw=darkergray,thick] (\x+4/4-1/8,-1.5) circle (0.1)}{};
	 		\ifthenelse{\x=\nbrone \OR \x=\nbrtwo}{\filldraw[fill=near,draw=darkergray,thick] (\x+4/4-1/8,-1.5) circle (0.1)}{};
	 		\ifthenelse{\x=\farone \OR \x=\fartwo}{\filldraw[fill=far,draw=darkergray,thick] (\x+4/4-1/8,-1.5) circle (0.1)}{};
  		}{
  		\ifthenelse{\x=7}{
  			\ifthenelse{\x=\slf \OR \x=\nbrone \OR \x=\nbrtwo \OR \x=\farone \OR \x=\fartwo}{}{\filldraw[fill=far,draw=darkergray,thick] (\x+1/4-1/8,-1.5) circle (0.1);};

	 		\ifthenelse{\x=\slf}{\filldraw[fill=box,draw=darkergray,thick] (\x+1/4-1/8,-1.5) circle (0.1)}{};
	 		\ifthenelse{\x=\nbrone \OR \x=\nbrtwo}{\filldraw[fill=near,draw=darkergray,thick] (\x+1/4-1/8,-1.5) circle (0.1)}{};
	 		\ifthenelse{\x=\farone \OR \x=\fartwo}{\filldraw[fill=far,draw=darkergray,thick] (\x+1/4-1/8,-1.5) circle (0.1)}{};
  		}{
  		\foreach\p in {1,4}
	 	{
	 		\ifthenelse{\x=\slf \OR \x=\nbrone \OR \x=\nbrtwo \OR \x=\farone \OR \x=\fartwo}{}{\filldraw[fill=far,draw=darkergray,thick] (\x+\p/4-1/8,-1.5) circle (0.1);};

	 		\ifthenelse{\x=\slf}{\filldraw[fill=box,draw=darkergray,thick] (\x+\p/4-1/8,-1.5) circle (0.1)}{};
	 		\ifthenelse{\x=\nbrone \OR \x=\nbrtwo}{\filldraw[fill=near,draw=darkergray,thick] (\x+\p/4-1/8,-1.5) circle (0.1)}{};
	 		\ifthenelse{\x=\farone \OR \x=\fartwo}{\filldraw[fill=far,draw=darkergray,thick] (\x+\p/4-1/8,-1.5) circle (0.1)}{};
	  	}
  		};};
  	}

  	}

  	\draw[draw=black,line width=0.25mm](-1.5,-0.5) -- (8.5,-0.5);

  	\draw[use as bounding box,draw=black,line width=0.5mm, rounded corners](-1.5,-2.5) rectangle (8.5,9.5);
  \end{tikzpicture}}

}
\newcommand{\mypicnext}{\scalebox{0.55}{
  \begin{tikzpicture}
  \def \slf {0}
  \pgfmathsetmacro{\nbrone}{\slf-1}
  \pgfmathsetmacro{\nbrtwo}{\slf+1}
  \pgfmathsetmacro{\farone}{\slf-2}
  \pgfmathsetmacro{\fartwo}{\slf+2}
    \pgfmathsetmacro{\farthree}{\slf-3}
  \pgfmathsetmacro{\farfour}{\slf+3}

    \clip[rounded corners](-1.5,-2.5) rectangle (8.5,9.5);
    \draw[draw=black,fill=lightergray,line width=0.5mm, rounded corners](-1.5,-2.5) rectangle (8.5,9.5);

    \filldraw[draw=black,thick,fill=white] (0,0) rectangle (8,8);
    \foreach \x in {0,...,7}
    {
    	\draw[thick] (\x,0) -- (\x,8);
    	\draw[thick] (0,\x) -- (8,\x);
    }

    \foreach \x in {4,...,7}
    {
    	\filldraw[fill=lightgray,draw=black,thick] (\x+0.5,8-\x-1) rectangle (\x+1,8-\x-0.5);
    	\draw[line width=1.5] (\x+0.75,8-\x-0.75) node[cross] {};
    	\filldraw[fill=lightgray,draw=black,thick] (\x,8-\x-0.5) rectangle (\x+0.5,8-\x);
    	\draw[line width=1.5] (\x+0.25,8-\x-0.25) node[cross] {};
    }

    \foreach \x in {0,...,3}
    {
      \foreach \y in {0,...,3}
      {
      	\filldraw[fill=lightgray,draw=black,thick] (\x,7-\y) rectangle (\x+1,7-\y+1);
      	\pgfmathifthenelse{\x>=\y-1 && \x<=\y+1}{"\noexpand\draw[line width=1.5] (\x+0.5,8-\y-0.5) node[cross] {};"}{}\pgfmathresult
      }
    }

    \foreach \x in {0,...,7}{
    	\ifthenelse{\x=\slf}{\fill[opacity = 0.54, box,rounded corners=1ex] (\x,8) -- (\x+1, 8) -- (\x+1, 9) -- (\x,9) -- cycle}{};
 		\ifthenelse{\x=\nbrone \OR \x=\nbrtwo}{\fill[opacity = 0.54, near,rounded corners=1ex] (\x,8) -- (\x+1, 8) -- (\x+1, 9) -- (\x,9) -- cycle}{};
 		\ifthenelse{\x=\farone \OR \x=\fartwo \OR \x=\farthree \OR \x=\farfour}{\fill[opacity = 0.54, far,rounded corners=1ex] (\x,8) -- (\x+1, 8) -- (\x+1, 9) -- (\x,9) -- cycle}{};
	}
	\foreach \x in {0,...,7}{
    	\ifthenelse{\x=\slf}{\fill[opacity = 0.54, box,rounded corners=1ex] (-1,7-\x) -- (-1,7-\x+1) -- (0,7-\x+1) -- (0,7-\x) -- cycle}{};
 		\ifthenelse{\x=\nbrone \OR \x=\nbrtwo}{\fill[opacity = 0.54, near,rounded corners=1ex] (-1,7-\x) -- (-1,7-\x+1) -- (0,7-\x+1) -- (0,7-\x) -- cycle}{};
 		\ifthenelse{\x=\farone \OR \x=\fartwo\OR \x=\farthree \OR \x=\farfour}{\fill[opacity = 0.54, far,rounded corners=1ex] (-1,7-\x) -- (-1,7-\x+1) -- (0,7-\x+1) -- (0,7-\x) -- cycle}{};
	}
    \foreach \x/\xeval in {1/9,2/10,3/11,4/12}
      	\draw (\x-0.5,8.5) node{\Large$\B_{\xeval}$};
    \draw[black,very thick] (4.5,8.5) -- (7.5,8.5);
    \draw(6,8.5) node[above] {\large(inactive DOFs)};

    \foreach \y/\yeval in {1/9,2/10,3/11,4/12}
      	\draw (-0.5,8-\y+0.5) node{\Large$\B_{\yeval}$};
     \draw[black,very thick] (-0.5,8-4.5) -- (-0.5,8-7.5);

  \draw[draw=black,thick] (0,-1.5) -- (8,-1.5);
  \draw (-0.5,-1.5) node{\Large$\Omega$};

  \foreach\x in {0,...,4}
  	\draw[draw=black,thick] (2*\x,-1.5+0.25) -- (2*\x,-1.5-0.25);

  \foreach\x in {0,...,4}
  {
 		\ifthenelse{\x=\slf}{\fill[opacity = 0.54, box,rounded corners=1ex] (2*\x,-1.5-0.25) -- (2*\x+2, -1.5-0.25) -- (2*\x+2, -1.5+0.25) -- (2*\x,-1.5+0.25) -- cycle}{};
 		\ifthenelse{\x=\nbrone \OR \x=\nbrtwo}{\fill[opacity = 0.54, near,rounded corners=1ex] (2*\x,-1.5-0.25) -- (2*\x+2, -1.5-0.25) -- (2*\x+2, -1.5+0.25) -- (2*\x,-1.5+0.25) -- cycle}{};
 		\ifthenelse{\x=\farone \OR \x=\fartwo \OR \x=\farthree \OR \x=\farfour}{\fill[opacity = 0.54, far,rounded corners=1ex] (2*\x,-1.5-0.25) -- (2*\x+2, -1.5-0.25) -- (2*\x+2, -1.5+0.25) -- (2*\x,-1.5+0.25) -- cycle}{};

}
\foreach\p in {4,5,8,9,12,13,16,17,20,21,24,25,28,29}{
	\ifthenelse{\p < 9}{
		\filldraw[fill=box,draw=darkergray,thick] (\p/4-1/8,-1.5) circle (0.1);
	}{};
	\ifthenelse{\p>8 \AND \p < 17}{
		\filldraw[fill=near,draw=darkergray,thick] (\p/4-1/8,-1.5) circle (0.1)
	}{};
	\ifthenelse{\p>16}{ 
		\filldraw[fill=far,draw=darkergray,thick] (\p/4-1/8,-1.5) circle (0.1)
	}{};
}

  	\draw[draw=black,line width=0.25mm](-1.5,-0.5) -- (8.5,-0.5);

  	\draw[use as bounding box,draw=black,line width=0.5mm, rounded corners](-1.5,-2.5) rectangle (8.5,9.5);
  \end{tikzpicture}}

}
\newcommand{\mypicnextnext}{\scalebox{0.55}{
  \begin{tikzpicture}
  \def \slf {3}
  \pgfmathsetmacro{\nbrone}{\slf-1}
  \pgfmathsetmacro{\nbrtwo}{\slf+1}
  \pgfmathsetmacro{\farone}{\slf-2}
  \pgfmathsetmacro{\fartwo}{\slf+2}
    \pgfmathsetmacro{\farthree}{\slf-3}
  \pgfmathsetmacro{\farfour}{\slf+3}

    \clip[rounded corners](-1.5,-2.5) rectangle (8.5,9.5);
    \draw[draw=black,fill=lightergray,line width=0.5mm, rounded corners](-1.5,-2.5) rectangle (8.5,9.5);

    \filldraw[draw=black,thick,fill=white] (0,0) rectangle (8,8);
    \foreach \x in {0,...,7}
    {
    	\draw[thick] (\x,0) -- (\x,8);
    	\draw[thick] (0,\x) -- (8,\x);
    }

    \foreach \x in {0,1,2,4,5,6,7}
    {
    	\filldraw[fill=lightgray,draw=black,thick] (\x+0.5,8-\x-1) rectangle (\x+1,8-\x-0.5);
    	\draw[line width=1.5] (\x+0.75,8-\x-0.75) node[cross] {};
    	\filldraw[fill=lightgray,draw=black,thick] (\x,8-\x-0.5) rectangle (\x+0.5,8-\x);
    	\draw[line width=1.5] (\x+0.25,8-\x-0.25) node[cross] {};
    }

    \foreach \x in {0,...,2}
    {
      \foreach \y in {0,...,2}
      {
      	\filldraw[fill=lightgray,draw=black,thick] (\x+0.5,7-\y) rectangle (\x+1,7-\y+0.5);
      	\pgfmathifthenelse{\x>=\y-2 && \x<=\y+2}{"\noexpand\draw[line width=1.5] (\x+0.75,8-\y-0.75) node[cross] {};"}{}\pgfmathresult;
      }
    }

    \filldraw[fill=lightgray,draw=black,thick] (3,7-3) rectangle (4,7-3+1);
    \draw[line width=1.5] (3+0.5,8-3-0.5) node[cross] {};

    \foreach \x in {0,...,2}
    {
    	\filldraw[fill=lightgray,draw=black,thick] (\x+0.5,7-3) rectangle (\x+1,7-3+1);
    	\filldraw[fill=lightgray,draw=black,thick] (3,7-\x) rectangle (3+1,7-\x+0.5);
    }

    \draw[line width=1.5] (3+0.5,8-2-0.75) node[cross] {};
    \draw[line width=1.5] (2+0.75,8-3-0.5) node[cross] {};

    \draw[line width=1.5] (1+0.75,8-3-0.5) node[cross] {};
    \draw[line width=1.5] (3+0.5,8-1-0.75) node[cross] {};

    \foreach \x in {0,...,3}{
    	\ifthenelse{\x=\slf}{\fill[opacity = 0.54, box,rounded corners=1ex] (\x,8) -- (\x+1, 8) -- (\x+1, 9) -- (\x,9) -- cycle}{};
 		\ifthenelse{\x=\nbrone \OR \x=\nbrtwo}{\fill[opacity = 0.54, near,rounded corners=1ex] (\x,8) -- (\x+1, 8) -- (\x+1, 9) -- (\x,9) -- cycle}{};
 		\ifthenelse{\x=\farone \OR \x=\fartwo\OR \x=\farthree\OR \x=\farfour}{\fill[opacity = 0.54, far,rounded corners=1ex] (\x,8) -- (\x+1, 8) -- (\x+1, 9) -- (\x,9) -- cycle}{};
	}
	\foreach \x in {0,...,3}{
    	\ifthenelse{\x=\slf}{\fill[opacity = 0.54, box,rounded corners=1ex] (-1,7-\x) -- (-1,7-\x+1) -- (0,7-\x+1) -- (0,7-\x) -- cycle}{};
 		\ifthenelse{\x=\nbrone \OR \x=\nbrtwo}{\fill[opacity = 0.54, near,rounded corners=1ex] (-1,7-\x) -- (-1,7-\x+1) -- (0,7-\x+1) -- (0,7-\x) -- cycle}{};
 		\ifthenelse{\x=\farone \OR \x=\fartwo\OR \x=\farthree\OR \x=\farfour}{\fill[opacity = 0.54, far,rounded corners=1ex] (-1,7-\x) -- (-1,7-\x+1) -- (0,7-\x+1) -- (0,7-\x) -- cycle}{};
	}
    \foreach \x/\xeval in {1/9,2/10,3/11,4/12}
      	\draw (\x-0.5,8.5) node{\Large$\B_{\xeval}$};
    \draw[black,very thick] (4.5,8.5) -- (7.5,8.5);
    \draw(6,8.5) node[above] {\large(inactive DOFs)};

    \foreach \y/\yeval in {1/9,2/10,3/11,4/12}
      	\draw (-0.5,8-\y+0.5) node{\Large$\B_{\yeval}$};
     \draw[black,very thick] (-0.5,8-4.5) -- (-0.5,8-7.5);

  \draw[draw=black,thick] (0,-1.5) -- (8,-1.5);
  \draw (-0.5,-1.5) node{\Large$\Omega$};

  \foreach\x in {0,...,4}
  	\draw[draw=black,thick] (2*\x,-1.5+0.25) -- (2*\x,-1.5-0.25);

  \foreach\x in {0,...,3}
  {
 		\ifthenelse{\x=\slf}{\fill[opacity = 0.54, box,rounded corners=1ex] (2*\x,-1.5-0.25) -- (2*\x+2, -1.5-0.25) -- (2*\x+2, -1.5+0.25) -- (2*\x,-1.5+0.25) -- cycle}{};
 		\ifthenelse{\x=\nbrone \OR \x=\nbrtwo}{\fill[opacity = 0.54, near,rounded corners=1ex] (2*\x,-1.5-0.25) -- (2*\x+2, -1.5-0.25) -- (2*\x+2, -1.5+0.25) -- (2*\x,-1.5+0.25) -- cycle}{};
 		\ifthenelse{\x=\farone \OR \x=\fartwo\OR \x=\farthree\OR \x=\farfour}{\fill[opacity = 0.54, far,rounded corners=1ex] (2*\x,-1.5-0.25) -- (2*\x+2, -1.5-0.25) -- (2*\x+2, -1.5+0.25) -- (2*\x,-1.5+0.25) -- cycle}{};

}
\foreach\p in {8,9,16,17,24,25,28,29}{
	\ifthenelse{\p < 9}{
		\filldraw[fill=far,draw=darkergray,thick] (\p/4-1/8,-1.5) circle (0.1);
	}{};
	\ifthenelse{\p>8 \AND \p < 17}{
		\filldraw[fill=far,draw=darkergray,thick] (\p/4-1/8,-1.5) circle (0.1)
	}{};
	\ifthenelse{\p>16 \AND \p < 25}{
		\filldraw[fill=near,draw=darkergray,thick] (\p/4-1/8,-1.5) circle (0.1)
	}{};
    \ifthenelse{\p>24}{
		\filldraw[fill=box,draw=darkergray,thick] (\p/4-1/8,-1.5) circle (0.1)
	}{};
}

  	\draw[draw=black,line width=0.25mm](-1.5,-0.5) -- (8.5,-0.5);

  	\draw[use as bounding box,draw=black,line width=0.5mm, rounded corners](-1.5,-2.5) rectangle (8.5,9.5);
  \end{tikzpicture}}

}
\begin{document}

\title{A recursive skeletonization factorization based on \\strong admissibility}

\author{
	Victor Minden\thanks{Institute for Computational and Mathematical Engineering, Stanford University, Stanford, CA 94305 (\url{vminden@stanford.edu}). {\bf Funding:} Stanford Graduate Fellowship in Science \& Engineering and U.S. Department of Energy Computational Science
Graduate Fellowship (grant number DE-FG02-97ER25308).}\and
  Kenneth L. Ho%
  \thanks{Department of Mathematics, Stanford University, Stanford, CA 94305.  Current address: TSMC Technology Inc., 2851 Junction Ave., San Jose, CA 95134. (\url{klho@alumni.caltech.edu}). {\bf Funding:} National Science Foundation Mathematical Sciences Postdoctoral Research Fellowship (grant number DMS-1203554).}
\and
	Anil Damle%
	\thanks{Department of Mathematics, University of California, Berkeley, Berkeley, CA 94720 (\url{damle@berkeley.edu}). {\bf Funding:} National Science Foundation Mathematical Sciences Postdoctoral Research Fellowship (grant number DMS-1606277).}%
	\and
	Lexing Ying\thanks{Department of Mathematics and Institute for Computational and Mathematical Engineering, Stanford University, Stanford, CA 94305 (\url{lexing@math.stanford.edu}). {\bf Funding:} National Science Foundation (grant number DMS-1521830) and U.S. Department of Energy Advanced Scientific Computing Research program (grant number DE-FC02-13ER26134 and DE-SC0009409).}
}

\maketitle

\begin{abstract}
We introduce the strong recursive skeletonization factorization (RS-S), a new approximate matrix factorization based on recursive skeletonization for solving discretizations of linear integral equations associated with elliptic partial differential equations in two and three dimensions (\response{and other} matrices with similar hierarchical rank structure).  Unlike previous skeletonization-based factorizations, RS-S uses a simple modification of skeletonization, strong skeletonization, which compresses only far-field interactions.  This leads to an approximate factorization in the form of a product of many block unit-triangular matrices that may be used as a preconditioner or moderate-accuracy direct solver, with dramatically reduced rank growth.  We further combine the strong skeletonization procedure with alternating near-field compression to obtain the hybrid recursive skeletonization factorization (RS-WS), a modification of RS-S that exhibits reduced storage cost in many settings.  Under suitable rank assumptions both RS-S and RS-WS exhibit linear computational complexity, which we demonstrate with a number of numerical examples.
\end{abstract}

\ra{1.4}

\section{Introduction}
\label{sec:intro}

Given a kernel function $\response{K(z)}$, we consider the integral equation
 \begin{align}\label{eq:bie}
  a(x) u(x) + b(x) \int_{\Omega} K(x-y) c(y) u(y) \, dy = f(x), \quad x \in \Omega \subset \R^{d}
  \end{align}
in dimension $d=2$ or $3$.  Here, $a(x),$ $b(x)$, and $c(y)$ are given functions that typically represent material parameters, $f(x)$ is some known right-hand side, and $u(x)$ is the unknown function to be determined.

We focus in this paper on the case where $\response{K(z)}$ is associated with some underlying elliptic partial differential equation ($\ie$, it is the Green's function or its derivative).  For optimal complexity of our methods the kernel $\response{K(z)}$ should not exhibit significant oscillation away from the origin, though this is not strictly necessary to apply the basic machinery.  In this setting, \eqref{eq:bie} remains rather general and includes problems such as the Laplace equation, the \response{Lippmann}-Schwinger equation, and the Helmholtz equation in the low- to moderate-frequency regime.  Further, while we concentrate on the case where $u(x)$ is scalar-valued, extension to the vector-valued case (\eg, the Stokes or elasticity equations) is straightforward.

Discretization of \eqref{eq:bie} using typical approaches such as collocation, the Nystr\"om method, or the Galerkin method leads to a linear system with $N$ degrees of freedom (DOFs)
\begin{align}\label{eq:linear}
\m{K}u &= f,
\end{align}
where the entries of the matrix $\m{K}\in\C^{N\times N}$ are dictated by the kernel $\response{K(z)}$ and the discretization scheme.  For example, in the case where our domain is the unit square $\Omega=[0,1]^2$ a simple Nystr\"om approximation to the integral using a regular grid with $\sqrt{N}$ points in each direction yields the discrete system
\begin{align}\label{eq:discbie}
\left[a(x_i) + w_i\right]u_i + \frac{b(x_i)}{N}\sum_{i\ne j}K(x_i - x_j)c(x_j)u_j = f(x_i),\quad i=1,\dots,N,
\end{align}
where the discrete solution $\{u_i\} \approx \{u(x_i)\}$ approximates the continuous solution on the grid and each term $w_i u_i$ corresponds to some discretization of diagonal entries of $\m{K}$.  \response{Because $K(z)$ is frequently singular at the origin, this discretization may be more involved than that of the off-diagonal entries.}  While more complicated and higher-order discretization schemes exist, \eqref{eq:discbie} illustrates the key feature that off-diagonal entries of $\m{K}$ are given essentially by \emph{kernel interactions between distinct points in space}.  In this paper we develop a method exploiting this fact and its consequences to efficiently solve \eqref{eq:linear}.

\subsection{Background and previous work}
Because $\m{K}$ in \eqref{eq:linear} is dense and generally large in practice, traditional direct factorizations of $\m{K}$ such as the LU factorization are typically too expensive due to the associated $\O(N^3)$ time complexity and $\O(N^2)$ storage cost.

Given the availability of fast schemes for applying $\m{K}$ such as fast multipole methods (FMMs) \cite{fong,fastmultipole,fmm3d,kifmm}, iterative methods such as the conjugate gradient method (CG) \cite{cg} form a tempting alternative to direct methods.  For first-kind integral equations or problems where $a(x)$, $b(x),$ or $c(x)$ exhibit high contrast, however, convergence is typically slow leading to a lack of robustness.  In other words, while each iteration is relatively fast, the number of iterations necessary to attain reasonable accuracies can be unreasonably large.

The above considerations have led to the development of a plethora of alternative methods for solving \eqref{eq:linear} approximately by exploiting properties of the kernel $\response{K(z)}$ and the underlying physical structure of the problem.  In particular, such methods take advantage of the fact that $\m{K}$ exhibits \emph{hierarchical block low-rank structure}.

A large body of work pioneered by Hackbusch and collaborators on the algebra of $\mathcal{H}$-matrices (and $\mathcal{H}^2$-matrices) provides an important and principled theoretical framework for obtaining linear or quasilinear complexity when working with matrices exhibiting such structure \cite{Hackbusch,HackbuschB,HackbuschK}.  Inside the asymptotic scaling of this approach, however, lurk large constant factors that hamper practical performance, particularly in the 3D case.

\begin{figure}
\centering
  \begin{tikzpicture}
\filldraw[very thick,draw=black,fill=box,fill opacity=0.54] (0,0) rectangle (2,2);
\filldraw[very thick,draw=black,fill=box,fill opacity=0.54] (5,0) rectangle(7,2);
\draw[<->,thick] (2.2,1) --  (4.8,1) node[above,midway] {$D'\ge D$};

\draw[|-|,thick] (0-0.3,0) --  (0-0.3,2) node[left,midway] {$D$};
\draw[|-|,thick] (0,2+0.3) --  (2,2+0.3) node[above,midway] {$D$};
\draw[|-|,thick] (5,2+0.3) --  (7,2+0.3) node[above,midway] {$D$};
\draw[|-|,thick] (7+0.3,0) --  (7+0.3,2) node[right,midway] {$D$};

\foreach \x in {0,...,7}
  \foreach \y in {0,...,7}
    \filldraw[fill=box,draw=darkergray,thick] (\x/4 + 0.125,\y/4 +0.125) circle (0.05);

\foreach \x in {0,...,7}
  \foreach \y in {0,...,7}
    \filldraw[fill=box,draw=darkergray,thick] (\x/4 + 0.125+5,\y/4 +0.125) circle (0.05);

  \end{tikzpicture}
  \caption{Given two boxes in $\R^2$ each with sidelength $D$ and with corresponding DOF sets $\B_1$ and $\B_2$, in the strong admissibility setting the associated off-diagonal blocks $\idx{K}{\B_1}{\B_2}$ and $\idx{K}{\B_2}{\B_1}$ are assumed to be numerically low rank as long as the boxes are separated by a distance of at least $D$.  In contrast, in the weak admissibility setting the boxes need only be non-overlapping.\label{fig:admiss}}
\end{figure}

The $\mathcal{H}$-matrix literature classifies matrices with hierarchical block low-rank structure into two categories based on which off-diagonal blocks of the matrix are compressed.  Given a quadtree or octree data structure partitioning the domain into small boxes, let $\B_1$ and $\B_2$ be sets of DOFs corresponding to distinct boxes at the same level of the tree each with sidelength $D$.  For \emph{strongly-admissible} hierarchical matrices, the off-diagonal block $\idx{K}{\B_1}{\B_2}$ is compressed only if $\B_1$ and $\B_2$ are \emph{well-separated} as in the FMM --- that is, if $\B_1$ and $\B_2$ are separated by a distance of at least $D$ as in \cref{fig:admiss}.  In contrast, \emph{weakly-admissible} hierarchical matrices compress not only well-separated interactions but also interactions corresponding to DOFs in \emph{adjacent} boxes.  The inclusion of nearby interactions under weak admissibility typically increases the required approximation rank, but it also affords a much simpler geometric and algorithmic structure.

A number of more recent methods have been developed for hierarchically rank-structured matrices with the aim of more efficient practical performance based on weakly-admissible rank structure.  Examples include algorithms for hierarchical semi-separable (HSS) matrices \cite{fasthss,fastulv,superfast}, hierarchical off-diagonal low-rank (HODLR) matrices \cite{siva,mhodlr}, and methods based on recursive skeletonization (RS)  \cite{martinsson-rokhlin,domainsAd,rskel}, among other related schemes \cite{bremer,chen}\response{.}  In general, methods based strictly on \response{weak admissibility require} allowing ranks of off-diagonal blocks to grow non-negligibly with $N$ to attain a fixed target accuracy.
This has led to the development of more involved methods such as the hierarchical interpolative factorization (HIF) of Ho \& Ying \cite{hifie} and the method of Corona et al. \cite{corona2013}, which combine RS with additional compression steps based on geometric considerations to obtain greater efficiency at the cost of a more complicated algorithm.

There has been much less work on improved algorithms for solving \eqref{eq:linear} based directly on strong-admissibility.  The stand-out example is the recent ``inverse fast multipole method'' (IFMM) of \response{Coulier et al.} and Ambikasaran \& Darve \cite{ifmm1,ifmm2}, which assumes a general $\mathcal{H}^2$-matrix is given and provides a framework for approximately applying the inverse operator in the language of the FMM.  Further, a factorization based on block elimination and \response{strong admissibility} has been recently introduced by Sushnikova \& Oseledets \cite{oseledets} for the ``sparse analogue'' of our integral equation setting (that is, discretizations of elliptic partial differential equations).

\subsection{Contributions}
Based on the RS process of Martinsson \& Rokhlin \cite{martinsson-rokhlin} in the block-elimination form of Ho \& Ying \cite{hifie}, we introduce \emph{strong skeletonization}, an \response{extension} of skeletonization for the \response{strong admissibility} setting.  Using this in a recursive fashion like the original RS factorization, we develop the \emph{strong recursive skeletonization factorization}, an approximate factorization of $\m{K}$ into the product of many block unit-triangular matrices and a block diagonal matrix with time complexity linear in the number of DOFs, under suitable rank-scaling assumptions.  \response{Using low-accuracy approximations to off-diagonal blocks yields an effective preconditioner for iterative methods applied to \eqref{eq:linear}, whereas at higher accuracies the resulting factorization can be used as a direct solver.}

\response{Like the IFMM \cite[Appendix C]{ifmm2}, our factorization uses a bottom-up traversal of the quadtree or octree decomposition of space to compress well-separated interactions on a level-by-level basis.  This allows efficient on-the-fly construction of a nested ``skeleton'' basis for representing far-field interactions at different levels during the factorization process, in contrast to the typical recursive $\mathcal{H}$-matrix inversion algorithm.  Using skeletonization to maintain problem structure and exploit accelerated compression techniques (see \cref{sec:proxy}), we obtain what may be thought of as a multiplicative analogue of the FMM, using the same strong admissibility structure.  This gives a factorization of $\m{K}$ or $\m{K}^{-1}$ with simple constituent factors that is easy to understand and implement.}

\response{As an extension to our approach, we combine the original weak-admissibility-based skeletonization process with our strong-admissibility-based skeletonization and introduce the \emph{hybrid recursive skeletonization factorization}, which uses additional compression steps like HIF or the method of Corona et al. but does so without the need for spatial geometry beyond the boxes of the tree decomposition.  This additional compression reduces memory usage for practical performance gains in many cases.}

\section{Preliminaries}

In the remainder of this paper we adopt the following notation.  For a positive integer $N$, the index set $\{1,2,\dots,N\}$ is denoted by $[N]$.  We write matrices or matrix-valued functions in the sans serif font (\eg, $\m{A}\in\C^{N\times N}$) but make no such distinction for vectors (\eg, $x\in\C^N$).  Given a vector or matrix, the norms $\|x\|$ or $\|\m{A}\|$ refer to the standard Euclidean vector norm and corresponding induced matrix norm, respectively.  The math-calligraphic font is used to indicate index sets (\eg, $\I = \{i_1,i_2,\dots,i_r\}$ with each $i_j$ a positive integer) that we use to index blocks of a matrix (\eg, $\idx{A}{\I}{\J} = \m{A}(\I,\J) \in \C^{|\I|\times|\J|}$, using MATLAB\textsuperscript{\textregistered} notation).  Therefore, each index set has an implicit ordering, though we use the term ``set'' as opposed to \response{``vector''} to avoid conflation.  Because we are working with matrices discretizing integral equations, indices in an index set are typically associated with points in $\R^d$ (\eg, Nystr\"om or collocation points or centroids of elements).  As such, we will use the more general term ``DOF sets'' to refer to both the index set $\B$ and the corresponding points $\{x_i\}_{i\in\B}$ in $\R^d$.  Finally, to denote ordered sets of positive integers that are not associated with points in the domain nor used to index matrices we use the math-script font (\eg, $\L$).

\subsection{Block-structured elimination}\label{sec:block_elimination}
We begin with a brief review of block-structured elimination and its efficiency, which is central to the skeletonization algorithm.

Let $\m{A}\in\C^{N\times N}$ be an $N\times N$ matrix and suppose $[N] = \I\cup\J\cup\K$ is a partition of the index set of $\m{A}$ such that both $\idx{A}{\I}{\K} = 0$ and $\idx{A}{\K}{\I} = 0$, \ie, we have the block structure
\begin{align*}
\m{A} &= \left\lmat \begin{array}{l|l|l}
\idx{A}{\I}{\I}&\idx{A}{\I}{\J}& \\\hline
\idx{A}{\J}{\I}&\idx{A}{\J}{\J}&\idx{A}{\J}{\K} \\ \hline
&\idx{A}{\K}{\J}&\idx{A}{\K}{\K}\\
\end{array}\right\rmat,
\end{align*}
up to permutation.  Assuming that the block $\idx{A}{\I}{\I}$ is invertible, the DOFs $\I$ can be decoupled as follows.  First, define the matrices $\m{L}$ and $\m{U}$ as
\begin{align}\label{eq:landu}
\m{L} &\equiv \left\lmat \begin{array}{c|c|c}
\m{I}&& \\\hline
-\idx{A}{\J}{\I}^{\phantom{-1}}\idx{A}{\I}{\I}^{-1}&\m{I}& \\ \hline
&&\m{I}\\
\end{array}\right\rmat , \quad
\m{U} \equiv \left\lmat \begin{array}{c|c|c}
\m{I}&-\idx{A}{\I}{\I}^{-1}\idx{A}{\I}{\J}^{\phantom{-1}}& \\\hline
&\m{I}& \\ \hline
&&\m{I}\\
\end{array}\right\rmat .
\end{align}
with the same block partitioning as $\m{A}$.  Then, applying these operators on the left and right of $\m{A}$ yields
\begin{align}\label{eq:block_elimination}
\m{L}\m{A}\m{U} = \left\lmat \begin{array}{c|c|c}
\idx{A}{\I}{\I}&& \\\hline
&\idx{S}{\J}{\J}&\idx{A}{\J}{\K} \\ \hline
&\idx{A}{\K}{\J}&\idx{A}{\K}{\K}\\
\end{array}\right\rmat ,
\end{align}
where
$
\idx{S}{\J}{\J}=\idx{A}{\J}{\J} - \idx{A}{\J}{\I}^{\phantom{-1}}\idx{A}{\I}{\I}^{-1}\idx{A}{\I}{\J}^{\phantom{-1}}$
is the only nonzero block of the resulting matrix that has been modified.

We say that $\idx{S}{\J}{\J}$ is related to $\idx{A}{\J}{\J}$ through a \emph{Schur complement update}.  Note that, while we choose here to write block-elimination in its simplest form, in practice it can be numerically advantageous to work with a factorization of $\idx{A}{\I}{\I}$ as is done by Ho \& Ying \cite{hifie} as opposed to inverting the submatrix directly.  Either way, the cost of computing $\idx{S}{\J}{\J}$ is $\O(|\I|^3 + |\I|\cdot|\J|^2)$.

\subsection{Compression via the interpolative decomposition}
Another key linear algebra tool of which we will make heavy use is the \emph{interpolative decomposition} \cite{id}.
\begin{definition}\label{def:id}
  Given both a matrix $\idx{A}{\I}{\J}\in\C^{|\I|\times |\J|}$ with rows indexed by $\I$ and columns indexed by $\J$ and a tolerance $\epsilon > 0$, an $\epsilon$-accurate interpolative decomposition (ID) of $\idx{A}{\I}{\J}$ is a partitioning of $\J$ into DOF sets
  associated with so-called skeleton columns $\sknocolor\subset \J$ and redundant columns $\rdnocolor = \J \setminus \sknocolor$ and a corresponding interpolation matrix $\,\m{T}$ such that
\begin{align*}
\left\|\idx{A}{\I}{\rdnocolor}-\idx{A}{\I}{\sknocolor}\m{T}\right\| \le \epsilon\left\|\idx{A}{\I}{\J}\right\|,
\end{align*}
or equivalently, assuming $\idx{A}{\I}{\J} = \lmat\begin{array}{cc}\idx{A}{\I}{\rd} & \idx{A}{\I}{\sk} \end{array} \rmat$,
\begin{align*}
\left\|\idx{A}{\I}{\J} -\idx{A}{\I}{\sk}\lmat\begin{array}{cc}\m{T} & \m{I} \end{array}\rmat\right\| \le \epsilon\left\|\idx{A}{\I}{\J}\right\|.
\end{align*}
In other words, the redundant columns are approximated as a linear combination of the
skeleton columns to within the prescribed relative accuracy, leading to a low-rank factorization of $\idx{A}{\I}{\J}$.
\end{definition}

Note that, while the ID error bound can be attained trivially by taking $\sknocolor=\J$, it is desirable to keep $|\sknocolor|$ as small as possible. The typical algorithm to compute an ID uses a strong rank-revealing QR factorization as detailed by Gu \& Eisenstat\cite{rrqr}, though in practice a standard greedy column-pivoted QR tends to be sufficient.  In either case, the computational complexity is $\O(|\I|\cdot|\J|^2)$.

\section{The strong recursive skeletonization factorization (RS-S)}
\label{sec:rs}

We work in the context of a given tree decomposition of the domain (quadtree or octree) such that each leaf-level box of the tree contains a bounded number of DOFs independent of $N$.  When the tree is uniformly refined (\ie, each box has either $0$ or $2^d$ children and all leaf boxes are at the same level), it is straight-forward to use the strong admissibility criterion illustrated in \cref{fig:admiss} to identify which pairs of boxes are strongly-admissible and which are not, as in \cref{fig:neighbors}: given a box, the \emph{near-field} region for that box is the region corresponding to adjacent boxes at the same level of tree, and the \emph{far-field} region is the remainder of the domain.  The case where the tree is not uniformly refined is similar, though we will illustrate our method with uniform trees in what follows.

\begin{figure}
\centering
\scalebox{0.75}{
  \begin{tikzpicture}
    \clip[rounded corners](-3.5,-2.5) rectangle (4.5,3.5);
  \foreach \x in {-4,...,4}
      \foreach \y in {-3,...,3}
        \pgfmathifthenelse{\y<-1 || \y > 1 || \x < -1 || \x > 1}{"\noexpand\filldraw[fill=far,draw=black,thick,fill opacity=0.7] (\x,\y) rectangle (\x+1, \y+1);"}{}\pgfmathresult;
     \foreach \x in {-1,...,1}
      \foreach \y in {-1,...,1}
        \pgfmathifthenelse{\y<0 || \y > 0 || \x < 0 || \x > 0}{"\noexpand\filldraw[fill=near,draw=black,thick,fill opacity=0.7] (\x,\y) rectangle (\x+1, \y+1);"}{}\pgfmathresult;

        \draw[line width=2pt, draw=black] (-1,-1) rectangle (2,2);
  \filldraw[fill=box,draw=black,thick,fill opacity=0.7] (0,0) rectangle (1,1);
  \draw[line width=2pt, draw=black] (0,0) rectangle (1,1);
  \draw[use as bounding box,draw=black,line width=1mm, rounded corners](-3.5,-2.5) rectangle (4.5,3.5);
  \end{tikzpicture}
  }
\scalebox{0.75}{
  \begin{tikzpicture}
    \clip[rounded corners](-3.5,-2.5) rectangle (4.5,3.5);
\foreach \x in {-4,...,4}
      \foreach \y in {-3,...,3}
        \pgfmathifthenelse{\y<-2 || \y > 2 || \x < -2 || \x > 2}{"\noexpand\filldraw[fill=lightergray,draw=black,thick] (\x,\y) rectangle (\x+1, \y+1);"}{}\pgfmathresult;
      \foreach \x in {-2,...,2}
      \foreach \y in {-2,...,2}
        \pgfmathifthenelse{\y<-1 || \y > 1 || \x < -1 || \x > 1}{"\noexpand\filldraw[fill=far,draw=black,thick,fill opacity=0.7] (\x,\y) rectangle (\x+1, \y+1);"}{}\pgfmathresult;
     \foreach \x in {-1,...,1}
      \foreach \y in {-1,...,1}
        \pgfmathifthenelse{\y<0 || \y > 0 || \x < 0 || \x > 0}{"\noexpand\filldraw[fill=near,draw=black,thick,fill opacity=0.7] (\x,\y) rectangle (\x+1, \y+1);"}{}\pgfmathresult;
  \filldraw[fill=box,draw=black,thick,fill opacity=0.7] (0,0) rectangle (1,1);

          \draw[line width=2pt, draw=black] (-1,-1) rectangle (2,2);

                  \draw[line width=2pt, draw=black] (0,0) rectangle (1,1);
  \draw[dotted,line width = 1mm, draw=black] (0.5,0.5) circle (2.5);
  \draw[use as bounding box,draw=black,line width=1mm, rounded corners](-3.5,-2.5) rectangle (4.5,3.5);
  \end{tikzpicture}
  }

  \caption{\label{fig:neighbors} Considering the DOFs $\B$ interior to the brown box on the left, the near-field DOFs $\N$ are those interior to the blue boxes and the far-field DOFs $\F$ are those interior to the red boxes.  Note that not all boxes in the far-field are shown.  On the right, we draw a ``proxy surface'' $\Gamma$ (the dashed circle) around the DOFs $\B$ such that the far-field $\F$ can be further decomposed into DOFs belonging to boxes inside the proxy surface, $\notprox$ (still red) and DOFs belonging to boxes outside the proxy surface, $\prox$.  }
\end{figure}
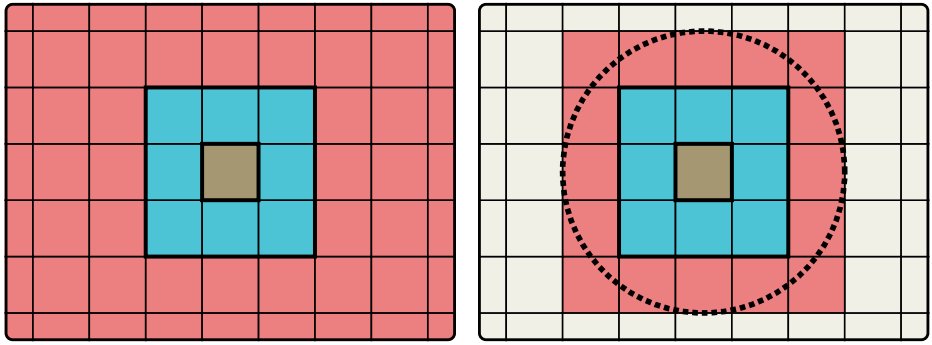

\subsection{Strong skeletonization}\label{sec:strongskel}
Given a matrix $\m{A}\in\C^{N\times N}$ indexed by points in our domain, we begin by selecting DOFs $\B$ corresponding to a leaf-level \response{box at} the finest level of the tree.

Letting $\N$ and $\F$ be the sets of near- and far-field DOFs as in \cref{fig:neighbors} (left), an appropriate permutation $\m{P}$ gives the block structure
\begin{align*}
\m{P}^*\m{A}\m{P} &= \left\lmat \begin{array}{l|l|l}
\idx{A}{\B}{\B}&\idx{A}{\B}{\N}&\idx{A}{\B}{\F} \\\hline
\idx{A}{\N}{\B}&\idx{A}{\N}{\N}&\idx{A}{\N}{\F} \\ \hline
\idx{A}{\F}{\B}&\idx{A}{\F}{\N}&\idx{A}{\F}{\F}\\
\end{array}\right\rmat.
\end{align*}
By assumption, the blocks $\idx{A}{\B}{\F}$ and $\idx{A}{\F}{\B}$ corresponding to the far-field interactions of $\B$ are numerically low-rank and thus compressible.  Given some tolerance $\epsilon$, we partition $\B$ into its redundant and skeleton DOFs $\B = \rd \cup \sk$ via the ID

\begin{align}\label{eq:skelid}
\left\lmat \begin{array}{c}
\idx{A}{\F}{\B}\\
\idx{A}{\B}{\F}^*
\end{array}\right\rmat &= \left\lmat \begin{array}{c c}
\idx{A}{\F}{\rd} & \idx{A}{\F}{\sk}\\
\idx{A}{\rd}{\F}^* & \idx{A}{\sk}{\F}^*
\end{array}\right\rmat \approx \left\lmat \begin{array}{c}
\idx{A}{\F}{\sk}\\
\idx{A}{\sk}{\F}^*
\end{array}\right\rmat \lmat\begin{array}{cc}\m{T} & \m{I}\end{array}\rmat,
\end{align}
which yields a skeleton set $\sk$ and interpolation matrix $\m{T}$ that can be used to represent both the columns of $\idx{A}{\F}{\B}$ and the rows of $\idx{A}{\B}{\F}$, \ie, $\idx{A}{\F}{\rd} \approx \idx{A}{\F}{\sk}\m{T}$ and $\idx{A}{\rd}{\F}\approx \m{T}^*\idx{A}{\sk}{\F}$.  Note that in \eqref{eq:skelid} we have assumed for clarity of exposition that the redundant DOFs $\rd$ are ordered first within $\B$ such that no further permutation is necessary.   We now partition blocks of $\m{P}^*\m{A}\m{P}$ according to this ID to obtain
\begin{align*}
\m{P}^*\m{A}\m{P}
&\approx \left\lmat \begin{array}{ll|l|l}
\idx{A}{\rd}{\rd}&\idx{A}{\rd}{\sk}&\idx{A}{\rd}{\N}&\m{T}^*\idx{A}{\sk}{\F} \\
\idx{A}{\sk}{\rd}&\idx{A}{\sk}{\sk}&\idx{A}{\sk}{\N}&\idx{A}{\sk}{\F} \\\hline
\idx{A}{\N}{\rd}&\idx{A}{\N}{\sk}&\idx{A}{\N}{\N}&\idx{A}{\N}{\F} \\ \hline
\idx{A}{\F}{\sk}\m{T}&\idx{A}{\F}{\sk}&\idx{A}{\F}{\N}&\idx{A}{\F}{\F}\\
\end{array}\right\rmat.
\end{align*}

Because of the explicit linear dependence between far-field blocks of the matrix, the redundant DOFs $\rd$ can now be decoupled from the far-field DOFs $\F$ using elementary block row and column operations.  Defining the elimination matrices
\begin{align*}
\m{U}_\m{T}\equiv\left\lmat \begin{array}{cc|c|c}
\m{I}&-\m{T}^*&&\\
&\m{I}&& \\\hline
&&\m{I}& \\ \hline
&&&\m{I} \\
\end{array}\right\rmat, \quad
\m{L}_\m{T} &\equiv\left\lmat \begin{array}{cc|c|c}
\m{I}&&&\\
-\m{T}&\m{I}&& \\\hline
&&\m{I}& \\ \hline
&&&\m{I} \\
\end{array}\right\rmat,
\end{align*}
we see that application of these operators on the left and right gives
\begin{align*}
\m{U}_\m{T}\left\lmat \begin{array}{ll|l|l}
\idx{A}{\rd}{\rd}&\idx{A}{\rd}{\sk}&\idx{A}{\rd}{\N}&\m{T}^*\idx{A}{\sk}{\F} \\
\idx{A}{\sk}{\rd}&\idx{A}{\sk}{\sk}&\idx{A}{\sk}{\N}&\idx{A}{\sk}{\F} \\\hline
\idx{A}{\N}{\rd}&\idx{A}{\N}{\sk}&\idx{A}{\N}{\N}&\idx{A}{\N}{\F} \\ \hline
\idx{A}{\F}{\sk}\m{T}&\idx{A}{\F}{\sk}&\idx{A}{\F}{\N}&\idx{A}{\F}{\F}\\
\end{array}\right\rmat \m{L}_\m{T} &\approx \left\lmat \begin{array}{ll|l|l}
\idx{X}{\rd}{\rd}&\idx{X}{\rd}{\sk}&\idx{X}{\rd}{\N}&\\
\idx{X}{\sk}{\rd}&\idx{A}{\sk}{\sk}&\idx{A}{\sk}{\N}&\idx{A}{\sk}{\F} \\\hline
\idx{X}{\N}{\rd}&\idx{A}{\N}{\sk}&\idx{A}{\N}{\N}&\idx{A}{\N}{\F} \\ \hline
&\idx{A}{\F}{\sk}&\idx{A}{\F}{\N}&\idx{A}{\F}{\F}\\
\end{array}\right\rmat,
\end{align*}
where the modified \response{nonzero} blocks marked with $\m{X}$ correspond to some mixing of the second row and column\response{, respectively,} with the first row and column as a consequence of the elimination.

\response{Using $\idx{X}{\rd}{\rd}$ as a pivot block to eliminate the other blocks in the first row and column (\ie, performing block elimination as in \cref{sec:block_elimination} with $\I = \rd$, $\J=\sk\cup\N$, and $\K = \F$) we define the corresponding matrices $\m{L}$ and $\m{U}$ as in \eqref{eq:landu} to obtain}
\begin{align}\label{eq:zab}
\m{L}\m{U}_\m{T}\m{P}^*\m{A}\m{P}\m{L}_\m{T}\m{U}&\approx\left\lmat \begin{array}{ll|l|l}
\idx{X}{\rd}{\rd}&&&\\
&\idx{X}{\sk}{\sk}&\idx{X}{\sk}{\N}&\idx{A}{\sk}{\F} \\\hline
&\idx{X}{\N}{\sk}&\idx{X}{\N}{\N}&\idx{A}{\N}{\F} \\ \hline
&\idx{A}{\F}{\sk}&\idx{A}{\F}{\N}&\idx{A}{\F}{\F}\\
\end{array}\right\rmat \equiv \SF{\m{A}}{\B},
\end{align}
whereupon we see that the redundant DOFs $\rd$ are now completely decoupled from the rest of the problem.

We refer to this process as \emph{strong skeletonization} of $\m{A}$ with respect to the DOFs $\B$, as it is a direct modification of the skeletonization procedure of Martinsson \& Rokhlin \cite{martinsson-rokhlin} for the strong admissibility setting using the multiplicative formulation of Ho \& Ying \cite{hifie}.  We note that, while ID-based compression of far-field interactions has been used in the context of kernel-independent FMMs \cite{mrfmm,pan}, its use for the construction of (approximate) direct solvers is novel.

For a purely notational convenience, we will define the left and right skeletonization operators $\m{V}$ and $\m{W}$ as
\begin{align}\label{eq:skelop}
\m{V} \equiv \m{P}\m{U}^{-1}_\m{T}\m{L}^{-1},\quad \m{W} \equiv \m{U}^{-1}\m{L}^{-1}_\m{T}\m{P}^*,
\end{align}
with the understanding that these matrices will always be stored and used in the factored form given for efficiency.  In particular, recall that the block unit-triangular matrices $\m{U}$, $\m{U}_\m{T}$ and so on may be inverted by toggling the sign of the nonzero off-diagonal block.
With this shorthand we obtain
\begin{align}\label{eq:zabcompact}
\SF{\m{A}}{\B}\approx \m{V}^{-1}\m{A}\m{W}^{-1},
\end{align}
a more compact representation of $\SF{\m{A}}{\B}$ in \eqref{eq:zab}.

\subsection{The use of a proxy surface}\label{sec:proxy}
For optimal \response{complexity} it is desirable to avoid computation with blocks indexed by $\F$ in the construction of $\SF{\m{A}}{\B}$, since $|\F|$ is in general large.   By design, the only part of computing $\SF{\m{A}}{\B}$ that involves blocks indexed by $\F$ is constructing the ID, that is, finding the \response{partition} $\B=\rd\cup\sk$ and the interpolation matrix $\m{T}$ in \eqref{eq:skelid}.  For simplicity, we will drop the block $\idx{A}{\B}{\F}^*$ in this section and explain how the subblock $\idx{A}{\F}{\B}$ can be compressed in an indirect way that is more efficient than operating on $\idx{A}{\F}{\B}$ directly.  The \response{``transpose''} of these ideas can be used for the full stacked matrix including $\idx{A}{\B}{\F}^*$.

For concreteness, consider the case where $\m{A}\equiv\m{K}$ is a discretization such as \eqref{eq:discbie} with $b(x) \equiv c(x)\equiv 1$ and the 2D Laplace kernel $\response{K(z)=-\frac{1}{2\pi}\log(\|z\|)}$.  In this case, entries of $\idx{A}{\F}{\B}$ are given (up to a factor of $1/N$, which we drop in our discussion) directly by $K(x_i - x_j)$ for $x_i\in\F$ and $x_j \in \B$.

 Suppose $\Gamma$ is a circle (or sphere in 3D) enclosing the DOF set $\B$ with radius normalized to a multiple of 5/2 the sidelength of the corresponding box.  As in \cref{fig:neighbors} (right), we partition the far-field DOFs of $\B$ as $\F=\notprox\cup\prox,$ where
\begin{align*}
\prox\equiv \{i\in\F | \text{ $i$ is contained in a box entirely outside of $\Gamma$}\}
\end{align*}
and $\notprox \equiv \F\setminus\prox$, \ie, $\notprox$ is the set of indices corresponding to the \response{square of boxes containing the proxy surface} in the figure.

Define $\phi_i(x) \equiv K(x_i - x)$ for $x_i\in\prox$ such that $\phi_i(x_j)$ is the ``incoming'' harmonic field generated at $x_j\in\B$ by a source at $x_i$.   Because the DOFs in $\B$ are contained in the closed region with boundary $\Gamma$ and the DOFs in $\prox$ are contained in the complementary region, we may (under mild assumptions \cite{mclean}) use a form of Green's identity to write $\phi_i(x_j)$ for any $x_j\in\B$ in terms of a density $\psi_i(y)$ on $\Gamma$ as
\begin{align}\label{eq:greens}
\phi_i(x_j)&= \int_\Gamma \psi_i(y)K(x_j-y)\,dy,
\end{align}
where $\psi_i(y)$ depends on $x_i$ but not on $x_j$.

Because $\phi_i(x_j) = K(x_i - x_j)$ for $x_i\in\prox\subset\F$ and $x_j\in\B$, the left-hand side of \eqref{eq:greens} is an entry of $\idx{A}{\F}{\B}$.  The right-hand side is a so-called ``single-layer'' representation of this entry.  Discretizing this representation by replacing the analytic integral over $\Gamma$ with numerical integration using $n_p$ points $y_1,\dots,y_{n_p}$, we see that, up to discretization error,
\begin{align}\label{eq:proxytrick}
\idx{A}{\F}{\B} = \left\lmat \begin{array}{c}\idx{A}{\notprox}{\B} \\ \idx{A}{\prox}{\B} \end{array}\right\rmat \approx \left\lmat\begin{array}{cc} \m{I} & \\ & \idx{M}{\prox}{\Gamma} \end{array}\right\rmat\left\lmat \begin{array}{c}\idx{A}{\notprox}{\B} \\ \idx{G}{\Gamma}{\B} \end{array}\right\rmat,
\end{align}
where $\idx{G}{\Gamma}{\B}$ has entries $K(x_j-y_i)$ for $x_j\in\B$ and $\idx{M}{\prox}{\Gamma}$ is the matrix that approximately maps $\idx{G}{\Gamma}{\B}$ to $\idx{A}{\prox}{\B}$ via a discretization of \eqref{eq:greens} for each $x_i\in\prox$.  We do not give an explicit form of $\idx{M}{\prox}{\Gamma}$, as we need only be assured of its existence for what follows.

The \emph{proxy trick} for accelerated compression, which is heavily employed in the literature \cite{martinsson-rokhlin,id,domainsAd,corona2013,gg,rskel,hifie,pan,kifmm,mrfmm}, makes use of two key observations regarding \eqref{eq:proxytrick}.  Firstly, if $n_p \ll |\prox|$ (for example, if we take $n_p=\O(1)$ and $N$ to be large), then it is relatively inexpensive to compute the ID
\begin{align}\label{eq:smallid}
\left\lmat \begin{array}{c}\idx{A}{\notprox}{\B} \\ \idx{G}{\Gamma}{\B} \end{array}\right\rmat &= \left\lmat \begin{array}{cc}\idx{A}{\notprox}{\rd} & \idx{A}{\notprox}{\sk} \\ \idx{G}{\Gamma}{\rd} & \idx{G}{\Gamma}{\sk}\end{array}\right\rmat \approx \left\lmat \begin{array}{c}\idx{A}{\notprox}{\sk} \\ \idx{G}{\Gamma}{\sk} \end{array}\right\rmat\left\lmat \begin{array}{cc}\m{T} & \m{I} \end{array}\right\rmat.
\end{align}
Furthermore, because $\Gamma$ is in the far-field of $\B$, $|\sk|$ should be small by assumption.
Secondly, using the discrete Green's identity represented by $\idx{M}{\prox}{\Gamma}$, combining $\eqref{eq:proxytrick}$ and \eqref{eq:smallid} yields
\begin{align*}
\idx{A}{\F}{\B} \approx \left\lmat\begin{array}{cc} \m{I} & \\ & \idx{M}{\prox}{\Gamma} \end{array}\right\rmat\left\lmat \begin{array}{c}\idx{A}{\notprox}{\sk} \\ \idx{G}{\Gamma}{\sk} \end{array}\right\rmat\left\lmat \begin{array}{cc}\m{T} & \m{I} \end{array}\right\rmat \approx \idx{A}{\F}{\sk}\left\lmat\begin{array}{cc} \m{T} & \m{I}\end{array}\right\rmat,
\end{align*}
\ie, the partitioning $\B = \rd\cup\sk$ and interpolation matrix $\m{T}$ in \eqref{eq:smallid} also give an ID of $\idx{A}{\F}{\B}$.  We caution that the amplification of the approximation error in this ID depends on $\|\idx{M}{\prox}{\Gamma}\|$, among other factors, but note that this does not appear to be an issue in practice (see \cref{sec:results}).

By using these ideas or modifications thereof to obtain the ID \eqref{eq:skelid} as opposed to using a rank-revealing QR on the far-field blocks directly, the complexity of compression now depends on the number of proxy points $n_p$ and not on the total number of points in the domain and is thus substantially reduced.  To apply this acceleration, we require only a way to evaluate kernel interactions with $\Gamma$ and that the entries of $\idx{A}{\prox}{\B}$ satisfy a Green's identity.  While we have discussed only the Laplace kernel, the same trick holds for, \eg, the Stokes, or elasticity kernel.  A more thorough discussion of the use of a proxy surface can be found in Ho \& Ying\cite{hifie}, which discusses the case of more general $a(x)$ and $b(x)$ in \eqref{eq:bie}.

\subsection{Algorithm and complexity}
We turn now to a 1D sketch of our factorization approach using strong skeletonization.  Suppose we wish to solve an integral equation over the unit line segment $\Omega = [0,1]$ \response{using the trapezoid rule} to construct the matrix $\m{K}$.  Partitioning $\Omega$ into eight subintervals with corresponding DOF sets $\B_i$ for \response{$i=1,\dots,8$}, we consider the first DOF set $\B_1$ and its corresponding near-field DOFs $\N_1$ and far-field DOFs $\F_1$.  This gives a block partitioning and labeling of $\response{\m{K}}$ as in \cref{fig:1d} (top-left).  We use the strong skeletonization algorithm of \cref{sec:strongskel} to decouple redundant DOFs in $\B_1$ to approximately obtain the new matrix
\begin{align}\label{eq:skel}
\SF{\m{K}}{\B_1} =\left\lmat \begin{array}{ll|l|l}
\idx{X}{\rd_1}{\rd_1}&&&\\
&\idx{X}{\sk_1}{\sk_1}&\idx{X}{\sk_1}{\N_1}&\idx{K}{\sk_1}{\F_1} \\\hline
&\idx{X}{\N_1}{\sk_1}&\idx{X}{\N_1}{\N_1}&\idx{K}{\N_1}{\F_1} \\ \hline
&\idx{K}{\F_1}{\sk_1}&\idx{K}{\F_1}{\N_1}&\idx{K}{\F_1}{\F_1}\\
\end{array}\right\rmat
\end{align}
as in \eqref{eq:zabcompact}.
We call the redundant DOFs $\rd_1$ \emph{inactive}, because they no longer play an active role in our factorization procedure.  In contrast, any DOFs that have not yet been decoupled will be referred to as \emph{active}.

Moving on to the next set of DOFs $\B_2$ with near-field DOFs $\N_2$ and far-field DOFs $\F_2$, we observe that most of the nonzero entries of the matrix $\SF{\m{K}}{\B_1}$ are unchanged from their original value in $\response{\m{K}}$, see \cref{fig:1d} (top-right).  It is therefore reasonable to perform strong skeletonization of this new matrix with respect to $\B_2$ and expect compression of the corresponding far-field interactions.  This renders the redundant DOFs $\rd_2$ inactive and yields the new matrix
\begin{align*}
\SF{\m{K}}{\B_1,\B_2} \equiv \SF{\SF{\m{K}}{\B_1}}{\B_2},
\end{align*}
where we define $\SF{\m{K}}{\I_1,\I_2,\dots,\I_k}$ to be the result of skeletonizing $\m{K}$ with respect to the DOFs $\I_1$, skeletonizing the result with respect to $\I_2$, and so on.
In successive steps of strong skeletonization we skeletonize $\SF{\m{K}}{\B_1,\B_2}$ with respect to the DOFs $\B_3$ through $\B_8$ as in \cref{fig:1d} (bottom).

\begin{figure}[h!]
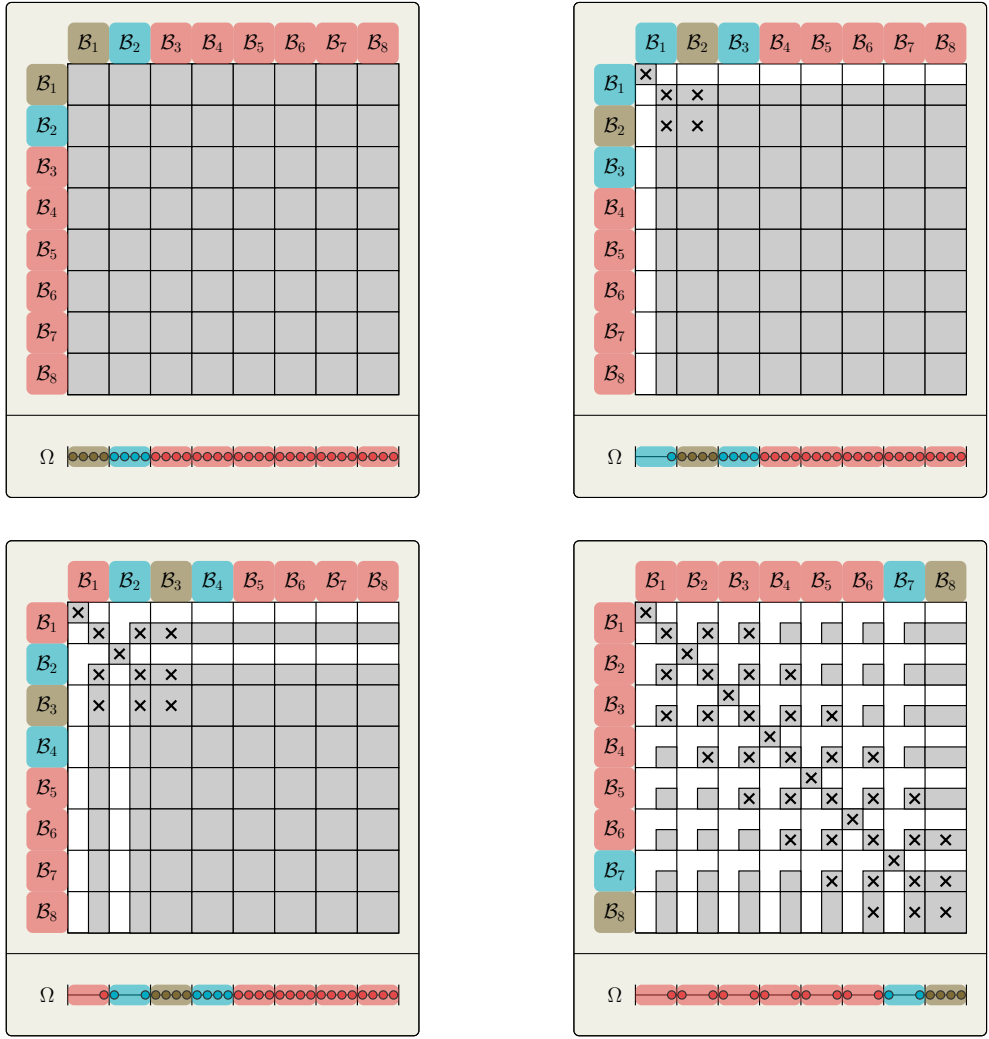

\centering
\begin{minipage}[b]{0.45\linewidth}\centering
\mypic{0}
\vspace*{1.5em}
\mypic{2}
\end{minipage}
\begin{minipage}[b]{0.45\linewidth}\centering
\mypic{1}
\vspace*{1.5em}
\mypic{7}
\end{minipage}
  \caption{We partition the 1D domain $\Omega$ into eight subintervals and block the corresponding matrix $\m{K}$ accordingly. Identifying the DOFs $\B_1$ (brown), $\N_1$ (blue), and $\F_1$ (red) in the top-left figure, we skeletonize with respect to $\B_1$.  Using the same color scheme for box, near-field, and far-field DOFs in the top-right figure, we see that some redundant DOFs have been completely decoupled from the rest and some blocks of the matrix have been modified through Schur complement updates (blocks marked with ``X'').  We proceed to skeletonize with respect to the DOFs $\B_2$, and then $\B_3$ (bottom-left figure) all the way through to $\B_8$ (bottom-right figure).  Below each matrix, we show \response{only} the remaining active DOFs at that step, \ie, we do not show the redundant DOFs corresponding to decoupled diagonal blocks.\label{fig:1d}}
\end{figure}

After skeletonization with respect to $\B_8$, the matrix $\SF{\m{K}}{\B_1,\B_2,\dots,\B_8}$ has many diagonal blocks corresponding to completely decoupled redundant DOF sets $\rd_i$ for $\response{i=1,\dots,8}$ as well as many blocks corresponding to interactions between the remaining active skeleton DOF sets $\sk_i$.  We construct a new partitioning of $\Omega$ into 4 subintervals and define the DOF sets
\begin{align}\label{eq:newDOFs}
\B_9 \equiv \sk_1\cup\sk_2, \quad \B_{10}\equiv\sk_3\cup\sk_4, \quad \B_{11} \equiv \sk_5\cup\sk_6, \quad\B_{12} \equiv \sk_7\cup\sk_8.
\end{align}
Permuting $\SF{\m{K}}{\B_1,\B_2,\dots,\B_8}$ such that these DOF sets are contiguous with the inactive DOF sets $\rd_i$ for \response{$i=1,\dots,8$} permuted to the end for visualization purposes, we obtain a matrix as in \cref{fig:1dlevel2} (left), at which point, we may skeletonize successively with respect to $\B_9$ through $\B_{12}$.  It is not possible to again double the size of a subinterval and expose any further compressible blocks, so we stop.  The final matrix
$\SF{\m{K}}{\B_1,\B_2,\dots,\B_{12}}$ can be permuted to a block-diagonal matrix with small blocks defined by the DOF sets $\rd_i$ for \response{$i=1,\dots,12$} and the DOF set $\sk_9\cup\sk_{10}\cup\sk_{11}\cup\sk_{12}$.  This block-diagonal structure can then be exploited to efficiently solve the linear system \eqref{eq:linear}.

\begin{figure}[h!]
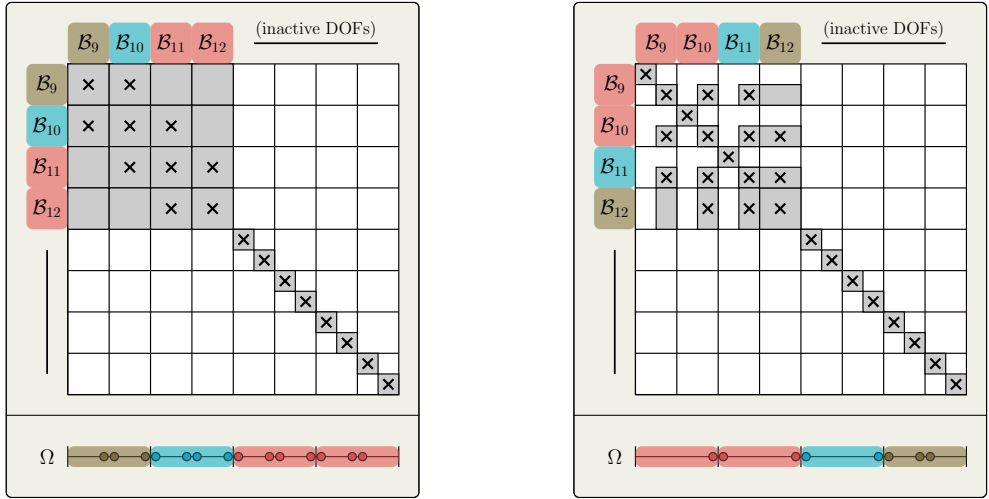

\centering
\begin{minipage}[b]{0.45\linewidth}\centering
\mypicnext
\end{minipage}
\begin{minipage}[b]{0.45\linewidth}\centering
\mypicnextnext
\end{minipage}
  \caption{After skeletonization with respect to the DOFs $\B_8$ in \cref{fig:1d}, we permute all decoupled redundant DOF \response{sets} $\rd_1,\dots,\rd_8$ to the end and define the next-level DOF sets $\B_9,\dots,\B_{12}$ as in \eqref{eq:newDOFs} such that blocks of the matrix in the left figure correspond to aggregating blocks from the previous level.  Using the same color scheme as in \cref{fig:1d}, in skeletonizing with respect to $\B_9$, we see that blocks of interactions between $\B_9$ (brown) and $\N_9$ (blue) have modifications from Schur complement updates (marked with ``X'') from the previous level.  We continue to skeletonize with respect to DOF sets at this level up through to $\B_{12}$ (right figure), decoupling additional redundant DOFs as we go.\label{fig:1dlevel2}}
\end{figure}

\subsubsection{The general case: first level}\label{sec:leaf}
Having given the flavor of our approach in 1D, we flesh out the details for the more general case.  Suppose the integral equation \eqref{eq:bie} is discretized over $\Omega\subset \R^d$ for $d=2$ or $3$.  Given a tree decomposition of the domain such that each leaf box contains a constant number of unknowns, we number the levels of \response{the} tree starting from the finest level $(\ell = 1)$ up to the root level $(\ell = L)$.  We require a fixed but arbitrary \response{bottom-up level-by-level traversal} of the tree and order the boxes accordingly such that a box at level $1$ is ordered before any box at level $2$ and so on. This ordering on all boxes of the tree induces corresponding orderings on the boxes within each level of the tree,  $\L_\ell$ for $\ell = 1,\dots,L$. For example, in the case of a regular grid with $2^{d(L-\ell)}$ boxes at level $\ell$ we obtain the orderings
\begin{align*}
\L_1 &= \left\{1,\, 2,\, \dots,\, 2^{d(L-1)}\right\}, \\
\L_2 &= \left\{2^{d(L-1)}+1,\, 2^{d(L-1)}+2,\, \dots,\, 2^{d(L-1)}+2^{d(L-2)}\right\},
\end{align*}
and so on.  \response{We do not require a regular grid of discretization points, but use a regular grid in all figures for illustration.  Note in particular that this implies \response{leaf boxes} may belong to levels other than $\ell=1$ in the general case.}

Beginning at level $\ell=1$, we select the first leaf box and label the corresponding DOFs as $\B_1$ with near-field DOFs $\N_1$ and far-field DOFs $\F_1$.  We decouple and render inactive redundant DOFs in $\B_1$ through strong skeletonization to obtain
\begin{align*}
\m{V}_1^{-1}\m{K}\m{W}_1^{-1} \approx \SF{\m{K}}{\B_1}
\end{align*}
with $\SF{\m{K}}{\B_1}$ as in \eqref{eq:skel} and $\m{V}_1$ and $\m{W}_1$ the left and right skeletonization operators corresponding to $\B_1$ as in \eqref{eq:skelop}.

Selecting the next box with corresponding DOFs $\B_2$, we define the DOF sets $\N_2$ and $\F_2$ as
\begin{align*}
\N_2 &\equiv \{\text{\emph{active} DOFs in the near-field of $\B_2$}\},\\
\F_2 &\equiv \{\text{\emph{active} DOFs in the far-field of $\B_2$}\},
\end{align*}
so as to avoid unnecessary further computation with the inactive DOFs indexing the first block row and column of $\SF{\m{K}}{\B_1}$.
To efficiently perform strong skeletonization of $\m{A}=\SF{\m{K}}{\B_1}$ with respect to the DOFs $\B_2$, it is necessary to assume that the blocks $\idx{A}{\B_2}{\F_2}$ and $\idx{A}{\F_2}{\B_2}$ are still compressible.  Note that this is not immediate, as \emph{these blocks need not be original blocks of $\m{K}$}.  In particular, if $\B_2\subset\N_1$ and $\F_2\cap\N_1 \ne \emptyset$, then the block $\idx{X}{\N_1}{\N_1}$ in \eqref{eq:skel} includes updated interactions between $\B_2$ and $\F_2\cap\N_1$, which has the potential to increase the numerical rank of said interactions.  \response{This is illustrated in \cref{fig:schurupdates}.}

Note that, drawing the proxy surface $\Gamma$ around $\B_2$, it is still true due to geometric considerations that the interactions $\idx{A}{\prox_2}{\B_2}$ between $\B_2$ and far-field points $\prox_2$ contained in boxes outside of $\Gamma$ are unchanged and, therefore, the accelerated compression scheme in \cref{sec:proxy} using a proxy surface is still justified.

\begin{figure}
\centering
\scalebox{0.75}{
  \begin{tikzpicture}
    \clip[rounded corners](-3.5,-2.5) rectangle (4.5,3.5);
\foreach \x in {-4,...,4}
      \foreach \y in {-3,...,3}
        \pgfmathifthenelse{\y<-2 || \y > 2 || \x < -2 || \x > 2}{"\noexpand\filldraw[fill=lightergray,draw=black,thick] (\x,\y) rectangle (\x+1, \y+1);"}{}\pgfmathresult;
      \foreach \x in {-2,...,2}
      \foreach \y in {-2,...,2}
        \pgfmathifthenelse{\y<-1 || \y > 1 || \x < -1 || \x > 1}{"\noexpand\filldraw[fill=far,draw=black,thick,fill opacity=0.7] (\x,\y) rectangle (\x+1, \y+1);"}{}\pgfmathresult;
     \foreach \x in {-1,...,1}
      \foreach \y in {-1,...,1}
        \pgfmathifthenelse{\y<0 || \y > 0 || \x < 0 || \x > 0}{"\noexpand\filldraw[fill=near,draw=black,thick,fill opacity=0.7] (\x,\y) rectangle (\x+1, \y+1);"}{}\pgfmathresult;
  \filldraw[fill=box,draw=black,thick,fill opacity=0.7] (0,0) rectangle (1,1);

          \draw[line width=2pt, draw=black] (-1,-1) rectangle (2,2);

                  \draw[line width=2pt, draw=black] (0,0) rectangle (1,1);
  \draw[dotted,line width = 1mm, draw=black] (0.5,0.5) circle (2.5);
  \draw (0.5,0.5) node {\Large $\B_1$};
  \draw (-0.5,-0.5) node {\Large $\B_2$};
  \draw (-0.5,1.5) node {\Large $\B_3$};
  \draw[use as bounding box,draw=black,line width=1mm, rounded corners](-3.5,-2.5) rectangle (4.5,3.5);
  \end{tikzpicture}
  }
\scalebox{0.75}{
  \begin{tikzpicture}
    \clip[rounded corners](-2.5,-1.5) rectangle (5.5,4.5);
\foreach \x in {-4,...,5}
      \foreach \y in {-3,...,4}
        \pgfmathifthenelse{\y<-2 || \y > 2 || \x < -2 || \x > 2}{"\noexpand\filldraw[fill=lightergray,draw=black,thick] (\x,\y) rectangle (\x+1, \y+1);"}{}\pgfmathresult;
      \foreach \x in {-2,...,2}
      \foreach \y in {-2,...,2}
        \pgfmathifthenelse{\y<-1 || \y > 1 || \x < -1 || \x > 1}{"\noexpand\filldraw[fill=far,draw=black,thick,fill opacity=0.7] (\x,\y) rectangle (\x+1, \y+1);"}{}\pgfmathresult;
     \foreach \x in {-1,...,1}
      \foreach \y in {-1,...,1}
        \pgfmathifthenelse{\y<0 || \y > 0 || \x < 0 || \x > 0}{"\noexpand\filldraw[fill=near,draw=black,thick,fill opacity=0.7] (\x,\y) rectangle (\x+1, \y+1);"}{}\pgfmathresult;
  \filldraw[fill=box,draw=black,thick,fill opacity=0.7] (0,0) rectangle (1,1);

          \draw[line width=2pt, draw=black] (-1,-1) rectangle (2,2);

                  \draw[line width=2pt, draw=black] (0,0) rectangle (1,1);
  \draw[dotted,line width = 1mm, draw=black] (0.5,0.5) circle (2.5);

  \draw (1.5,1.5) node {\Large $\sk_1$};
  \draw (0.5,0.5) node {\Large $\B_2$};
  \draw (0.5,2.5) node {\Large $\B_3$};

  \draw[use as bounding box,draw=black,line width=1mm, rounded corners](-2.5,-1.5) rectangle (5.5,4.5);
  \end{tikzpicture}
  }

  \caption{\label{fig:schurupdates} \response{Left: Because the DOF sets $\B_2$ and $\B_3$ are both in the near-field of $\B_1$, skeletonization with respect to $\B_1$ leads to updated interactions between $\B_2$ and $\B_3$ as a subblock of $\idx{X}{\N_1}{\N_1}$ in \eqref{eq:skel}.  Right: Considering now the next box, we see the DOF set $\B_3$ is in the far-field of $\B_2$, so the previous Schur complement update has led to modified far-field interactions that must be compressed when skeletonizing with respect to $\B_2$.  However, interactions between $\B_2$ and DOFs corresponding to boxes outside the proxy surface $\Gamma$ are still unmodified due to geometric considerations as guaranteed by \cref{thm:farprox}. }}
\end{figure}

\begin{theorem}\label{thm:farprox}
Skeletonization with respect to the DOFs $\B_i$ does not modify interactions between $\B_j$ and $\prox_j$ for $j\ge i$.  That is, if
$\m{A} = \SF{\m{K}}{\B_1,\B_2,\dots,\B_i},$
then
\begin{align*}
\left\lmat\begin{array}{c}\idx{A}{\prox_j}{\B_j} \\ \idx{A}{\B_j}{\prox_j}^*\end{array}\right\rmat &= \left\lmat\begin{array}{c}\idx{K}{\prox_j}{\B_j} \\ \idx{K}{\B_j}{\prox_j}^*\end{array}\right\rmat.
\end{align*}
\end{theorem}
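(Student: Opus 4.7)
My plan is a simple induction on the index $i$, built on two ingredients: a local analysis of which matrix entries a single skeletonization step can touch, and a geometric lemma about the proxy sphere.

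For the matrix analysis, I would inspect the block form in \eqref{eq:zab}. After strong skeletonization with respect to $\B_i = \rd_i \cup \sk_i$, the blocks $\idx{A}{\sk_i}{\F_i}$, $\idx{A}{\N_i}{\F_i}$, $\idx{A}{\F_i}{\F_i}$ and their transposes coincide exactly with their pre-step values, while every other block either becomes an updated $\idx{X}{\cdot}{\cdot}$ block (when both row and column indices lie in $\B_i \cup \N_i$) or is decoupled to zero (when one of the indices lies in $\rd_i$). Consequently, if $r$ and $c$ are both active at step $i$, the entry $(r,c)$ changes only when $r \in \B_i \cup \N_i$ and $c \in \B_i \cup \N_i$. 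Since every index in $\B_j \cup \prox_j$ is active at step $j$ and hence throughout all earlier steps, this characterization applies uniformly for every $i \le j$.

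The geometric claim to be proved is: for $j \ge i$, no $b \in \B_j$ and $p \in \prox_j$ can simultaneously lie in $\B_i \cup \N_i$. The case $i = j$ is immediate since $\prox_j \subseteq \F_j$ by definition. For $i < j$, the bottom-up, level-by-level traversal forces $\ell_i \le \ell_j$, and dyadic refinement gives $s_i \le s_j$ (with strict doubling when $\ell_i < \ell_j$). If $\B_j$ is spatially disjoint from $\B_i \cup \N_i$ there is no candidate $b$ at all. Otherwise I would use a $b \in \B_j \cap (\B_i \cup \N_i)$ as a witness that the $3 \times 3$ (or $3 \times 3 \times 3$) patch $\B_i \cup \N_i$ sits close to $\B_j$: its diameter is of order $\sqrt d\, s_i$ and it meets a cell of side $s_j$ at a point of $\B_j$, so every one of its points lies strictly inside the proxy sphere $\Gamma_j$ of radius $\tfrac{5}{2} s_j$ about the center of $\B_j$. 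Since each box contributing to $\prox_j$ lies entirely outside $\Gamma_j$, no $p \in \prox_j$ can belong to $\B_i \cup \N_i$.

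Combining the two ingredients by induction on $i$ (base case $i = 0$ where $\m{A} = \m{K}$) yields the theorem: at each step the matrix observation rules out modification of any entry of $\idx{A}{\prox_j}{\B_j}$ or $\idx{A}{\B_j}{\prox_j}$ that is not covered by the geometric lemma, and the lemma shows the latter class is empty. I expect the main technical obstacle to be the borderline case $\ell_i = \ell_j$ with $\B_j$ being a near-neighbor of $\B_i$, where the distance from $\B_j$'s center to the furthest point of $\B_i \cup \N_i$ is largest; this is exactly the configuration that pins down the constant $\tfrac{5}{2}$ in the definition of $\Gamma$ and that would require an explicit distance bound, especially in $d = 3$ where the inequality is tightest.
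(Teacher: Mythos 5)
Your matrix-analytical claim---that a strong-skeletonization step with respect to $\B_i$ changes only those active interactions whose row \emph{and} column indices both lie in $\sk_i \cup \N_i \subset \B_i \cup \N_i$---is correct and is exactly the observation the paper reads off from \eqref{eq:skel}. The geometric lemma you reduce to (no $b\in\B_j$ and $p\in\prox_j$ can simultaneously lie in $\B_i\cup\N_i$) is also exactly what needs to be shown. The problem is the argument you give for that lemma.

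You assert that if the patch $\B_i\cup\N_i$ meets $\B_j$, then ``every one of its points lies strictly inside the proxy sphere $\Gamma_j$ of radius $\tfrac52 s_j$.'' This is false, and not merely in a borderline sense. Take $\ell_i=\ell_j$ and let $\B_i$ be a corner neighbor of $\B_j$; in 2D with $\B_j=[0,1]^2$ and $\B_i=[1,2]^2$ one has $\B_i\cup\N_i=[0,3]^2$, whose corner $(3,3)$ sits at distance $\tfrac52\sqrt2>\tfrac52$ from the center of $\B_j$, hence outside $\Gamma_j$. The diameter bound $3\sqrt d\,s_i + \tfrac{\sqrt d}{2}s_j < \tfrac52 s_j$ that your argument implicitly needs fails for every $d\ge1$ whenever $s_i=s_j$, and in $d=3$ it fails even one level finer. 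So this is not a constant you can tighten: the patch genuinely can, and routinely does, protrude beyond $\Gamma_j$. What saves the theorem is that the protruding part lands in boxes that merely \emph{intersect} $\Gamma_j$ (hence belong to $\notprox_j$), not in boxes that are \emph{entirely outside} $\Gamma_j$ (which is what defines $\prox_j$). The paper's proof exploits exactly this by working with box-separation counts rather than Euclidean radii: since $j\ge i$ with the bottom-up ordering implies $D=s_i\le s_j=D'$, the patch $\B_i\cup\N_i$ (an axis-aligned $3\times\dots\times3$ cluster of level-$\ell_i$ boxes) can extend at most two level-$\ell_j$ boxes away from $\B_j$ once it touches $\B_j$, while every box contributing to $\prox_j$ is separated from $\B_j$ by at least two level-$\ell_j$ boxes. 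Your proof needs to replace the Euclidean-diameter/containment-in-$\Gamma_j$ step with this box-counting step; as written, that step is a gap that cannot be patched by a sharper distance estimate.
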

\begin{proof}
Suppose $D$ is the sidelength of the box with corresponding DOFs $\B_i$ and consider skeletonizing $\m{A}$ with respect to $\B_i$. As in \eqref{eq:skel}, we see the only \response{updated interactions between active DOFs} are between $\sk_i$ and $\N_i$.  However, by definition of $\prox_j$ we know that $\B_j$ and $\prox_j$ correspond to DOF sets separated by at least two boxes of sidelength $D'\ge D$ due to the fact that $j\ge i$ and our ordering corresponds to a \response{bottom-up} traversal of the tree.   Therefore, since the near-field DOFs $\N_i$ span a distance of no more than $3D$ in each axial direction, either $\prox_j\cap(\sk_i\cup\N_i) = \emptyset$ or $\B_j\cap(\sk_i\cup\N_i)=\emptyset$, which implies skeletonization with respect to $\B_i$ did not modify interactions between $\B_j$ and $\prox_j$.\qquad
\end{proof}

With the above in mind, we skeletonize $\m{A}$ with respect to $\B_2$ and obtain
\begin{align*}
\m{V}_2^{-1}\m{A}\m{W}_2^{-1}\approx \SF{\m{A}}{\B_2} = \SF{\m{K}}{\B_1,\B_2}
\end{align*}
 with
\begin{align*}
 \SF{\m{A}}{\B_2} =\left\lmat \begin{array}{l|ll|l|l}
\idx{X}{\rd_1}{\rd_1}& & & &\\\hline
&\idx{X}{\rd_2}{\rd_2}&&&\\
&&\idx{X}{\sk_2}{\sk_2}&\idx{X}{\sk_2}{\N_2}&\idx{A}{\sk_2}{\F_2} \\\hline
&&\idx{X}{\N_2}{\sk_2}&\idx{X}{\N_2}{\N_2}&\idx{A}{\N_2}{\F_2} \\ \hline
&&\idx{A}{\F_2}{\sk_2}&\idx{A}{\F_2}{\N_2}&\idx{A}{\F_2}{\F_2}\\
\end{array}\right\rmat,
\end{align*}
where the DOFs $\rd_2$ have been made inactive as well.
We note that, while the nonzero entries in the last block row and block column are unmodified from what they were in $\m{A}=\SF{\m{K}}{\B_1}$, this does not necessarily mean they are unmodified from what they were in $\m{K}$. \response{For example, consider that in \cref{fig:schurupdates} we have $\sk_1 \subset \N_2$ and $\B_3\subset\F_2$, but skeletonization with respect to $\B_1$ led to modified interactions between $\sk_1$ and $\B_3$.}

Proceeding as in the 1D case, we loop over each box at \response{level $\ell=1$}, identify its corresponding DOFs $\B_i$ and active near- and far-field DOFs $\N_i$ and $\F_i$, and perform strong skeletonization using the proxy trick to capture interactions with $\prox_i$ implicitly.  This process can be \response{seen in} \cref{fig:boxesleaf}, where the subfigures show all active DOFs at various times during the skeletonization at level $\response{\ell=1}$ of a regular discretization over the unit square.  Supposing that there are $r$ boxes at level \response{$\ell=1$} (\ie, $\L_1 = \{1,2,\dots,r\} = [r]$), and defining $[r]'\equiv\{r,r-1,\dots,1\}$ as the reversal of $[r]$, the resulting matrix at this point is
\begin{align}\label{eq:boxm}
\begin{split}
\SF{\m{K}}{\B_1,\B_2,\dots,\B_{r}} &\approx \left(\m{V}_r^{-1}\dots \m{V}_2^{-1}\m{V}_1^{-1}\right)\m{K}\left(\m{W}_1^{-1}\m{W}_2^{-1}\dots\m{W}_r^{-1}\right)\\
&\equiv\left(\prod_{i\in[r]'}\m{V}_i^{-1}\right)\m{K}\left(\prod_{i\in[r]}\m{W}_i^{-1}\right).\end{split}
\end{align}

Note that in $\SF{\m{K}}{\B_1,\B_2,\dots,\B_{r}}$ each redundant DOF set $\rd_i$ is completely decoupled from the rest of the problem, but each skeleton DOF set $\sk_i$ remains coupled to all other skeleton DOFs.

\begin{figure}
\centering
\begin{minipage}[b]{0.45\linewidth}\centering
\includegraphics[scale=0.325]{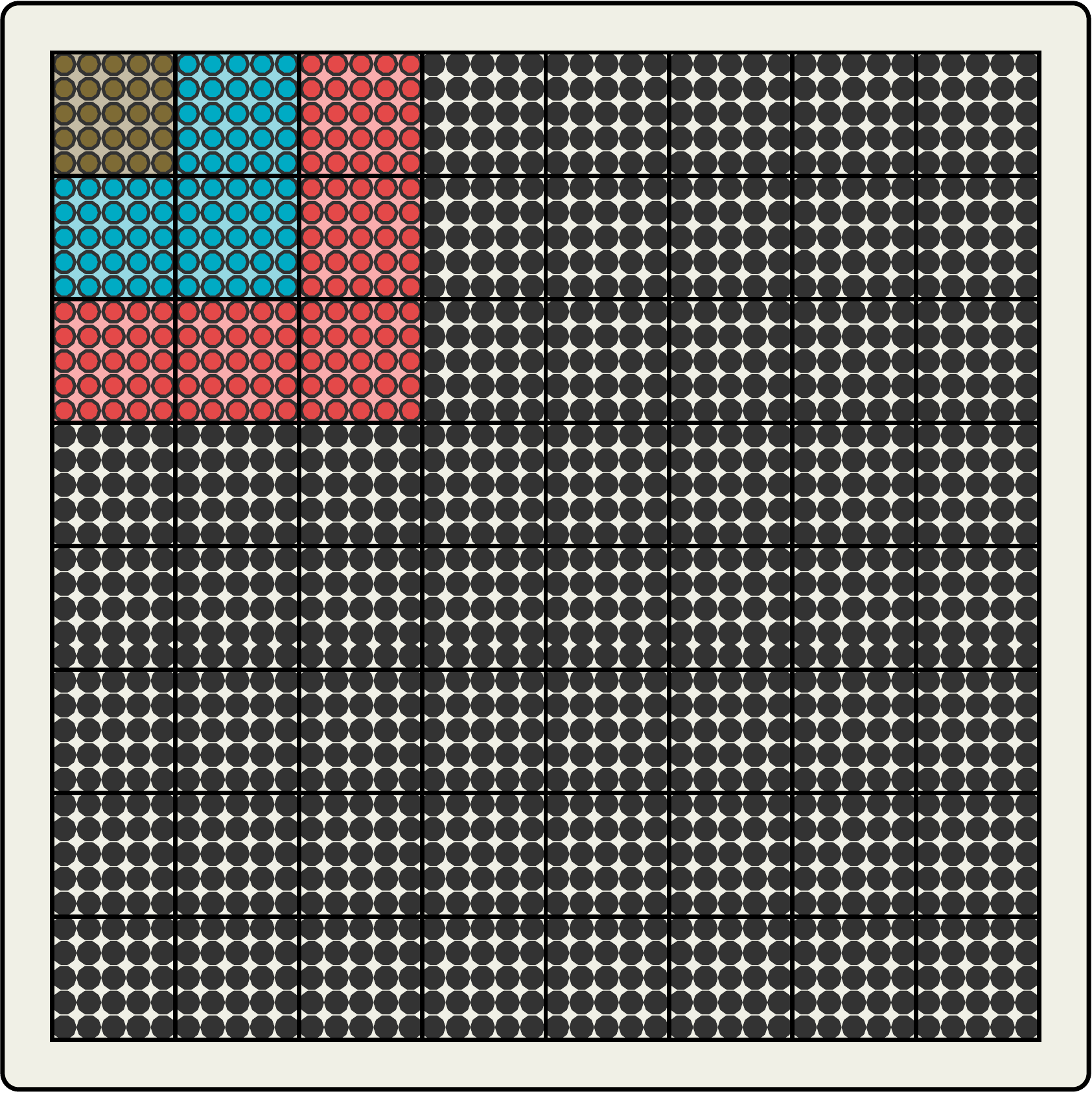}
\\\vspace*{1.5em}
\includegraphics[scale=0.325]{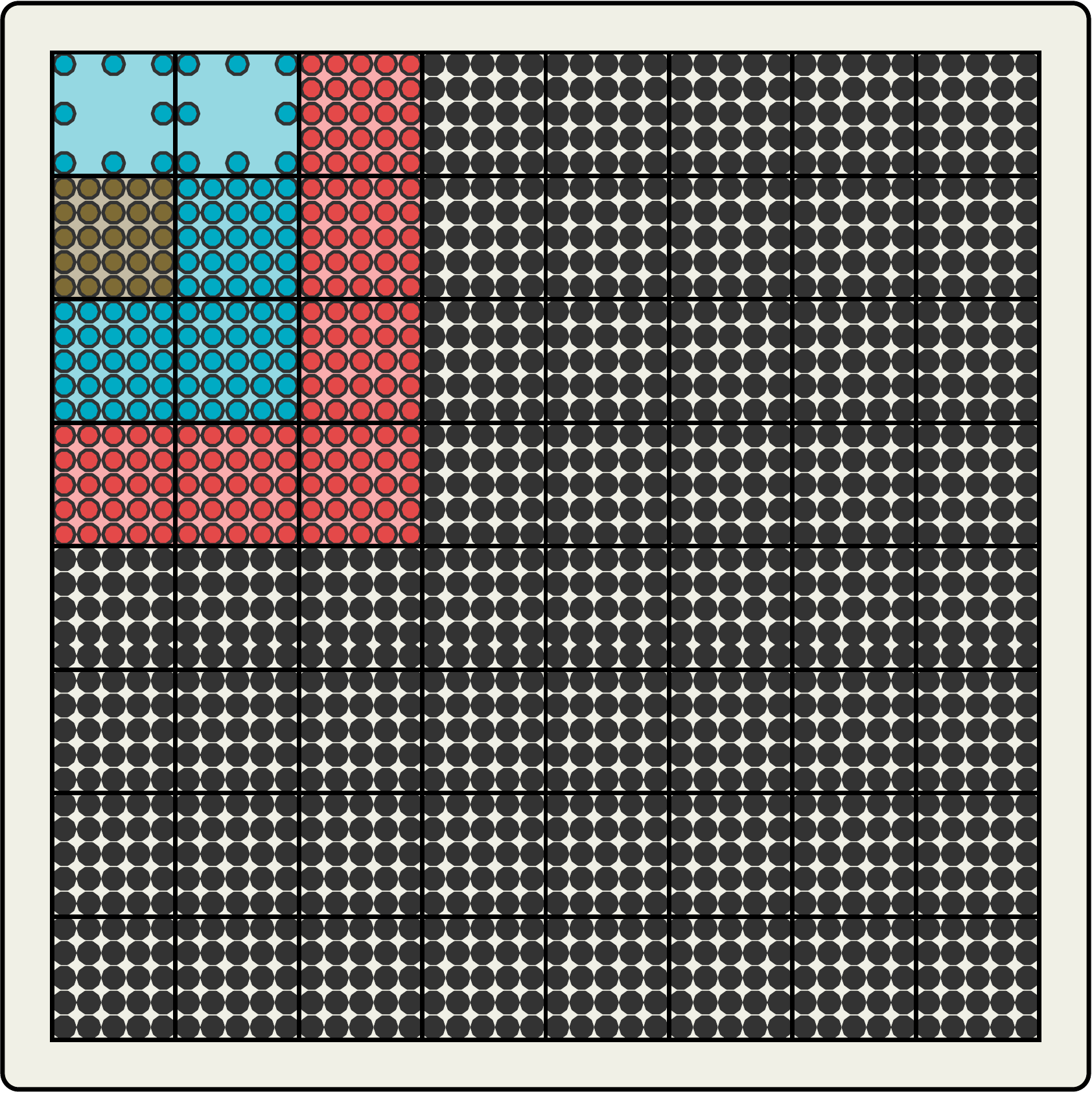}
\end{minipage}
\begin{minipage}[b]{0.45\linewidth}\centering
\includegraphics[scale=0.325]{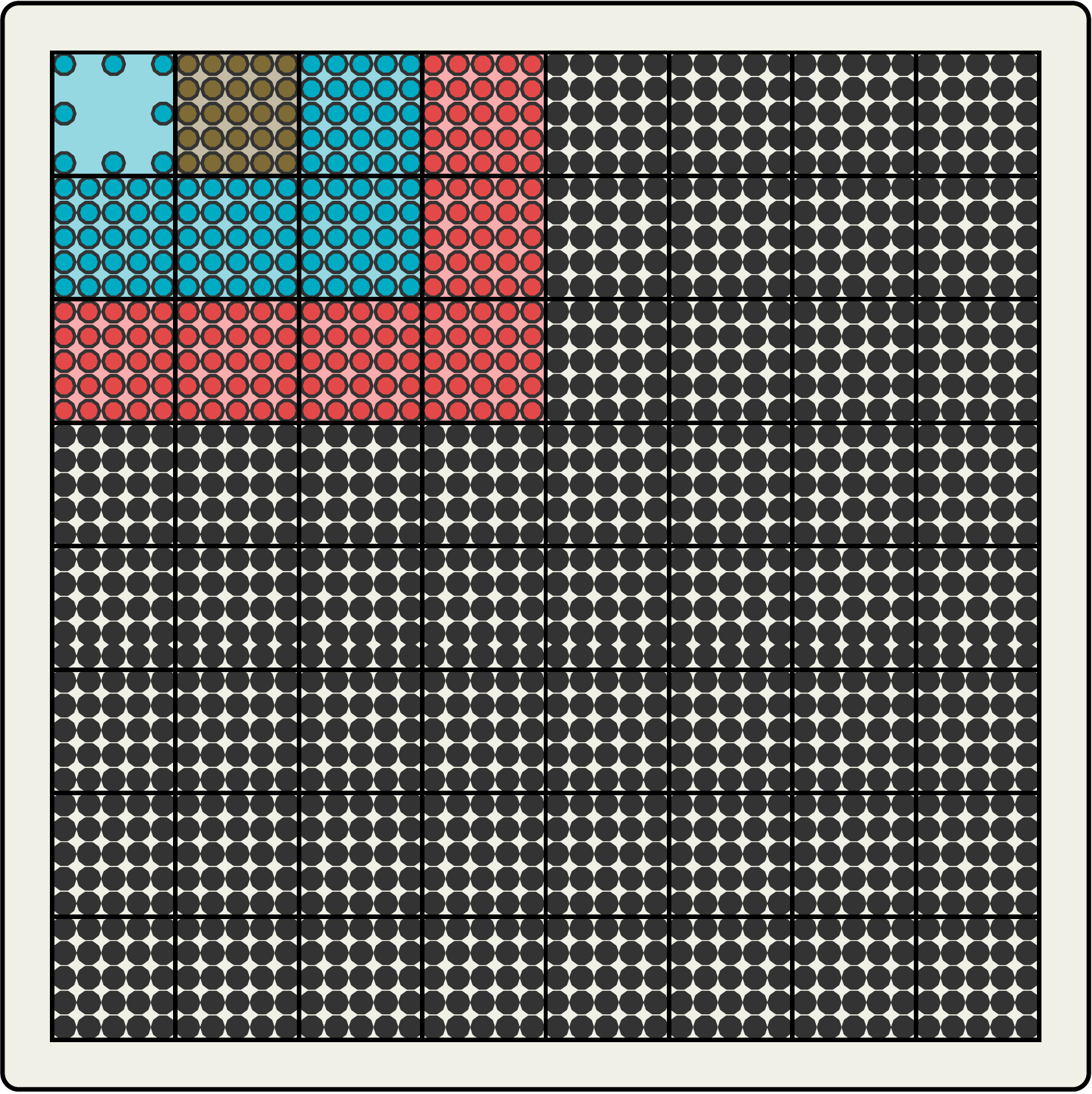}
\\\vspace*{1.5em}
\includegraphics[scale=0.325]{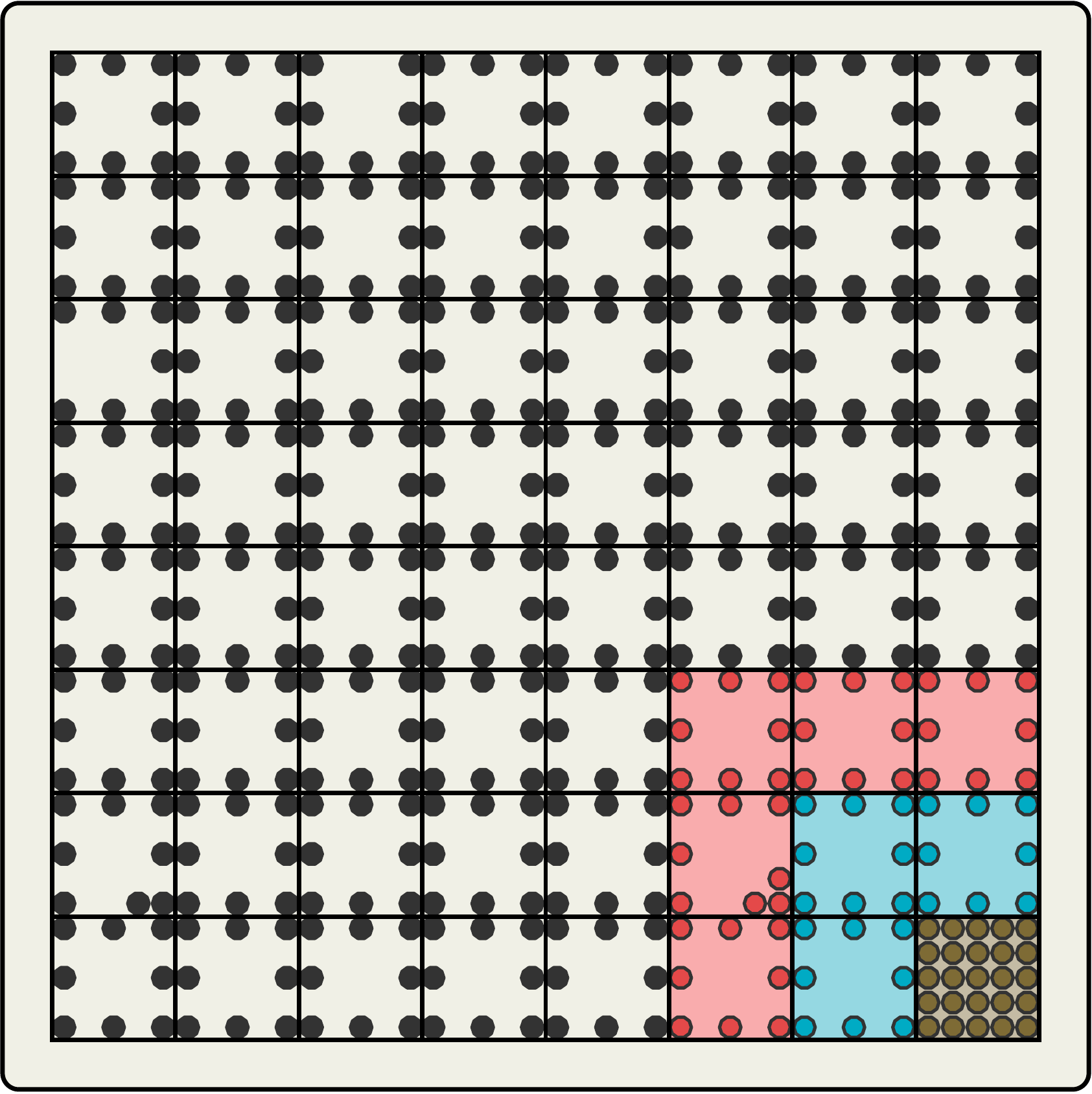}
\end{minipage}
\caption{Considering an integral equation discretized uniformly over the 2D domain $\Omega=[0,1]^2,$ we display the active DOFs at the point of skeletonization with respect to $\B_1$ (top-left), $\B_2$ (top-right), $\B_3$ (bottom-left), and $\B_{64}$ (bottom-right).  When skeletonizing with respect to $\B_i$ (brown DOFs) the near-field DOFs $\N_i$ are colored blue and the far-field DOFs \response{$\notprox_i$ corresponding to boxes inside the proxy surface are colored red.}  Note that the full set of far-field DOFs $\F_i$ includes not just the red DOFs but also all gray DOFs.\label{fig:boxesleaf}}
\end{figure}

\subsubsection{The general case: subsequent levels}
Having finished level $\ell=1$ of the tree, we step up to the next level of the spatial hierarchy, wherein boxes are twice as large in each axial direction.  Similar to our definitions of $\F_i$ and $\N_i$, for a level $\ell > 1$ we define the DOFs $\B_i$ of a box to be any \emph{active} DOFs geometrically contained in that box, that is, if
\begin{align*}
C_i \equiv \left\{j \,\vert \text{ box }j \text{ is a child box of box }i \right\},
\end{align*}
then $\B_i \equiv \bigcup_{j\in C_i} \sk_j$ (\ie, it is the union of the \emph{skeleton} DOFs of its child boxes).  With this definition in tow we state the following corollary of \cref{thm:farprox}.

\begin{corollary}\label{cor:far}
Suppose $r$ is the number of the last DOF set at level $\ell \ge 1$.  Then, at the beginning of level $\ell+1$, all far-field interactions between active DOFs at level $\ell+1$ are unmodified from their initial values in $K$.  That is, if
$\m{A} = \SF{\m{K}}{\B_1,\B_2,\dots,\B_{r}},$
then
\begin{align*}
\left\lmat\begin{array}{c}\idx{A}{\F_j}{\B_j} \\ \idx{A}{\B_j}{\F_j}^*\end{array}\right\rmat &= \left\lmat\begin{array}{c}\idx{K}{\F_j}{\B_j} \\ \idx{K}{\F_j}{\B_j}^*\end{array}\right\rmat
\end{align*}
for all $j>r$.
\end{corollary}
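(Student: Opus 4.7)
The plan is to combine \cref{thm:farprox} with a short geometric argument that extends the conclusion from $\prox_j$ to the full far-field $\F_j$. Iterating \cref{thm:farprox} over each of the skeletonizations $i = 1, 2, \dots, r$ (all of which satisfy $j > r \ge i$) already yields the identity $\idx{A}{\prox_j}{\B_j} = \idx{K}{\prox_j}{\B_j}$ and its transpose analogue; what remains is to verify the same equality on $\F_j \setminus \prox_j$, a bounded-size collection of level-$\ell{+}1$ (or deeper) boxes that are non-adjacent to the box of $\B_j$ but still intersect the proxy surface around it.

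Reusing the key observation from the proof of \cref{thm:farprox}, a skeletonization step at some box $\B_i$ of level $\ell'' \le \ell$ and sidelength $D_{\ell''} \le D$ modifies only those matrix entries indexed by pairs of DOFs in $\sk_i \cup \N_i$, and this DOF set is contained in the $3 \times 3$ (in 2D) or $3 \times 3 \times 3$ (in 3D) block of level-$\ell''$ boxes centered on $\B_i$. Such a step can therefore alter $\idx{A}{\B_j}{\B_{j'}}$ for some $\B_{j'} \in \F_j$ only if this small block simultaneously contains a level-$\ell''$ descendant of both $\B_j$ and $\B_{j'}$.

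For $\ell'' < \ell$ no such configuration exists, since the block then spans at most $3 D_{\ell''} \le 3D/2$ in each axial direction, strictly less than the minimum boundary-to-boundary separation $2D$ of $\B_j$ from any box in $\F_j$. The delicate case is $\ell'' = \ell$, where a $3D$-wide block of level-$\ell$ boxes might appear to span the $2D$ gap; this is ruled out by a grid-alignment argument. Since each level-$\ell{+}1$ box partitions into a $2\times 2$ (or $2\times 2\times 2$) block of level-$\ell$ children aligned to a common integer grid, for each coordinate direction the set of level-$\ell$ indices $p$ whose window $\{p-1, p, p+1\}$ meets a child of $\B_j$ forms an interval of length four, and the analogous interval for $\B_{j'}$ is disjoint from it whenever $\B_j$ and $\B_{j'}$ are more than one level-$\ell{+}1$ box apart in that direction (which holds in at least one direction by non-adjacency). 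Carrying out this bookkeeping in every coordinate is the main obstacle; everything else is a direct reuse of the reasoning in \cref{thm:farprox}.
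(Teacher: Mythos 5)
The paper states \cref{cor:far} as a corollary of \cref{thm:farprox} without supplying an explicit proof, so there is no written argument to compare yours against; what you have done is carefully fill in the gap. Your observation that \cref{thm:farprox} alone handles only $\prox_j$ (and not the remaining far-field $\notprox_j = \F_j\setminus\prox_j$, which lies inside the proxy surface) is exactly right, and the remainder of your argument is correct. In particular, you correctly split into the cases $\ell''<\ell$, where the crude diameter bound $3D_{\ell''}\le 3D/2 < 2D$ suffices, and $\ell''=\ell$, where $3D>2D$ so grid alignment is genuinely needed: a $3\times3$ block of level-$\ell$ boxes can overlap at most two consecutive level-$(\ell+1)$ boxes in each coordinate, hence only a mutually adjacent $2\times\cdots\times 2$ cluster of level-$(\ell+1)$ boxes, which cannot contain both $\B_j$ and a non-adjacent $\B_{j'}\in\F_j$. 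This is the same style of geometric reasoning used (somewhat tersely) in the paper's proof of \cref{thm:farprox}, which itself implicitly relies on alignment in the case $D=D'$; your write-up makes that alignment step explicit, which is a genuine improvement in rigor. One small remark: the closing sentence calling the per-coordinate bookkeeping ``the main obstacle'' slightly undersells the point — once one notes that non-adjacency at level $\ell+1$ forces a separation of at least two in some coordinate, the per-coordinate check is immediate — but the content is correct.
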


\Cref{cor:far} tells us that, while one might fear that Schur complement updates would propagate beyond interactions between $\B_j$ and $\notprox_j$ and thus require increasing the size of the proxy surface, the opposite is in fact true: at the beginning of a level, interactions with all of $\F_j$ are unmodified.  Therefore, our argument in \cref{sec:proxy} for the use of a proxy surface still holds and it is straightforward to loop over each box on level $\response{\ell= 2}$ and perform strong skeletonization with respect to the corresponding DOFs $\B_i$, as is visualized in \cref{fig:boxes}.  We repeat level-by-level for $\ell = 3,4,\dots,L-2$, noting that at level $\ell=L-1$ all boxes are adjacent and thus all sets of active far-field DOFs are empty so compression of this form is not possible.

\begin{figure}
\centering
\begin{minipage}[b]{0.45\linewidth}\centering
\includegraphics[scale=0.325]{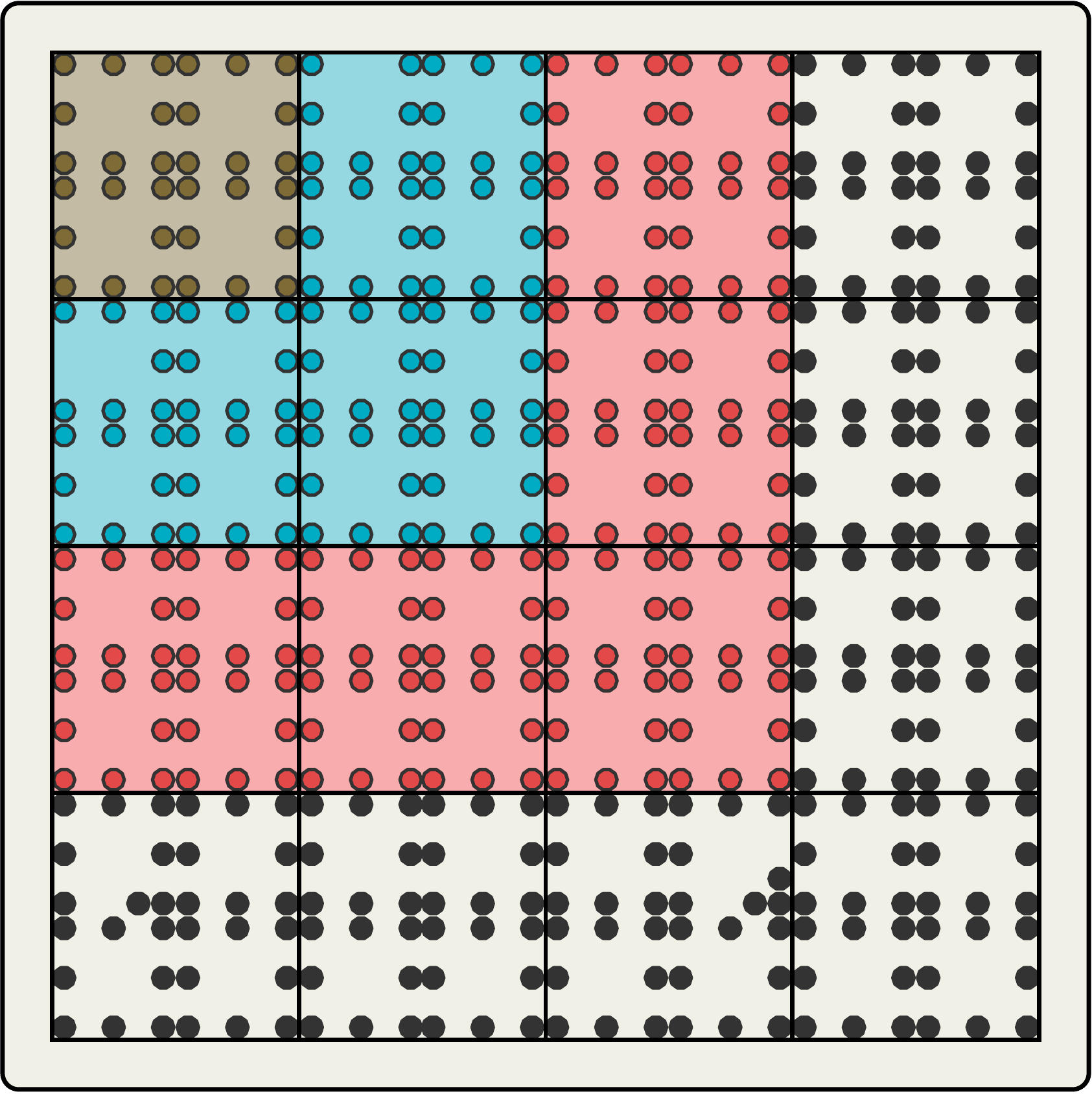}
\end{minipage}
\begin{minipage}[b]{0.45\linewidth}\centering
\includegraphics[scale=0.325]{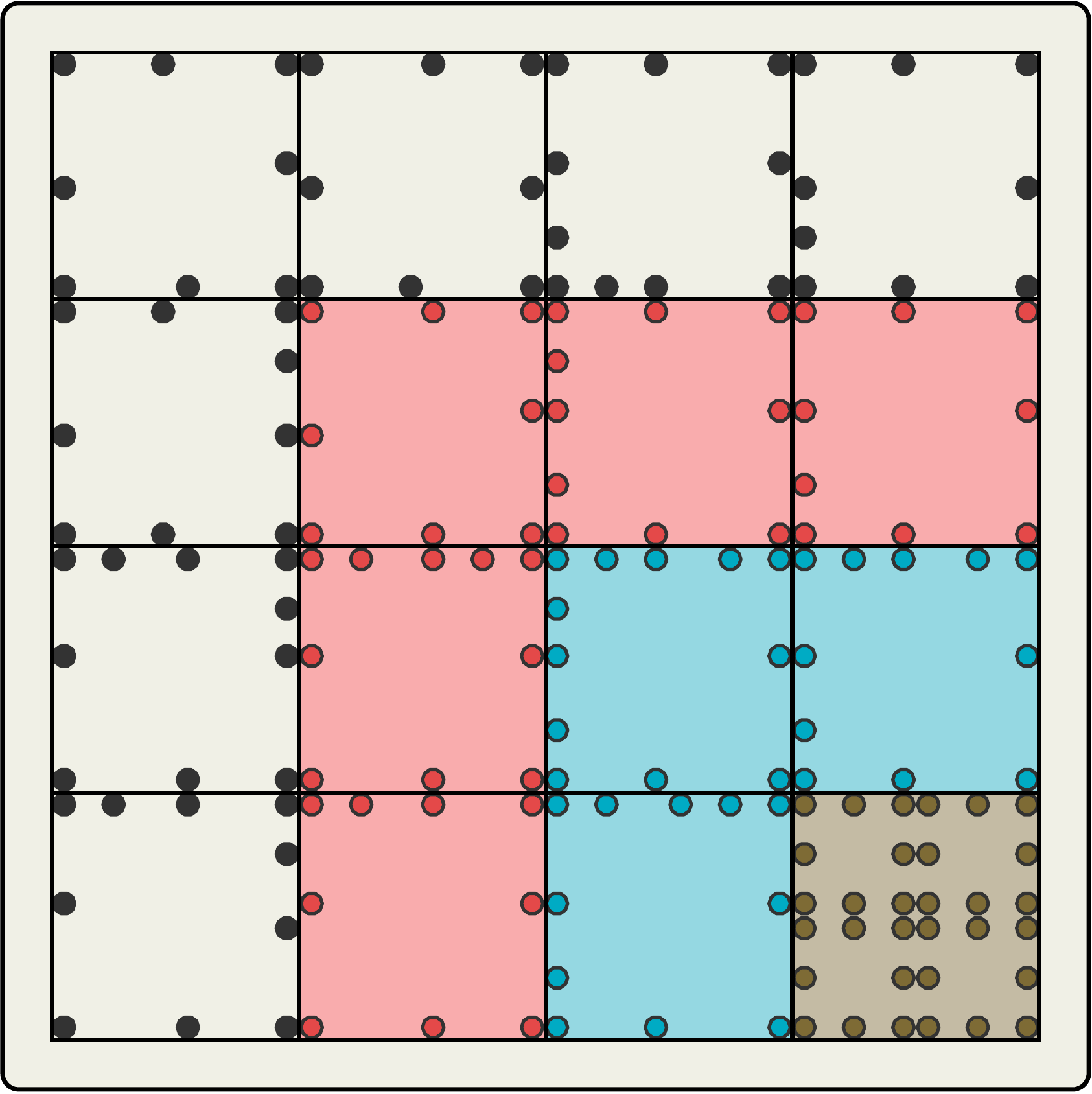}
\end{minipage}

\caption{After skeletonizing with respect to $\B_{64}$ at the end of \cref{fig:boxesleaf}, we skeletonize with respect to $\B_{65}$, the set of DOFs corresponding to the first larger box at the next level (left).  Proceeding to skeletonize with respect to each box through to $\B_{80}$, the last box at this level, we see that further compression has been attained as the number of remaining active DOFs has been reduced substantially (right).  All DOFs are colored as in \cref{fig:boxesleaf}.\label{fig:boxes}}
\end{figure}

\subsubsection{The final factorization}
Supposing that the last set of DOFs at level $\ell=L-2$ is $\B_{n}$, the matrix $\m{A} = \SF{\m{K}}{\B_1,\B_2,\dots,\B_n}$ has the same form as \eqref{eq:boxm}, albeit with more factors.  Defining $\sktop$ to be the set of all active DOFs remaining at this level of the tree,
 a last permutation to order the DOFs $\sktop$ contiguously yields the block-diagonal matrix

\begin{align*}
\m{D} &\equiv  \left\lmat \begin{array}{llll}
\idx{X}{\rd_1}{\rd_1}& & & \\
&\ddots&&\\
&&\idx{X}{\rd_{n}}{\rd_{n}}&\\
&&&\idx{A}{\sktop}{\sktop}
\end{array}\right\rmat\\
&\approx \m{P}_{t}^*\left(\prod_{i\in[n]'} \m{V}_i^{-1}\right)\m{K}\left(\prod_{i\in[n]}\m{W}_i^{-1}\right)\m{P}_{t}.
\end{align*}

Rearranging, we obtain an approximate factorization as
\begin{align}\label{eq:rss}
\m{F} \equiv \left(\prod_{i\in[n]} \m{V}_i\right) \m{P}_{t}\m{D}\m{P}_{t}^* \left(\prod_{i\in[n]'} \m{W}_i\right) \approx \m{K},
\end{align}
which we refer to as the \emph{strong recursive skeletonization factorization (RS-S)} of $\m{K}$.  The process of computing $\m{F}$ is summarized in \cref{alg:rskelf}.

Note that an approximate factorization of $\m{K}^{-1}$ may be obtained directly as
\begin{align}
\m{F}^{-1} = \left(\prod_{i\in[n]}\m{W}_i^{-1}\right) \m{P}_{t}\m{D}^{-1}\m{P}_{t}^* \left(\prod_{i\in[n]'} \m{V}_i^{-1}\right)\approx \m{K}^{-1},
\end{align}
though the approximation error will in general increase by a factor of the condition number of $\m{K}$.  Further, in the case where $\m{K}$ is positive definite, we have for all $i$ that $\m{V}_i = \m{W}_i^*$.  This means that, assuming our approximation is accurate enough that $\m{D}$ still admits a square-root $\m{D}^{1/2}$, we may obtain a generalized square-root $\m{F}^{1/2}$ with $\m{F} = \m{F}^{1/2}(\m{F}^{1/2})^*$ as
\begin{align*}
\m{F}^{1/2} &\equiv \left(\prod_{i\in[n]} \m{V}_i\right) \m{P}_{t}\m{D}^{1/2},
\end{align*}
which differs from any \response{generalized} square-root $\m{K}^{1/2}$ of $\m{K}$ (\eg, the Cholesky factor) by a unitary matrix in the ideal case ($\epsilon=0$).

In this setting, we may also compute an approximate log-determinant of $\m{K}$ using the fact that $\log|\m{D}|\approx\log|\m{K}|$, which is useful for applications in statistics \cite{mindengp}.

\begin{algorithm}
\caption{The strong recursive skeletonization factorization (RS-S)\label{alg:rskelf}}
\begin{algorithmic}[1]
\STATE{\texttt{// Initialize}}
\STATE{$\m{A} := \m{K}$}
\FOR{$\ell := 1$ \textbf{to} $L-2$}
  \FOR{{\bf each} \response{box $i\in\L_\ell$}}
    \STATE{\texttt{// Identify relevant DOFs for strong skeletonization}}
    \STATE{$[\B_i,\N_i,\F_i] := \{\text{active DOFs in box/near-field/far-field}\}$}
    \STATE{\texttt{// Perform strong skeletonization with respect to DOFs}}
    \STATE{$\m{A} := \SF{\m{A}}{\B_i} \approx \m{V}^{-1}_i\m{A}\m{W}_i^{-1}$}
  \ENDFOR
\ENDFOR
\STATE{\texttt{// Store middle block diagonal matrix and permutation}}
\STATE{$\m{D} := \m{P}_{t}^*\m{A}\m{P}_{t}$}

\STATE{\textbf{Output: }$\m{F}$ as in \eqref{eq:rss}}
\end{algorithmic}
\end{algorithm}

\subsubsection{Complexity}
\response{As written, \cref{alg:rskelf} applies to an arbitrary tree decomposition of space, \ie, some regions of space may be more refined than others in an adaptive fashion.  To compute meaningful complexity bounds, however, it is necessary to impose some structure on the tree. As is standard, we assume a tree} with $L=\O(\log N)$ levels in $d$ dimensions is given such that each leaf box contains at most a constant number of DOFs independent of $N$.  Letting $k_\ell$ denote the maximum of $|\sk_i|$ over all DOF sets corresponding to boxes on level $\ell$ and assuming $k_\ell \le k_{\ell+1}$ for all $\ell$, we obtain the following complexity result.
\begin{theorem}\label{thm:complexity}
Under the above assumptions and assuming further that a constant number of points is used to discretize the proxy surface $\Gamma$, we have that the cost $t_f$ of constructing the RS-S factorization $\m{F}$ according to \cref{alg:rskelf} and the cost $t_s$ of applying $\m{F}$ or $\m{F}^{-1}$ are given, respectively, as
 \begin{align*}
 t_f &= \O(N) + \sum_{\ell=1}^{L-2} \O(2^{d(L-\ell)}k_\ell^3),\quad  t_s = \O(N) + \sum_{\ell=1}^{L-2} \O(2^{d(L-\ell)}k_\ell^2).
 \end{align*}
 The memory requirement is trivially $m_f=\O(t_s)$.
\end{theorem}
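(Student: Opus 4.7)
The plan is to bound the work per box at each level of the tree, then sum over boxes within a level and finally over levels.

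First I would analyze the cost of a single strong skeletonization step at a box $i$ on level $\ell$. There are two contributing pieces. The interpolative decomposition via the proxy trick requires factoring the stacked matrix of \eqref{eq:smallid}, which has $|\notprox_i|+n_p$ rows and $|\B_i|$ columns. By \cref{cor:far} the far-field blocks touched in this ID are unchanged from $\m{K}$, so the proxy argument applies; moreover $|\notprox_i|$ and $|\B_i|$ are both $O(k_{\ell-1})$, since $\notprox_i$ consists of a constant number of adjacent boxes at level $\ell$ (each of active size at most $k_{\ell-1}$), $\B_i$ is the union of at most $2^d$ child skeletons, and $n_p=O(1)$. The ID cost is therefore $O(k_{\ell-1}^3)$. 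The subsequent block elimination decouples $\rd_i$ from $\sk_i\cup\N_i$ at cost $O(|\rd_i|^3+|\rd_i|(|\sk_i|+|\N_i|)^2)$; since $|\rd_i|\le|\B_i|=O(k_{\ell-1})$, $|\sk_i|\le k_\ell$, and $|\N_i|=O(k_{\ell-1})$ (constant-many neighbors each with at most $k_{\ell-1}$ active DOFs), the assumption $k_{\ell-1}\le k_\ell$ gives per-box cost $O(k_\ell^3)$.

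Next I would sum over the $O(2^{d(L-\ell)})$ boxes on level $\ell$ to get level work $O(2^{d(L-\ell)} k_\ell^3)$, and then over $\ell=1,\dots,L-2$ to obtain the claimed $t_f$. The additive $O(N)$ absorbs the one-time bookkeeping needed to identify box membership and the near- and far-field DOF sets via the tree (a standard tree traversal is $O(N)$ with leaves of constant size) and the trivial block diagonal solve for $\idx{A}{\sktop}{\sktop}$ and the $\idx{X}{\rd_i}{\rd_i}$ blocks.

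For the apply cost $t_s$, I would observe from \eqref{eq:skelop} that each $\m{V}_i$ and $\m{W}_i$ is a product of block unit-triangular matrices whose only nonzero off-diagonal blocks are of size $O(k_\ell)\times O(k_\ell)$ (the interpolation matrix $\m{T}$ and the two Schur-elimination blocks $\idx{A}{\J}{\I}\idx{A}{\I}{\I}^{-1}$ and $\idx{A}{\I}{\I}^{-1}\idx{A}{\I}{\J}$ from \eqref{eq:landu}, each involving DOF sets of size $O(k_\ell)$). Applying or inverting such a factor costs $O(k_\ell^2)$; multiplying by the number of boxes per level and summing reproduces the sum in $t_s$, with the $O(N)$ term absorbing the permutations $\m{P}_t$ and the block-diagonal $\m{D}$ apply. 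The memory bound $m_f=O(t_s)$ follows immediately, since the storage for each factor is proportional to the cost of applying it.

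The main obstacle, and the step I would spend the most care on, is verifying that every dimension appearing in the ID and elimination at level $\ell$ genuinely stays $O(k_\ell)$ despite Schur complement updates from earlier levels. This is exactly where \cref{thm:farprox} and \cref{cor:far} do the heavy lifting: they guarantee that the blocks fed into the proxy-accelerated ID are untouched by prior skeletonizations, so the proxy trick yields a small ID matrix whose row count is governed by the constant-size geometric neighborhood rather than by the global $N$. Once this is in hand, the remaining arithmetic is routine.
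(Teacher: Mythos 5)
Your proposal is correct and follows essentially the same route as the paper: bound the per-box cost by $\O(k_\ell^3)$ via the size of the proxy ID matrix and the block-elimination cost from \cref{sec:block_elimination}, multiply by the $\O(2^{d(L-\ell)})$ boxes per level, and sum; the $t_s$ and $m_f$ bounds follow by the same accounting with $\O(k_\ell^2)$ per factor. Your version is somewhat more explicit than the paper's, which lumps the ID and elimination costs together.

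One small misattribution worth fixing: you write that ``by \cref{cor:far} the far-field blocks touched in this ID are unchanged from $\m{K}$.'' This is not true mid-level --- after skeletonizing earlier boxes on level $\ell$, the block $\idx{A}{\notprox_j}{\B_j}$ may well carry Schur complement updates (see \cref{fig:schurupdates}). The result actually needed is \cref{thm:farprox}, which guarantees that interactions between $\B_j$ and $\prox_j$ (DOFs \emph{outside} the proxy surface) are unmodified; this is what makes the Green's-identity argument valid. Interactions with $\notprox_j$ may be modified, but that is harmless: those are compressed directly rather than through the proxy surface, and $|\notprox_j| = \O(k_{\ell-1})$ either way, so the cost bound is unaffected.
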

\begin{proof}
Let $k_{0} = 1$ for convenience.  Note that, for a DOF set $\B_i$ corresponding to a box at level $\ell$, we have $|\B_i| = \O(k_{\ell-1})$, $|\N_i| = \O(k_{\ell-1})$, and $|\notprox_i| = \O(k_{\ell-1})$, since for leaf boxes the number of DOFs is bounded by a constant and for non-leaf boxes the DOFs are given by aggregating skeleton DOFs of child boxes at the previous level.

Because the proxy surface $\Gamma$ is discretized with a constant number of points, \response{the first matrix in \eqref{eq:smallid} used to compute an ID for the skeletonization with respect to $\B_i$ is of size $\O(|\notprox_i|)\times\O(|\B_i|)$}.  This implies that the cost of skeletonizing with respect to the DOFs $\B_i$ corresponding to a box at level $\ell$ is $\O(k_\ell^3)$ using the complexity result in \cref{sec:block_elimination}.

Finally, at each level $\ell$, there are at most $2^{d(L-\ell)}$ boxes, which gives the stated complexity for $t_f$ using the fact that $2^{dL}=\O(N)$.  The complexity for $t_s$ follows a similar argument, noting that all the block \response{unit-triangular} matrices can be trivially inverted.
\qquad
\end{proof}

For kernel functions $\response{K(z)}$ such as we consider here, \ie, relatively non-oscillatory Green's functions arising from elliptic PDEs, standard multipole-type estimates \cite{fastmultipole,fmm3d} can typically be used to show far-field interaction blocks $\idx{K}{\B_i}{\F_i}$ have ranks depending only weakly on $N$.  As previously mentioned in \cref{sec:leaf}, however, \cref{alg:rskelf} involves compressing also far-field interaction blocks that have received Schur complement updates to some of their entries from earlier steps of skeletonization.  For such entries, multipole estimates no longer directly apply, but ample numerical experimentation seem to
indicate similar rank behavior (see \cref{sec:results}) and thus it is common to assume that updated blocks of this nature still exhibit multipole-like rank behavior \cite{hifie,corona2013,ifmm2}.  Proceeding under this assumption, we obtain a more explicit complexity estimate.

\begin{corollary}\label{cor:scaling}
Suppose that for any fixed tolerance $\epsilon$ we have \response{$k_\ell = \O(\ell^q)$ in \cref{thm:complexity} for some $q>0$, \ie, the skeleton sets grow only as some power of the level index $\ell$ and $k_{L-2} = \O(\log^q N)$}. Then the RS-S factorization cost $t_f$, apply/solve cost $t_s$, and memory requirement $m_f$ scale as
 \begin{align*}
 t_f &= \O(N),\quad  t_s = \O(N), \quad m_f = \O(N),
 \end{align*}
 with constants depending on the tolerance $\epsilon$ and dimension $d$.
\end{corollary}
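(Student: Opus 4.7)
The plan is to substitute the hypothesized scaling $k_\ell = O(\ell^q)$ directly into the level-by-level cost expressions from \cref{thm:complexity} and observe that once the factor $2^{dL} = O(N)$ is extracted from $2^{d(L-\ell)}$, the remaining per-level work decays geometrically in $\ell$ against only polynomial growth in $\ell$. The sum over levels then collapses to an $O(1)$ multiple of $N$ by a geometric-versus-polynomial series comparison.

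Concretely, for the factorization cost I would substitute $k_\ell = O(\ell^q)$ to get
\begin{align*}
t_f = O(N) + \sum_{\ell=1}^{L-2} O\!\left(2^{d(L-\ell)} \ell^{3q}\right) = O(N) + O(N) \sum_{\ell=1}^{L-2} 2^{-d\ell}\ell^{3q},
\end{align*}
using $2^{d(L-\ell)} = 2^{dL} \cdot 2^{-d\ell}$ together with $2^{dL} = O(N)$, which holds because the tree has $L = O(\log N)$ levels with a bounded number of DOFs per leaf box. The partial sum $\sum_{\ell=1}^{L-2} 2^{-d\ell}\ell^{3q}$ is bounded above by the convergent series $\sum_{\ell=1}^\infty 2^{-d\ell}\ell^{3q}$, whose value depends only on $d$ and $q$; hence $t_f = O(N)$. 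The same argument with $3q$ replaced by $2q$ shows $t_s = O(N)$, and the memory bound $m_f = O(t_s) = O(N)$ then follows immediately from \cref{thm:complexity}.

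There is not really a hard step here: the entire content of the corollary is a geometric-versus-polynomial sum, for which a ratio-test argument or an explicit bound via $\sum \ell^{3q} x^\ell$ with $x = 2^{-d} < 1$ will suffice. One small bookkeeping remark is that the additional hypothesis $k_{L-2} = O(\log^q N)$ is in fact implied by $k_\ell = O(\ell^q)$ together with $L = O(\log N)$; it appears to be stated separately so as to make explicit that skeleton-set growth at the coarsest compressible level does not spoil the geometric decay. The constants that the corollary asserts depend on $\epsilon$ and $d$ are precisely those hidden in the big-O of $k_\ell = O(\ell^q)$ (which scales with the target tolerance $\epsilon$) and in the tail bound of the geometric series (which scales with $d$ and $q$).
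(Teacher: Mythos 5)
Your proof is correct and is exactly the argument the paper intends: the corollary is stated without a proof in the paper, and the only content is the substitution $k_\ell = \O(\ell^q)$ into the per-level sums of \cref{thm:complexity}, followed by factoring out $2^{dL} = \O(N)$ and bounding $\sum_{\ell\ge 1} 2^{-d\ell}\ell^{3q}$ (resp.\ $\ell^{2q}$) by a convergent series. Your side observation that the clause ``$k_{L-2} = \O(\log^q N)$'' is a restatement rather than an independent hypothesis, following from $k_\ell = \O(\ell^q)$ together with $L = \O(\log N)$, is also accurate.
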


Note that the construction of the initial tree decomposition of space requires an additional upfront cost of $\O(N\log N)$, but in practice this cost is negligible compared to the factorization itself.

\subsection{Extension: hybrid skeletonization}
Algorithmically, the RS-S factorization has much in common with the RS factorization \cite{martinsson-rokhlin,hifie}.  The key distinction between the two is exactly what is meant by ``skeletonization''.  In \cref{sec:strongskel}, the strong skeletonization process we describe is used to compress far-field interactions (\eg, $\idx{K}{\B}{\F}$), leading to ranks essentially independent of $N$ under our assumptions.  In contrast, in the traditional skeletonization procedure both the far-field \emph{and} the near-field are compressed, \ie, blocks such as $\idx{K}{\B}{\comp{\B}}$ with $\comp{\B} = \N\cup\F$. As a consequence, the skeleton set grows with the rank of the near-field interactions, which typically goes as $\O(N^\frac{d-1}{d})$ at the top levels as has been illustrated in previous work \cite{rskel}.  After sparse elimination analogous to the strong case, we are left with
\begin{align*}
 \widetilde{\m{U}}_\m{T}\widetilde{\m{P}}^* \m{A}\widetilde{\m{P}}\widetilde{\m{L}}_\m{T} &=
 \left\lmat
 \begin{array}{ll|l}{}
 \idx{X}{\rd}{\rd} & & \\
&\idx{X}{\sk}{\sk}&\idx{A}{\sk}{\comp{\B}} \\\hline
&\idx{A}{\comp{\B}}{\sk}&\idx{A}{\comp{\B}}{\comp{\B}}
\end{array}
\right\rmat
\equiv \SFW{\m{A}}{\B}.
\end{align*}
Defining the notation
\begin{align}\label{eq:weakskel}
\widetilde{\m{V}}\equiv \widetilde{\m{P}}\widetilde{\m{U}}_\m{T}^{-1}, \quad \widetilde{\m{W}}\equiv\widetilde{\m{L}}^{-1}_\m{T}\widetilde{\m{P}}^*
\end{align}
analogously to \eqref{eq:skelop}, we obtain $\SFW{\m{A}}{\B}\approx \widetilde{\m{V}}^{-1}\m{A}\widetilde{\m{W}}^{-1}$.
We refer to this near-field compression and subsequent decoupling as \emph{weak} skeletonization to distinguish it from its strong counterpart.

While strong skeletonization typically leads to asymptotically more efficient factorizations \response{than} weak skeletonization due to the higher rank of near-field interactions compared to far-field interactions, it suffers from a higher storage cost.  This is because, in contrast to the weak case, strong skeletonization requires an additional step to explicitly decouple redundant DOFs from their near-field, and the corresponding block elimination operators must be stored.  To decrease the constant factor in the asymptotic storage cost of strong skeletonization, we can combine both weak and strong skeletonization in alternating fashion to obtain the \emph{hybrid \response{recursive} skeletonization factorization (RS-WS)} in \cref{alg:rskelfh}.

Using exactly the same tree decomposition as before, we begin by looping over each box at the bottom level $\ell=1$ and performing weak skeletonization with respect to the corresponding DOF sets $\widetilde{\B}_i$ for $i\in\L_1$, where here we use a tilde to explicitly mark that we are performing \emph{weak} skeletonization as in \eqref{eq:weakskel}. Assuming $|\L_1| = r$, this yields
\begin{align*}
\begin{split}
\SFW{\m{K}}{\widetilde{\B}_1,\widetilde{\B}_2,\dots,\widetilde{\B}_{r}} &\approx \left(\prod_{i\in\L_1'}\widetilde{\m{V}}_i^{-1}\right)\m{K}\left(\prod_{i\in\L_1^{\phantom{\circ}}}\widetilde{\m{W}}_i^{-1}\right).\end{split}
\end{align*}

Having now decoupled some number of DOFs via weak skeletonization without modifying any nonzero off-diagonal blocks, it is now possible to loop \emph{again} over $\L_1$, this time performing \emph{strong} skeletonization with respect to each set of active DOFs $\B_i$ on the level.  With $\m{A} = \SFW{\m{K}}{\widetilde{\B}_1,\widetilde{\B}_2,\dots,\widetilde{\B}_{r}}$, this gives
\begin{align*}
\SF{\m{A}}{\B_1,\B_2,\dots,\B_{r}} \approx \left(\prod_{i\in\L_1'}\m{V}_i^{-1}\right)\left(\prod_{i\in\L_1'}\widetilde{\m{V}}_i^{-1}\right)\m{K}\left(\prod_{i\in\L_1^{\phantom{\circ}}}\widetilde{\m{W}}_i^{-1}\right)\left(\prod_{i\in\L_1^{\phantom{\circ}}}\m{W}_i^{-1}\right).
\end{align*}
We repeat this process of weak skeletonization followed by strong skeletonization at each level.  A final step of permutation leads to the RS-WS factorization $\m{F}\approx \m{K}$ with
\begin{align}\label{eq:rws}
\m{F} \equiv \left[\prod_{\ell\in[L-2]}\left(\prod_{i\in\L_\ell} \widetilde{\m{V}}_i\right)\left(\prod_{i\in\L_\ell} \m{V}_i\right)\right] \m{P}_{t}\m{D}\m{P}_{t}^* \left[\prod_{\ell\in[L-2]'}\left(\prod_{i\in\L_\ell'} {\m{W}}_i\right)\left(\prod_{i\in\L_\ell'} \widetilde{\m{W}}_i\right)\right],
\end{align}
which is analogous to \eqref{eq:rss} but more cumbersome notationally due to the need to loop over the boxes on each level twice.  We remark that, as a modification to the above, it is possible to perform a final step of weak skeletonization at level $\ell=L-1$ even though subsequent strong skeletonization is not possible, and this is what we do in practice.

\begin{algorithm}
\caption{The hybrid recursive skeletonization factorization (RS-WS)\label{alg:rskelfh}}
\begin{algorithmic}[1]
\STATE{\texttt{// Initialize}}
\STATE{$\m{A} := \m{K}$}
\FOR{$\ell := 1$ \textbf{to} $L-2$}
  \FOR{{\bf each} box $i\in\L_\ell$}
    \STATE{\texttt{// Identify relevant DOFs for weak skeletonization}}
    \STATE{$\left[\widetilde{\B}_i,\comp{\widetilde{\B}}_i\right] := \{\text{active DOFs in box/complement}\}$}
    \STATE{\texttt{// Perform weak skeletonization with respect to DOFs}}
    \STATE{$\m{A} := \SFW{\m{A}}{\widetilde{\B_i}} \approx \widetilde{\m{V}}^{-1}_i\m{A}\widetilde{\m{W}}_i^{-1}$}
  \ENDFOR
    \FOR{{\bf each} box $i\in\L_\ell$}
      \STATE{\texttt{// Identify relevant DOFs for strong skeletonization}}
      \STATE{$[\B_i,\N_i,\F_i] := \{\text{active DOFs in box/near-field/far-field}\}$}
      \STATE{\texttt{// Perform strong skeletonization with respect to DOFs}}
      \STATE{$\m{A} := \SF{\m{A}}{\B_i} \approx \m{V}^{-1}_i\m{A}\m{W}_i^{-1}$}
    \ENDFOR
\ENDFOR
\STATE{\texttt{// Store middle block diagonal matrix and permutation}}
\STATE{$\m{D} := \m{P}_{t}^*\m{A}\m{P}_{t}$}

\STATE{\textbf{Output:} $\m{F}$ as in \eqref{eq:rws}}
\end{algorithmic}
\end{algorithm}

\section{Numerical results}
\label{sec:results}
To evaluate the performance of the \mbox{RS-S} and \mbox{RS-WS} factorizations, we implemented a number of examples in MATLAB\textsuperscript{\textregistered} on top of the FLAM library (\url{https://github.com/klho/FLAM/}).  \response{Our research code is available at \url{https://github.com/victorminden/strong-skel/}.}  We use adaptive quadtrees and octrees as appropriate, refining until the number of DOFs per leaf box is bounded by $n_\text{occ} =\O(1)$.  Diagonal blocks of $\m{D}$ were factored using the Cholesky decomposition for \response{positive definite} $\m{K}$ and the LU decomposition otherwise.

The primary quantities of interest for our examples (where applicable) are given in the following legend:
\begin{itemize}
\item $\epsilon$: tolerance parameter for all IDs;
\item $N$: total number of DOFs;
\item $t_f$: wall clock time to construct the factorization $\m{F}$, in seconds;
\item $t_s$: wall clock time to solve $\m{F}x=b$ for $x$, in seconds;
\item $m_f$: memory required to store $\m{F}$, in GB;
\item $e_a$: estimate of $\|\m{K}-\m{F}\|/\|\m{K}\|$;
\item $e_s$: estimate of $\|\m{I}-\m{K}\m{F}^{-1}\|\ge\|\m{K}^{-1}-\m{F}^{-1}\|/\|\m{K}^{-1}\|$;
\item $n_i$: number of iterations to solve \eqref{eq:linear} using CG to a tolerance of $10^{-12}$ on the relative residual norm, where the right-hand side $b$ is given up to scaling by $b=\m{K}x$ for $x$ a vector with normally-distributed entries.
\end{itemize}

We estimate the operator errors using the power method \cite{power2,power1
} to a tolerance of $10^{-2}$ in the relative error.  For this and for CG, the matrices $\m{K}$ and $\m{K}^*$ were applied via the fast Fourier transform or a kernel-independent fast multipole method as appropriate.

All computations were performed in MATLAB\textsuperscript{\textregistered} R2015b on a 64-bit Linux server with Intel\textsuperscript{\textregistered} Xeon\textsuperscript{\textregistered} E7-8890 v3 CPUs at 2.50 GHz using up to 72 cores through BLAS multithreading, where the number of cores at any time \response{was} chosen adaptively by MATLAB\textsuperscript{\textregistered}.

\subsection{Example 1: unit square in 2D}
We begin with a simple 2D example on the unit square $\Omega=[0,1]^2$.  Taking $a(x)\equiv 0$, $b(x)\equiv c(y) \equiv 1$, and $\response{K(z)\equiv -\frac{1}{2\pi}\log(\|z\|)}$ in \eqref{eq:bie}, we discretize the resulting first-kind volume integral equation using piecewise-constant collocation on a uniform $\sqrt{N}$ by $\sqrt{N}$ grid such that $\m{K}_{ij} = \frac{1}{N}K(x_i-x_j)$ for $i\ne j$.  The diagonal entries $\m{K}_{ii}$ are approximated adaptively using the \texttt{dblquad} function in MATLAB\textsuperscript{\textregistered} for simplicity such that
\begin{align*}
\m{K}_{ii} \approx \int_{-h/2}^{h/2}\int_{-h/2}^{h/2} K(x-y)\,dx\,dy,
\end{align*}
where $h\equiv 1/\sqrt{N}$.  Note that this is essentially a Nystr\"om method, but viewing it as piecewise-constant collocation makes sense of the modified diagonal.  The order of accuracy of this quadrature is not high compared to other more accurate quadratures based on the idea of local corrections near the singularity, but its simplicity makes it a good candidate for illustrating our approach.

We compare four different skeletonization-based approaches to factorizing $\m{K}$: the (weak) recursive skeletonization factorization (RS) \cite{hifie,martinsson-rokhlin}, the hierarchical interpolative factorization (HIF) \cite{hifie}, and the strong and hybrid recursive skeletonization factorizations introduced in \cref{sec:rs} (RS-S and RS-WS).  Since it is based strictly on near-field compression, we expect RS to exhibit fundamentally different asymptotic scaling, whereas the other three methods should all exhibit essentially linear scaling under our rank assumptions.  All cases were run across a range of $N$ for tolerances $\epsilon=10^{-6}$ and $\epsilon=10^{-9}$, with results visualized in \cref{fig:2dresults} and corresponding data given in \cref{tab:2Dsquare,tab:2Dsquareapply}.  For HIF and RS, we used an occupancy parameter $n_\text{occ}=64$, whereas for RS-S and RS-WS, we used $n_\text{occ}=256$, hand-tuned in each case for optimal performance. For all methods we used $n_p=64$ proxy points to discretize the proxy surface.

In this 2D example, we see that all methods remain relatively competitive in terms of factorization time $t_f$ for both tolerances.  Looking at the plots of solve time $t_s$, we see that, while all methods seem to scale as $\O(N\log N)$ or $\O(N)$, for $\epsilon=10^{-9}$, RS-S is significantly ($\sim4$X to $10$X) slower than the other methods.  Since $t_s$ decreases drastically when using RS-WS (where we add additional levels of weak skeletonization) we hypothesize that the jump in solve time for RS-S is due to caching and the increased size of the subblocks comprising factors of $\m{F}$ for RS-S as compared to the other methods.  This belief is reinforced by the scaling plots for the memory $m_f$, in which we see that, in 2D, memory usage for RS-S tends to be the highest, followed by RS, RS-WS, and HIF.  We note as well the sizable difference in $m_f$ between RS-S and RS-WS, which shows that hybrid skeletonization is effective at reducing memory usage in this setting.

\begin{figure}
\centering
\includegraphics[scale=0.325]{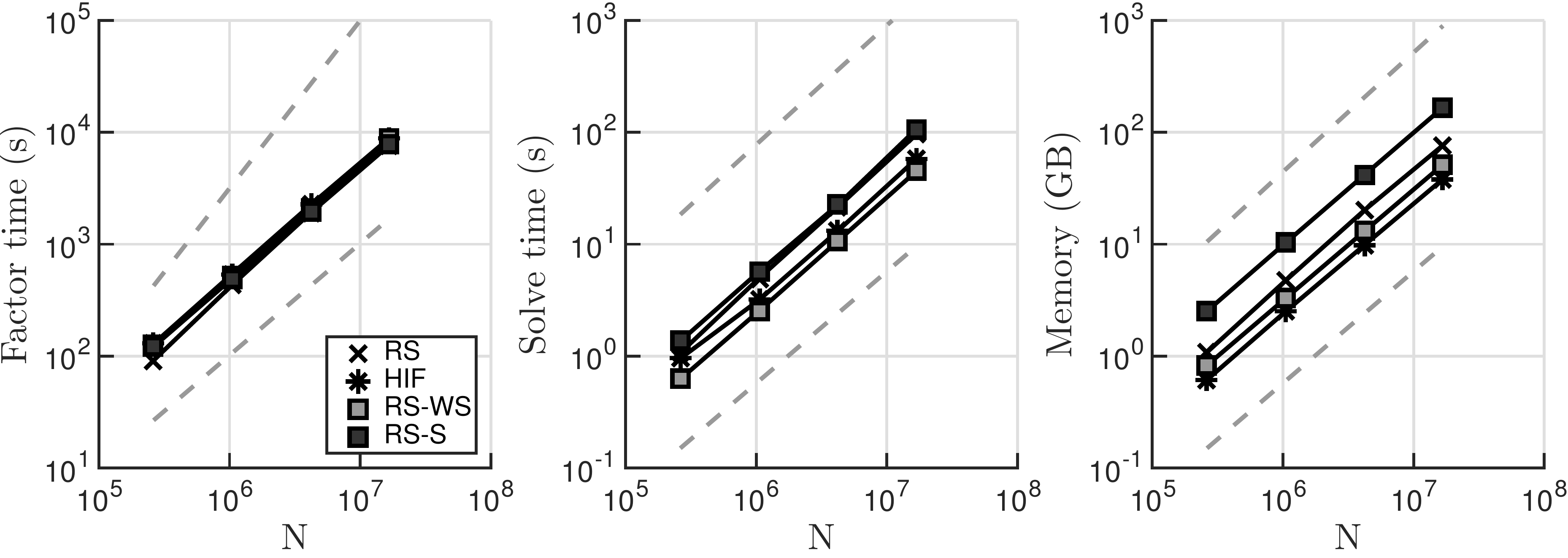}

\vspace{0.5cm}

\includegraphics[scale=0.325]{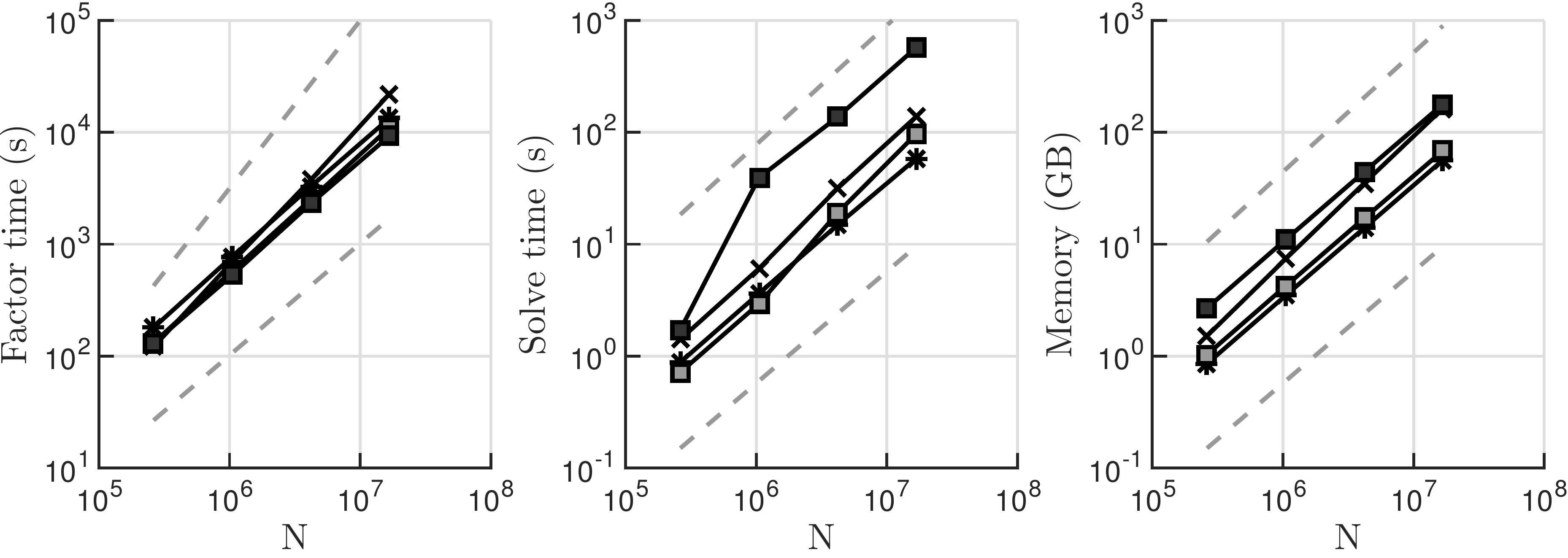}

\caption{Wall clock factor times $t_f$ and solve times $t_s$ and memory usage $m_f$ from Example 1 are shown for $\epsilon=10^{-6}$ (top row) and $\epsilon=10^{-9}$ (bottom row).  Each plot follows the top-left legend, with additional reference scaling curves $\O(N^{3/2})$ and $\O(N)$ (left subplots),  and $\O(N\log N)$ and $\O(N)$ (center and right subplots).  Corresponding data are given in \cref{tab:2Dsquare}.\label{fig:2dresults}}
\end{figure}

\begin{sidewaystable}[ph!]
\small
\ra{1.0}
\centering

\caption{Timing and memory results for Example 1  \label{tab:2Dsquare}}
\begin{tabular}{@{}@{\extracolsep{-1pt}}cc|ccc|ccc|ccc|ccc@{}}\toprule
&\multicolumn{1}{c}{}&\multicolumn{3}{c}{RS} & \multicolumn{3}{c}{HIF} & \multicolumn{3}{c}{RS-S} & \multicolumn{3}{c}{RS-WS}\\ \cmidrule(lr){3-5} \cmidrule(lr){6-8} \cmidrule(lr){9-11} \cmidrule(lr){12-14}
$ \epsilon$ & $N$ &$t_f$ &$t_s$ &$m_f$ & $t_f$& $t_s$ & $m_f$ &$t_f$ & $t_s$ &$m_f$&$t_f$ & $t_s$ & $m_f$ \\\midrule
   \multirow{ 4}{*}{$10^{-6}$}
 &$512^2$   & $\scinote{9.1}{+}{1}$ & $\scinote{1.1}{+}{0}$ & $\scinote{1.1}{+}{0}$
 & $\scinote{1.3}{+}{2}$ &  $\scinote{9.5}{-}{1}$ & $\scinote{6.1}{-}{1}$
 &  $\scinote{1.2}{+}{2}$ & $\scinote{1.4}{+}{0}$ & $\scinote{2.5}{+}{0}$
 & $\scinote{1.3}{+}{2}$ & $\scinote{6.2}{-}{1}$ & $\scinote{8.3}{-}{1}$

\\

 &$1024^2$  & $\scinote{4.3}{+}{2}$ & $\scinote{4.9}{+}{0}$ & $\scinote{4.7}{+}{0}$
 & $\scinote{5.4}{+}{2}$ &  $\scinote{3.2}{+}{0}$ & $\scinote{2.5}{+}{0}$
 &  $\scinote{4.9}{+}{2}$ & $\scinote{5.7}{+}{0}$ & $\scinote{1.0}{+}{1}$
 & $\scinote{5.1}{+}{2}$ & $\scinote{2.5}{+}{0}$ & $\scinote{3.3}{+}{0}$\\

 &$2048^2$  & $\scinote{1.9}{+}{3}$ & $\scinote{2.1}{+}{1}$ & $\scinote{2.0}{+}{1}$
 & $\scinote{2.3}{+}{3}$ &  $\scinote{1.3}{+}{1}$ & $\scinote{9.8}{+}{0}$
 &  $\scinote{1.9}{+}{3}$ & $\scinote{2.2}{+}{1}$ & $\scinote{4.2}{+}{1}$
 & $\scinote{2.1}{+}{3}$ & $\scinote{1.1}{+}{1}$ & $\scinote{1.3}{+}{1}$\\

  &$4096^2$  & $\scinote{7.7}{+}{3}$ & $\scinote{9.6}{+}{1}$ & $\scinote{7.7}{+}{1}$
 & $\scinote{8.9}{+}{3}$ &  $\scinote{5.8}{+}{1}$ & $\scinote{3.8}{+}{1}$
 &  $\scinote{7.8}{+}{3}$ & $\scinote{1.1}{+}{2}$ & $\scinote{1.7}{+}{2}$
 & $\scinote{8.8}{+}{3}$ & $\scinote{4.5}{+}{1}$ & $\scinote{5.2}{+}{1}$
\\\midrule    \multirow{4}{*}{$10^{-9}$}
 &$512^2$  &  $\scinote{1.2}{+}{2}$ & $\scinote{1.4}{+}{0}$ & $\scinote{1.5}{+}{0}$
 & $\scinote{1.8}{+}{2}$ &  $\scinote{8.8}{-}{1}$ & $\scinote{8.5}{-}{1}$
 &  $\scinote{1.3}{+}{2}$ & $\scinote{1.7}{+}{0}$ & $\scinote{2.7}{+}{0}$
 & $\scinote{1.3}{+}{2}$ & $\scinote{7.2}{-}{1}$ & $\scinote{1.0}{+}{0}$\\

 &$1024^2$  & $\scinote{7.0}{+}{2}$ & $\scinote{6.1}{+}{0}$ & $\scinote{7.4}{+}{0}$
 & $\scinote{7.8}{+}{2}$ &  $\scinote{3.6}{+}{0}$ & $\scinote{3.5}{+}{0}$
 &  $\scinote{5.3}{+}{2}$ & $\scinote{3.9}{+}{1}$ & $\scinote{1.1}{+}{1}$
 & $\scinote{5.8}{+}{2}$ & $\scinote{2.9}{+}{0}$ & $\scinote{4.2}{+}{0}$
\\
 &$2048^2$ & $\scinote{3.8}{+}{3}$ & $\scinote{3.2}{+}{1}$ & $\scinote{3.5}{+}{1}$
 & $\scinote{3.3}{+}{3}$ &  $\scinote{1.5}{+}{1}$ & $\scinote{1.4}{+}{1}$
 &  $\scinote{2.3}{+}{3}$ & $\scinote{1.4}{+}{2}$ & $\scinote{4.4}{+}{1}$
 & $\scinote{2.5}{+}{3}$ & $\scinote{1.9}{+}{1}$ & $\scinote{1.7}{+}{1}$

\\
  &$4096^2$ & $\scinote{2.2}{+}{4}$ & $\scinote{1.4}{+}{2}$ & $\scinote{1.6}{+}{2}$
 & $\scinote{1.3}{+}{4}$ &  $\scinote{5.8}{+}{1}$ & $\scinote{5.7}{+}{1}$
 &  $\scinote{9.3}{+}{3}$ & $\scinote{5.7}{+}{2}$ & $\scinote{1.8}{+}{2}$
 & $\scinote{1.1}{+}{4}$ & $\scinote{9.7}{+}{1}$ & $\scinote{6.9}{+}{1}$

\\
\bottomrule\\
\end{tabular}

\bigskip\bigskip

\caption{Accuracy results for Example 1 \label{tab:2Dsquareapply}}
\begin{tabular}{@{}@{\extracolsep{-1pt}}cc|ccc|ccc|ccc|ccc@{}}\toprule
&\multicolumn{1}{c}{}&\multicolumn{3}{c}{RS}& \multicolumn{3}{c}{HIF}&\multicolumn{3}{c}{RS-S} & \multicolumn{3}{c}{RS-WS}\\ \cmidrule(lr){3-5} \cmidrule(lr){6-8} \cmidrule(lr){9-11} \cmidrule(lr){12-14}
$ \epsilon$ & $N$ & $e_a$ &$e_s$  &$n_i$& $e_a$ &$e_s$ & $n_i$& $e_a$ &$e_s$ &  $n_i$& $e_a$ &$e_s$& $n_i$ \\\midrule
   \multirow{ 3}{*}{$10^{-6}$}
 &$512^2$   & $\scinote{2.5}{-}{07}$ & $\scinote{3.2}{-}{04}$ & 3
 & $\scinote{2.9}{-}{07}$ & $\scinote{5.8}{-}{04}$ &  3
 & $\scinote{4.0}{-}{08}$ &  $\scinote{4.0}{-}{04}$ & 3
 & $\scinote{3.0}{-}{07}$ &$\scinote{7.0}{-}{04}$ & 3

\\
 &$1024^2$   & $\scinote{7.3}{-}{07}$ & $\scinote{3.6}{-}{04}$ & 3
 & $\scinote{3.8}{-}{07}$ & $\scinote{5.0}{-}{04}$ &  3
 & $\scinote{4.2}{-}{08}$ &  $\scinote{6.0}{-}{04}$ & 3
 & $\scinote{3.0}{-}{07}$ &$\scinote{1.6}{-}{03}$ & 3

\\
 &$2048^2$   & $\scinote{1.2}{-}{06}$ & $\scinote{5.2}{-}{04}$ & 3
 & $\scinote{5.1}{-}{07}$ & $\scinote{9.7}{-}{04}$ &  3
 & $\scinote{4.4}{-}{08}$ &  $\scinote{7.9}{-}{04}$ & 3
 & $\scinote{3.0}{-}{07}$ &$\scinote{1.9}{-}{03}$ & 3
 \\
 & $4096^2$& $\scinote{1.3}{-}{06}$ & $\scinote{9.2}{-}{04}$ & 3
 & $\scinote{6.0}{-}{07}$ & $\scinote{1.0}{-}{03}$ &  3
 & $\scinote{5.0}{-}{08}$ &  $\scinote{1.6}{-}{03}$ & 3
 & $\scinote{3.1}{-}{07}$ &$\scinote{2.0}{-}{03}$ & 3   \\\midrule
   \multirow{ 3}{*}{$10^{-9}$}&
 $512^2$   &
$\scinote{1.7}{-}{10}$ & $\scinote{3.1}{-}{07}$ & 2
 & $\scinote{3.6}{-}{10}$ & $\scinote{5.3}{-}{07}$ &  2
 & $\scinote{2.7}{-}{11}$ &  $\scinote{3.3}{-}{07}$ & 2
 & $\scinote{1.5}{-}{10}$ &$\scinote{1.1}{-}{06}$ & 2

\\
 &$1024^2$   & $\scinote{6.8}{-}{10}$ & $\scinote{4.5}{-}{07}$ & 2
 & $\scinote{1.9}{-}{10}$ & $\scinote{5.6}{-}{07}$ &  2
 & $\scinote{2.7}{-}{11}$ &  $\scinote{5.5}{-}{07}$ & 2
 & $\scinote{1.9}{-}{10}$ &$\scinote{1.7}{-}{06}$ & 2

\\
 &$2048^2$   & $\scinote{7.6}{-}{10}$ & $\scinote{5.6}{-}{07}$ & 2
 & $\scinote{8.5}{-}{10}$ & $\scinote{9.7}{-}{07}$ &  2
 & $\scinote{2.9}{-}{11}$ &  $\scinote{7.8}{-}{07}$ & 2
 & $\scinote{3.0}{-}{10}$ &$\scinote{2.5}{-}{06}$ & 2
 \\
 & $4096^2$& $\scinote{7.9}{-}{10}$ & $\scinote{1.5}{-}{06}$ & 2
 & $\scinote{9.5}{-}{10}$ & $\scinote{9.9}{-}{07}$ &  2
 & $\scinote{3.0}{-}{11}$ &  $\scinote{1.5}{-}{06}$ & 2
 & $\scinote{2.2}{-}{10}$ &$\scinote{3.5}{-}{06}$ & 2   \\
\bottomrule\\
\end{tabular}

\end{sidewaystable}

Looking at \cref{tab:2Dsquareapply}, we see that the forward error $e_a$ of the approximate operator is roughly $\O(\epsilon)$.  This seems to indicate that the relative operator-norm error of the factorization is well-controlled by the relative error in the IDs of off-diagonal blocks, which is difficult to prove for factorizations based on skeletonization.  The bound $e_s$ on the error for the inverse operator exhibits similar behavior, though we lose accuracy due to ill-conditioning of $\m{K}$.  Finally, while at these accuracy levels we may \response{use} $\m{F}$ as a moderate-accuracy direct solver, these factorizations  perform exceedingly well when used as preconditioners for CG, as exhibited by the small number of iterations $n_i$ required to attain a relative residual norm of $10^{-12}$.  Note that the unpreconditioned method fails to converge within 100 iterations.

\subsection{Example 2: unit cube in 3D}
We turn now to the 3D analogue of Example 1, a first-kind volume integral equation on the unit cube $\Omega=[0,1]^3$ with $a(x)\equiv 0$, $b(x)\equiv c(y) \equiv 1$ and $\response{K(z)\equiv \frac{1}{4\pi \|z\|}}$ in \eqref{eq:bie}.  As before, we use piecewise-constant collocation on a regular grid  with adaptive quadrature for the diagonal entries.  In the interest of constructing efficient preconditioners or low-precision direct solvers, we consider the tolerance levels  $\epsilon=10^{-3}$ and $\epsilon=10^{-6}$ with results visualized in \cref{fig:3dresults} and corresponding data given in \cref{tab:3Dcube,tab:3Dcubeapply}.  For $\epsilon=10^{-3}$ we used the occupancy parameter $n_\text{occ}=64$ for all octrees, whereas for $\epsilon=10^{-6}$ we used $n_\text{occ}=512$ for RS-S and RS-WS and $n_\text{occ}=64$ for RS and HIF.  To discretize the proxy surface, we choose $n_p=512$ points randomly distributed on the sphere.

\begin{figure}
\centering
\includegraphics[scale=0.325]{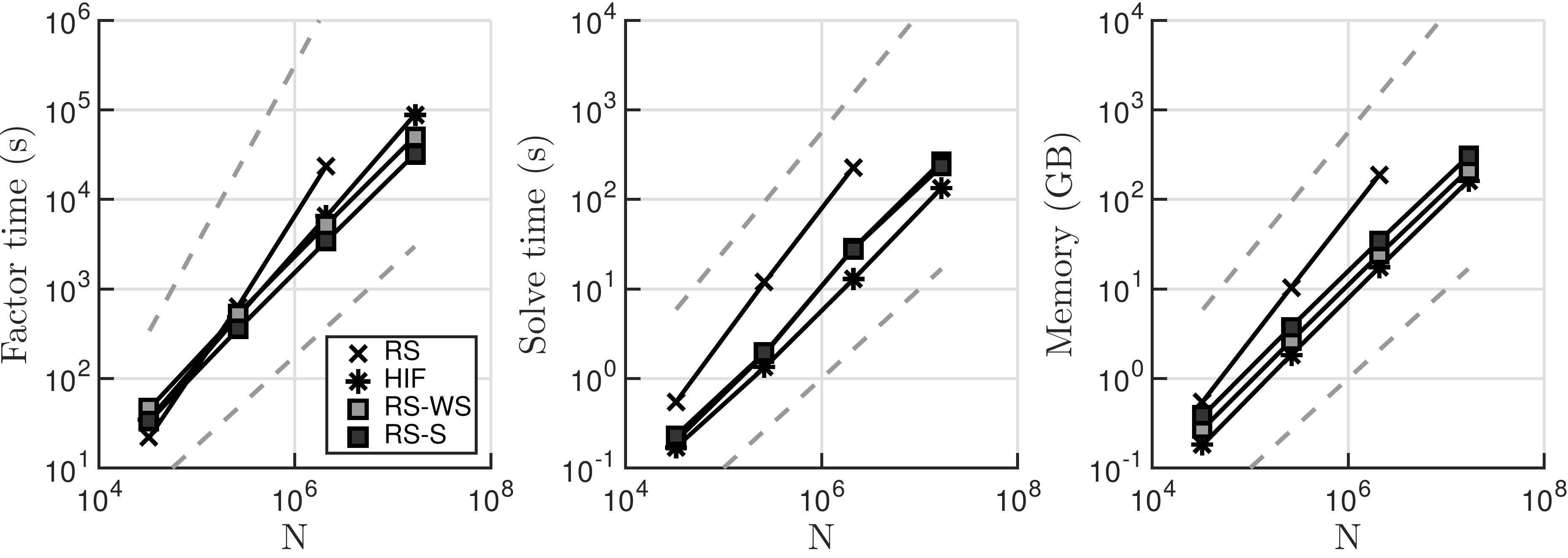}

\vspace{0.5cm}

\includegraphics[scale=0.325]{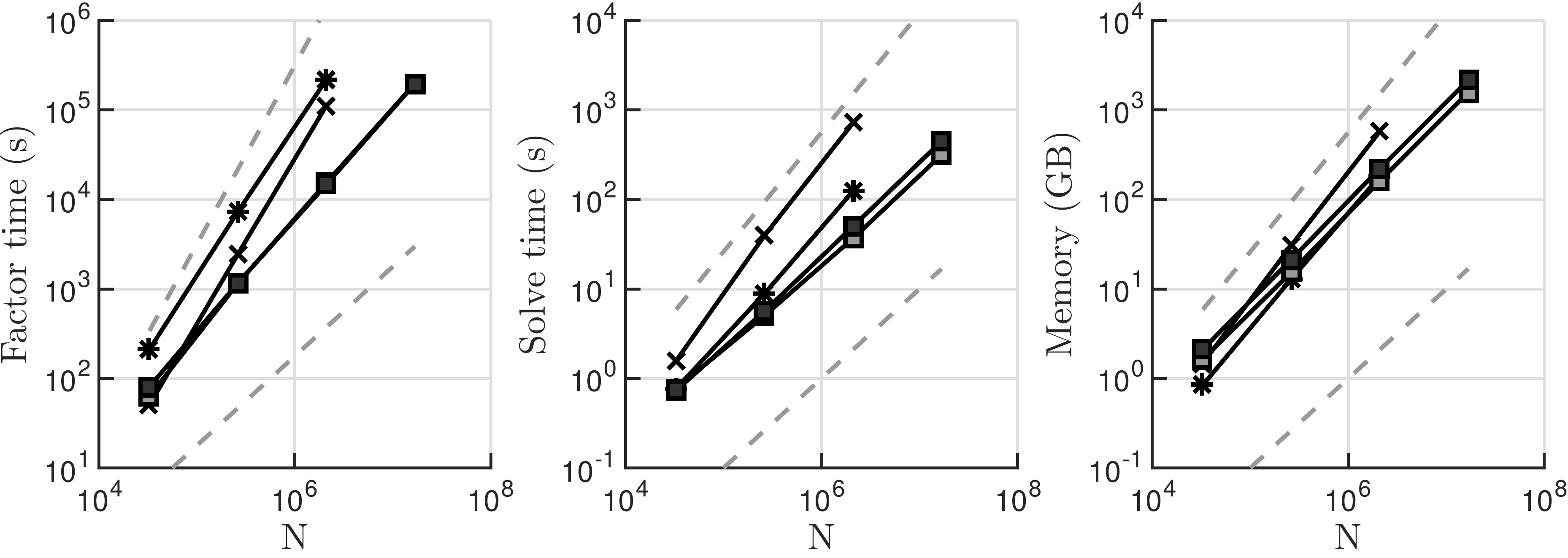}

\caption{Wall clock factor times $t_f$ and solve times $t_s$ and memory usage $m_f$ from Example 2 are shown for $\epsilon=10^{-3}$ (top row) and $\epsilon=10^{-6}$ (bottom row).  Each plot follows the top-left legend, with additional reference scaling curves $\O(N^{2})$ and $\O(N)$ (left subplots),  and $\O(N^{4/3})$ and $\O(N)$ (center and right subplots).  Corresponding data are given in \cref{tab:3Dcube}.  Note that in several cases the curves for RS-S and RS-WS lie nearly on top of each other.
\label{fig:3dresults}}
\end{figure}

Contrary to the 2D case, we immediately observe the difference in scaling between RS and the other methods considered for each of $t_f$, $t_s$, and $m_f$.  Further, for $\epsilon=10^{-3}$ we see that HIF, RS-S, and RS-WS all scale approximately like $\O(N)$ with a clear trade-off between factorization time and memory usage --- RS-S gives the smallest $t_f$ but the largest $m_f$ of the three, the opposite is true for HIF, and RS-WS is somewhere between RS and HIF.  \response{In particular, note that the memory usage for RS-WS is less than for RS-S, as desired.}

For $\epsilon=10^{-6}$, we see that $t_f$ for RS-S and RS-WS is markedly less than for HIF.  We believe that \response{some of} this overhead is due to auxiliary data structures and functions associated with the more complicated geometry exploited in HIF, and could therefore potentially be reduced via more sophisticated implementations.  For all three, however, we see scaling of $t_f$ that appears better than $\O(N^{2})$ but not quite $\O(N)$ as would be predicted under our assumptions.  One possibility is \emph{boundary effects}: essentially, we are perhaps not yet in the asymptotic regime because
\begin{enumerate}
\item[(a)] boxes at the boundary of $\Omega$ have smaller sets of near- and far-field DOFs than do interior boxes, and are therefore cheaper to skeletonize, and,
\item[(b)] the number of interior boxes is relatively small in 3D for small $N$.
\end{enumerate}
Another possibility is simply that our assumption on rank behavior is incorrect at $\epsilon=10^{-6}$ for this example.  Unfortunately, distinguishing between these cases requires larger tests than are currently feasible due to memory constraints.

\begin{sidewaystable}[ph!]
\small
\ra{1.0}
\centering

\caption{Timing and memory results for Example 2 \label{tab:3Dcube}}
\begin{tabular}{@{}@{\extracolsep{-1pt}}cc|ccc|ccc|ccc|ccc@{}}\toprule
&\multicolumn{1}{c}{}&\multicolumn{3}{c}{RS} & \multicolumn{3}{c}{HIF} & \multicolumn{3}{c}{RS-S} & \multicolumn{3}{c}{RS-WS}\\ \cmidrule(lr){3-5} \cmidrule(lr){6-8} \cmidrule(lr){9-11} \cmidrule(lr){12-14}
$ \epsilon$ & $N$ &$t_f$ &$t_s$ &$m_f$ & $t_f$& $t_s$ & $m_f$ &$t_f$ & $t_s$ &$m_f$&$t_f$ & $t_s$ & $m_f$ \\\midrule
   \multirow{ 4}{*}{$10^{-3}$}
 &$32^3$   &$\scinote{2.2}{+}{1}$ & $\scinote{5.5}{-}{1}$ & $\scinote{5.5}{-}{1}$
 & $\scinote{3.5}{+}{1}$ &  $\scinote{1.7}{-}{1}$ & $\scinote{1.8}{-}{1}$
 &  $\scinote{3.3}{+}{1}$ & $\scinote{2.3}{-}{1}$ & $\scinote{3.8}{-}{1}$
 & $\scinote{4.6}{+}{1}$ & $\scinote{2.0}{-}{1}$ & $\scinote{2.7}{-}{1}$

 \\

 &$64^3$  & $\scinote{6.4}{+}{2}$ & $\scinote{1.2}{+}{1}$ & $\scinote{1.0}{+}{1}$
 & $\scinote{4.9}{+}{2}$ &  $\scinote{1.3}{+}{0}$ & $\scinote{1.8}{+}{0}$
 &  $\scinote{3.5}{+}{2}$ & $\scinote{1.9}{+}{0}$ & $\scinote{3.8}{+}{0}$
 & $\scinote{5.2}{+}{2}$ & $\scinote{1.9}{+}{0}$ & $\scinote{2.6}{+}{0}$
\\

 &$128^3$  & $\scinote{2.4}{+}{4}$ & $\scinote{2.3}{+}{2}$ & $\scinote{1.9}{+}{2}$
 & $\scinote{6.4}{+}{3}$ &  $\scinote{1.3}{+}{1}$ & $\scinote{1.8}{+}{1}$
 &  $\scinote{3.4}{+}{3}$ & $\scinote{2.8}{+}{1}$ & $\scinote{3.5}{+}{1}$
 & $\scinote{5.2}{+}{3}$ & $\scinote{2.8}{+}{1}$ & $\scinote{2.5}{+}{1}$\\

  &$256^3$  & -- & -- & --
 & $\scinote{8.9}{+}{4}$ &  $\scinote{1.3}{+}{2}$ & $\scinote{1.6}{+}{2}$
 &  $\scinote{3.2}{+}{4}$ & $\scinote{2.4}{+}{2}$ & $\scinote{3.0}{+}{2}$
 & $\scinote{5.0}{+}{4}$ & $\scinote{2.7}{+}{2}$ & $\scinote{2.1}{+}{2}$
\\\midrule
   \multirow{ 4}{*}{$10^{-6}$}
 &$32^3$  & $\scinote{5.1}{+}{1}$ & $\scinote{1.6}{+}{0}$ & $\scinote{1.5}{+}{0}$
 & $\scinote{2.1}{+}{2}$ &  $\scinote{7.7}{-}{1}$ & $\scinote{8.5}{-}{1}$
 &  $\scinote{8.1}{+}{1}$ & $\scinote{7.3}{-}{1}$ & $\scinote{2.1}{+}{0}$
 & $\scinote{6.3}{+}{1}$ & $\scinote{7.7}{-}{1}$ & $\scinote{1.6}{+}{0}$\\

 &$64^3$  & $\scinote{2.4}{+}{3}$ & $\scinote{4.0}{+}{1}$ & $\scinote{3.1}{+}{1}$
 & $\scinote{7.3}{+}{3}$ &  $\scinote{8.9}{+}{0}$ & $\scinote{1.3}{+}{1}$
 &  $\scinote{1.2}{+}{3}$ & $\scinote{5.7}{+}{0}$ & $\scinote{2.1}{+}{1}$
 & $\scinote{1.2}{+}{3}$ & $\scinote{5.0}{+}{0}$ & $\scinote{1.5}{+}{1}$
\\
 &$128^3$ &  $\scinote{1.1}{+}{5}$ & $\scinote{7.2}{+}{2}$ & $\scinote{5.8}{+}{2}$
 & $\scinote{2.2}{+}{5}$ &  $\scinote{1.2}{+}{2}$ & $\scinote{1.8}{+}{2}$
 &  $\scinote{1.5}{+}{4}$ & $\scinote{5.0}{+}{1}$ & $\scinote{2.2}{+}{2}$
 & $\scinote{1.5}{+}{4}$ & $\scinote{3.7}{+}{1}$ & $\scinote{1.6}{+}{2}$
\\
  &$256^3$ & --  & -- & --
 &  -- & --  & --
 &  $\scinote{2.0}{+}{5}$ & $\scinote{4.5}{+}{2}$ & $\scinote{2.2}{+}{3}$
 & $\scinote{2.0}{+}{5}$ & $\scinote{3.2}{+}{2}$ & $\scinote{1.6}{+}{3}$

\\
\bottomrule\\
\end{tabular}

\bigskip\bigskip

\caption{Accuracy results for Example 2\label{tab:3Dcubeapply}}
\begin{tabular}{@{}@{\extracolsep{-1pt}}cc|ccc|ccc|ccc|ccc@{}}\toprule
&\multicolumn{1}{c}{}&\multicolumn{3}{c}{RS}& \multicolumn{3}{c}{HIF}&\multicolumn{3}{c}{RS-S} & \multicolumn{3}{c}{RS-WS}\\ \cmidrule(lr){3-5} \cmidrule(lr){6-8} \cmidrule(lr){9-11} \cmidrule(lr){12-14}
$ \epsilon$ & $N$ & $e_a$ &$e_s$  &$n_i$& $e_a$ &$e_s$ & $n_i$& $e_a$ &$e_s$ &  $n_i$& $e_a$ &$e_s$& $n_i$ \\\midrule
   \multirow{ 4}{*}{$10^{-3}$}
 &$32^3$   & $\scinote{2.3}{-}{04}$ & $\scinote{2.3}{-}{02}$ & 6
 & $\scinote{3.1}{-}{04}$ & $\scinote{2.3}{-}{02}$ &  6
 & $\scinote{9.1}{-}{05}$ &  $\scinote{1.2}{-}{02}$ & 5
 & $\scinote{1.9}{-}{04}$ &$\scinote{8.8}{-}{02}$ & 7

\\
 &$64^3$   & $\scinote{3.3}{-}{04}$ & $\scinote{4.5}{-}{02}$ & 7
 & $\scinote{3.6}{-}{04}$ & $\scinote{5.4}{-}{02}$ &  6
 & $\scinote{1.1}{-}{04}$ &  $\scinote{2.5}{-}{02}$ & 6
 & $\scinote{2.3}{-}{04}$ &$\scinote{1.0}{-}{01}$ & 8

\\
 &$128^3$   & $\scinote{1.1}{-}{03}$ & $\scinote{1.2}{-}{01}$ & 8
 & $\scinote{1.2}{-}{03}$ & $\scinote{8.3}{-}{02}$ &  8
 & $\scinote{1.2}{-}{04}$ &  $\scinote{4.3}{-}{02}$ & 6
 & $\scinote{3.0}{-}{04}$ &$\scinote{9.7}{-}{02}$ & 9
 \\
 & $256^3$& -- & -- & --
 & $\scinote{3.2}{-}{03}$ & $\scinote{2.0}{-}{01}$ &  11
 & $\scinote{1.3}{-}{04}$ &  $\scinote{9.0}{-}{02}$ & 7
 & $\scinote{7.9}{-}{04}$ &$\scinote{3.4}{-}{01}$ & 11    \\\midrule
   \multirow{ 4}{*}{$10^{-6}$}&

 $32^3$   &$\scinote{1.8}{-}{07}$ & $\scinote{4.3}{-}{05}$ & 3
 & $\scinote{1.2}{-}{07}$ & $\scinote{2.8}{-}{05}$ &  2
 & $\scinote{2.0}{-}{08}$ &  $\scinote{1.8}{-}{05}$ & 2
 & $\scinote{2.2}{-}{07}$ &$\scinote{1.6}{-}{04}$ & 3

\\
 &$64^3$   & $\scinote{3.2}{-}{07}$ & $\scinote{7.1}{-}{05}$ & 3
 & $\scinote{2.4}{-}{07}$ & $\scinote{9.2}{-}{05}$ &  3
 & $\scinote{3.1}{-}{08}$ &  $\scinote{2.7}{-}{05}$ & 2
 & $\scinote{2.7}{-}{07}$ &$\scinote{1.9}{-}{04}$ & 3

\\
 &$128^3$   & $\scinote{6.2}{-}{07}$ & $\scinote{1.7}{-}{04}$ & 3
 & $\scinote{3.5}{-}{07}$ & $\scinote{1.7}{-}{04}$ &  3
 & $\scinote{4.0}{-}{08}$ &  $\scinote{4.0}{-}{05}$ & 3
 & $\scinote{4.7}{-}{07}$ &$\scinote{3.9}{-}{04}$ & 3
 \\
 & $256^3$& -- & -- & --
 & -- & -- &  --
 & $\scinote{4.5}{-}{08}$ &  $\scinote{8.2}{-}{05}$ & 3
 & $\scinote{8.0}{-}{07}$ &$\scinote{5.9}{-}{04}$ & 3    \\
\bottomrule\\
\end{tabular}
\end{sidewaystable}

As in the 2D case, \cref{tab:3Dcubeapply} shows that the approximate relative operator-norm error $e_a$ seems relatively well-controlled by $\epsilon$, though we observe small growth in $N$.  Similarly, the bound $e_s$ on the relative error for the inverse operator again loses a few digits compared to $e_a$ due to conditioning.  For $\epsilon=10^{-3}$ the number of CG iterations to convergence grows with $N$, albeit slowly.  For $\epsilon=10^{-6}$, however, $n_i$ remains stable.

\subsection{Example 3: unit sphere in 3D}
As a final example, we move to a more complicated 3D geometry.  Letting $\response{G(z) = \frac{1}{4\pi \|z\|}}$, we take $a(x)\equiv -1/2$, and $b(x)\equiv c(x) \equiv 1$ on the unit sphere $\Omega=S^2$ to obtain the second-kind boundary integral equation
\begin{align}\label{eq:ex3}
-\frac{1}{2}u(x) + \int_{S^2} \frac{\partial G}{\partial n_y}(x-y)u(y)\,dy = f(x), \quad x\in S^2,
\end{align}
where our kernel is the normal derivative of the Green's function for the Laplace equation in 3D.  \response{This corresponds to a double-layer potential solution representation for the interior Dirichlet Laplace problem on the unit sphere, that is, taking
\begin{align}\label{eq:getsol}
w(x) \equiv \int_{S^2} \frac{\partial G}{\partial n_y}(x-y)u(y)\,dy
\end{align}
 we have $\Delta w(x) = 0$ for $x$ inside the unit ball and $w(x) = f(x)$ on $S^2$.}

While it possible to build a periodic quadtree on a 2D parameterization of $S^2$, we treat the discretization of the sphere as points in $\R^3$ and use an octree. We use a centroid collocation scheme to discretize \eqref{eq:ex3},
wherein we represent $S^2$ as a collection of flat triangles and treat
all near-field interactions using fourth-order tensor-product
Gauss-Legendre quadrature, where we define near-field interactions as
interactions between triangles separated by a distance less than the
average triangle diameter.  Note that this leads to \response{an unsymmetric
matrix} $\m{K}$.  We choose the occupancy parameter $n_\text{occ}=256$
and $n_p=512$ random proxy points as in Example 2.

Timing and memory results for $\epsilon=10^{-3}$ and $\epsilon=10^{-6}$ can be seen in \cref{fig:sphereresults} with corresponding data for RS-S and RS-WS in \cref{tab:3Dsphere}.  For $\epsilon=10^{-3}$, all quantities behave definitively linearly as expected, with RS-WS again offering a trade-off between runtime and memory usage.  For $\epsilon=10^{-6}$, however, we actually observe \emph{sublinear} scaling of $t_f$ with $N$, which clearly indicates non-asymptotic behavior.  Further, looking at $m_f$ we see that memory usage is not significantly lessened with RS-WS for this example due to the fact that near-field compression is only mildly effective at this precision level. 

In \cref{tab:3Dsphereapply} we provide $e_a$ and $e_s$ for this example as well as a new quantity, $e_p$.  The explanation of this is as follows.  First, we choose 16 random sources $\{y_j\}$ with $\|y_j\| = 2$ and construct the harmonic field
\begin{align*}
v(x) &\equiv \sum_j G(x - y_j)q_j,
\end{align*}
where each $q_j$ is a standard normal random variable.  Taking the boundary data $f(x)$ in \eqref{eq:ex3} to be $v(x)$ on $S^2$, \response{the analytic solution to the interior Laplace boundary value problem is exactly $v(x)$ from uniqueness.  Numerically, we may use \eqref{eq:getsol} to reconstruct $\hat v(x) \approx v(x)$.}  Taking 16 random targets $\{z_j\}$ with $\|z_j\|=1/2$, we compute the relative error $e_p$ between $\{v(z_j)\}$ and $\{\hat v(z_j)\}$.  These results show that both RS-S and RS-{}WS both solve the integral equation \eqref{eq:ex3} up to discretization error.  \response{We do not provide preconditioning results for this example, as the linear operator is well-conditioned even without preconditioning.}

\begin{figure}
\centering
\includegraphics[scale=0.325]{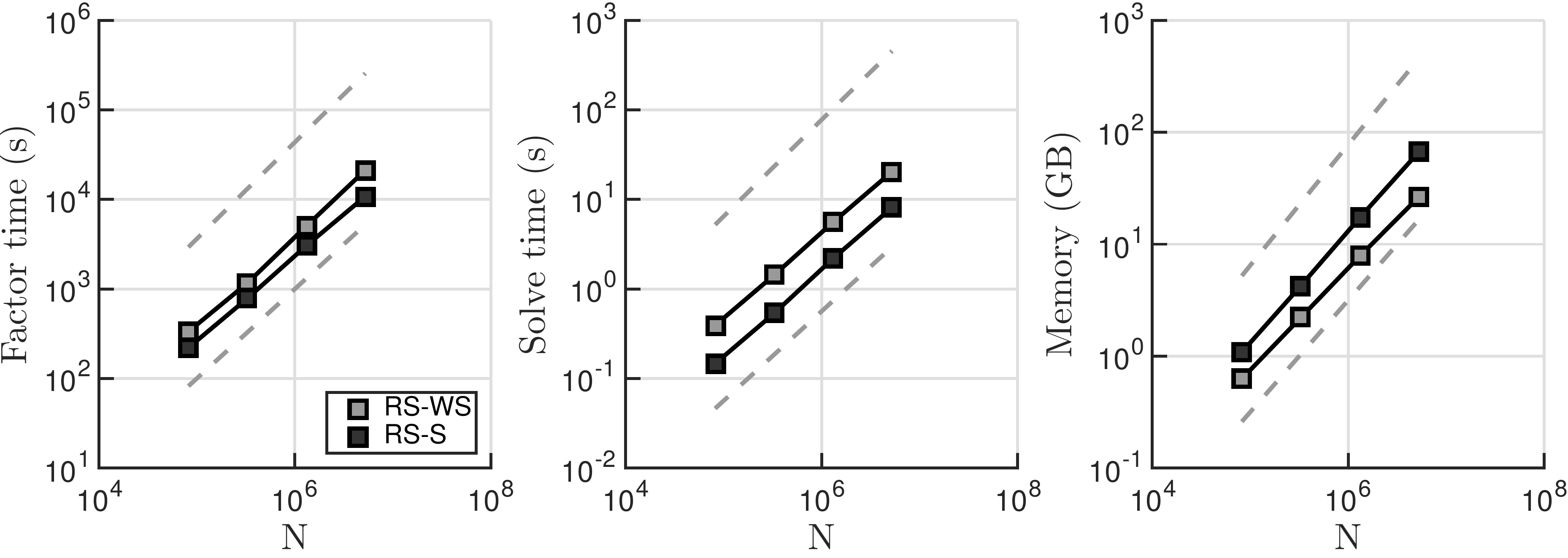}

\vspace{0.5cm}

\includegraphics[scale=0.325]{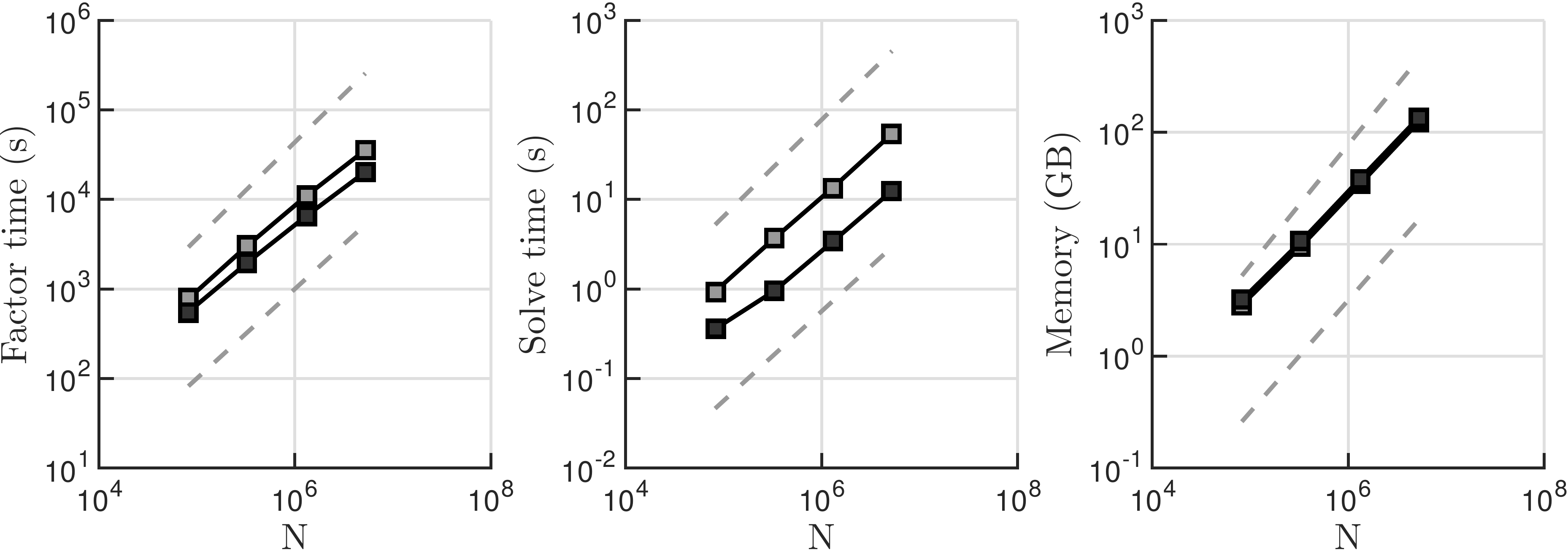}

\caption{Wall clock factor times $t_f$ and solve times $t_s$ and memory usage $m_f$ from Example 3 are shown for $\epsilon=10^{-3}$ (top row) and $\epsilon=10^{-6}$ (bottom row).  Each plot follows the top-left legend, with additional reference scaling curves $\O(N\log N)$ and $\O(N)$.  Corresponding data are given in \cref{tab:3Dcube}.  Note that in the last subplot the curves for RS-S and RS-WS lie nearly on top of each other.
\label{fig:sphereresults}}
\end{figure}

\begin{table}
\small
\ra{1.0}
\centering

\caption{Timing and memory results for Example 3 \label{tab:3Dsphere}}
\begin{tabular}{@{}@{\extracolsep{-1pt}}cc|ccc|ccc@{}}\toprule
&\multicolumn{1}{c}{}& \multicolumn{3}{c}{RS-S} & \multicolumn{3}{c}{RS-WS}\\ \cmidrule(lr){3-5} \cmidrule(lr){6-8}
$ \epsilon$ & $N$ &$t_f$ &$t_s$ &$m_f$ & $t_f$& $t_s$ & $m_f$ \\\midrule
   \multirow{ 4}{*}{$10^{-3}$}
 &$81920$   &$\scinote{2.2}{+}{2}$  & $\scinote{1.5}{-}{1}$ & $\scinote{1.1}{+}{0}$
 & $\scinote{3.3}{+}{2}$  & $\scinote{3.9}{-}{1}$ & $\scinote{6.3}{-}{1}$\\

 &$327680$  & $\scinote{7.8}{+}{2}$  & $\scinote{5.3}{-}{1}$ & $\scinote{4.2}{+}{0}$
 & $\scinote{1.2}{+}{3}$ & $\scinote{1.4}{+}{0}$ & $\scinote{2.2}{+}{0}$ \\

 &$1310720$  & $\scinote{3.0}{+}{3}$  & $\scinote{2.2}{+}{0}$ & $\scinote{1.7}{+}{1}$
 & $\scinote{5.0}{+}{3}$  & $\scinote{5.6}{+}{0}$ & $\scinote{7.8}{+}{0}$\\

  &$5242880$  & $\scinote{1.1}{+}{4}$  & $\scinote{8.1}{+}{0}$ & $\scinote{6.7}{+}{1}$
 & $\scinote{2.1}{+}{4}$  & $\scinote{2.0}{+}{1}$ & $\scinote{2.6}{+}{1}$
\\\midrule
   \multirow{ 4}{*}{$10^{-6}$}
 &$81920$  &  $\scinote{5.5}{+}{2}$  & $\scinote{3.6}{-}{1}$ & $\scinote{3.2}{+}{0}$
 & $\scinote{7.8}{+}{2}$  & $\scinote{9.4}{-}{1}$ & $\scinote{2.9}{+}{0}$
\\

 &$327680$  & $\scinote{2.0}{+}{3}$  & $\scinote{9.5}{-}{1}$ & $\scinote{1.1}{+}{1}$
 & $\scinote{3.0}{+}{3}$  & $\scinote{3.7}{+}{0}$ & $\scinote{9.5}{+}{0}$
\\
 &$1310720$ &  $\scinote{6.5}{+}{3}$& $\scinote{3.4}{+}{0}$  & $\scinote{3.8}{+}{1}$
 & $\scinote{1.1}{+}{4}$  & $\scinote{1.4}{+}{1}$ & $\scinote{3.4}{+}{1}$

\\
  &$5242880$ & $\scinote{2.0}{+}{4}$  & $\scinote{1.3}{+}{1}$ & $\scinote{1.4}{+}{2}$
 & $\scinote{3.6}{+}{4}$ & $\scinote{5.4}{+}{1}$ & $\scinote{1.2}{+}{2}$

\\
\bottomrule\\
\end{tabular}
\end{table}

\begin{table}
\small
\ra{1.0}
\centering

\caption{Accuracy results for Example 3\label{tab:3Dsphereapply}}
\begin{tabular}{@{}@{\extracolsep{-1pt}}cc|ccc|ccc@{}}\toprule
&\multicolumn{1}{c}{}&\multicolumn{3}{c}{RS-S} & \multicolumn{3}{c}{RS-WS}\\ \cmidrule(lr){3-5} \cmidrule(lr){6-8}
$ \epsilon$ & $N$ & $e_a$ &$e_s$ & $e_p$& $e_a$ &$e_s$  & $e_p$\\\midrule
   \multirow{ 4}{*}{$10^{-3}$}

 &$81920$   & $\scinote{2.2}{-}{04}$ & $\scinote{2.2}{-}{04}$ & $\scinote{4.0}{-}{04}$
 & $\scinote{6.2}{-}{04}$ & $\scinote{6.8}{-}{04}$ &  $\scinote{4.8}{-}{04}$

\\
 &$327680$   & $\scinote{4.3}{-}{04}$ & $\scinote{4.3}{-}{04}$ & $\scinote{2.2}{-}{04}$
 & $\scinote{9.7}{-}{04}$ & $\scinote{9.8}{-}{04}$ &  $\scinote{2.6}{-}{04}$
 \\
 & $1310720$& $\scinote{7.5}{-}{04}$ & $\scinote{7.5}{-}{04}$ & $\scinote{1.6}{-}{04}$
 & $\scinote{1.4}{-}{03}$ & $\scinote{1.4}{-}{03}$ &  $\scinote{2.3}{-}{04}$   \\

  &$5242880$   & $\scinote{1.1}{-}{03}$ & $\scinote{1.1}{-}{03}$ & $\scinote{1.6}{-}{04}$
 & $\scinote{1.9}{-}{03}$ & $\scinote{1.9}{-}{03}$ &  $\scinote{2.3}{-}{04}$
\\\midrule
    \multirow{ 4}{*}{$10^{-6}$}

 &$81920$   & $\scinote{6.9}{-}{07}$ & $\scinote{6.9}{-}{07}$ & $\scinote{3.8}{-}{04}$
 & $\scinote{1.0}{-}{06}$ & $\scinote{1.0}{-}{06}$ &  $\scinote{3.7}{-}{04}$

\\
 &$327680$   & $\scinote{1.3}{-}{06}$ & $\scinote{1.3}{-}{06}$ & $\scinote{1.8}{-}{04}$
 & $\scinote{1.9}{-}{06}$ & $\scinote{1.9}{-}{06}$ &  $\scinote{1.8}{-}{04}$
 \\
 & $1310720$& $\scinote{1.7}{-}{06}$ & $\scinote{1.7}{-}{06}$ & $\scinote{9.0}{-}{05}$
 & $\scinote{2.8}{-}{06}$ & $\scinote{2.8}{-}{06}$ &  $\scinote{8.9}{-}{05}$  \\

  &$5242880$   & $\scinote{2.2}{-}{06}$ & $\scinote{2.2}{-}{06}$ & $\scinote{4.4}{-}{05}$
 & $\scinote{3.9}{-}{06}$ & $\scinote{3.9}{-}{06}$ &  $\scinote{4.4}{-}{05}$\\
\bottomrule\\
\end{tabular}
\end{table}

\section{Conclusions}
\label{sec:conclusions}
By modifying the recursive skeletonization process of Martinsson \& Rokhlin \cite{martinsson-rokhlin} to operate directly on strongly-admissible structure (\ie, perform only far-field compression), we obtain a new factorization, RS-S, that is useful for solving integral equations in $\R^2$ and $\R^3$ both as a medium-accuracy direct solver and as an excellent preconditioner for iterative methods.  \response{As a high-accuracy direct solver, the performance of RS-S in 3D is less practical due to memory requirements, though 2D performance remains competitive.}
We further offer a modification of our approach, RS-WS, which gives a trade-off between runtime complexity and storage complexity through additional levels of compression.

We apply both factorizations to a number of examples to evaluate performance according to a number of metrics.  While we focus in this paper on solving integral equations, the linear algebraic machinery developed can be applied much more broadly for general structured matrices (\eg, kernelized covariance matrices \cite{mindengp}).

Compared to other skeletonization-based methods for obtaining linear or nearly-linear complexity factorizations such as HIF \cite{hifie} or the method of Corona et al. \cite{corona2013} (for 2D problems), our approach is competitive, but more importantly is simpler to implement.  In particular, these previous methods based on near-field compression have obtained better runtime complexity at the cost of increased algorithmic complexity by introducing additional geometric information beyond the tree decomposition of space.  By working directly with strong admissibility, this becomes unnecessary.  \response{For 3D problems, we also observe better runtime performance than HIF, see \cref{fig:3dresults}.}

In contrast to the IFMM of Coulier et al. \cite{ifmm2} and Ambikasaran \& Darve \cite{ifmm1}, which provides a fast method for solving integral equations based on exploiting strong admissibility through a telescoping additive decomposition, our method takes the form of a multiplicative factorization.  This gives greater flexibility in that we may compute approximate \response{generalized} square-roots or log-determinants --- essentially, we get the benefit of having a true (albeit approximate) triangular factorization.  Further, building on the skeletonization framework allows \response{accelerated compression throughout our algorithm} due to the use of a proxy surface as described in \cref{sec:proxy}.

The most pressing direction for future research is understanding the rank behavior of far-field blocks that have been subject to Schur complement updates through the skeletonization process as discussed in \cref{sec:leaf}.  The efficiency of our factorizations hinges on these blocks remaining low-rank as the algorithm progresses, which seems to be more-or-less true in our numerical experiments.  Another direction of future research is understanding the additional approximation error introduced through the use of a proxy surface in \cref{sec:proxy}.  In particular, while it follows from the discussion of \cref{sec:proxy} that an exact ID using proxy points leads to an exact compression of the far-field interactions, it is not immediately evident how tight a bound on the relative error in an approximate ID might be attainable when the IDs are no longer exact and when the discrete approximation to Green's identity is used.

Finally, due to the simple tree structure, RS-S and RS-WS are both easily parallelizable.  For example, on a regular 2D grid we may use a four-coloring of the boxes on each level as in \cref{fig:parallel}.  In this case, we may perform strong skeletonization with respect to the DOF sets of each brown leaf box in parallel, then similarly for each blue leaf box and so on.  This ordering should also, in principle, allow for adaptation of fast factorization-updating algorithms such as we \response{described} in earlier work \cite{mindenupdate}.

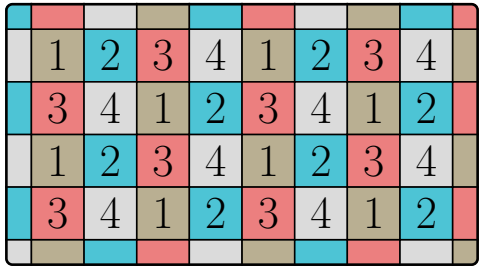
\begin{figure}
\centering
\scalebox{0.7}{
  \begin{tikzpicture}
    \clip[rounded corners](-0.5,-0.5) rectangle (8.5,4.5);

\foreach \y in {-2}{
   \filldraw[fill=lightgray,draw=black,thick,fill opacity=0.7] (-1,\y+1) rectangle (0, \y+2);

{}
   \filldraw[fill=box,draw=black,thick,fill opacity=0.54] (0,\y+1) rectangle (1, \y+2);

\filldraw[fill=near,draw=black,thick,fill opacity=0.7] (1,\y+1) rectangle (2, \y+2);

  \filldraw[fill=far,draw=black,thick,fill opacity=0.7] (2,\y+1) rectangle (3, \y+2);

    \filldraw[fill=lightgray,draw=black,thick,fill opacity=0.7] (3,\y+1) rectangle (4, \y+2);

      \filldraw[fill=box,draw=black,thick,fill opacity=0.54] (4,\y+1) rectangle (5, \y+2);

\filldraw[fill=near,draw=black,thick,fill opacity=0.7] (5,\y+1) rectangle (6, \y+2);

      \filldraw[fill=far,draw=black,thick,fill opacity=0.7] (6,\y+1) rectangle (7, \y+2);

      \filldraw[fill=lightgray,draw=black,thick,fill opacity=0.7] (7,\y+1) rectangle (8, \y+2);

     \filldraw[fill=box,draw=black,thick,fill opacity=0.54] (8,\y+1) rectangle (9, \y+2);
  }

 \foreach \y in {0,2}{
   \filldraw[fill=lightgray,draw=black,thick,fill opacity=0.7] (-1,\y+1) rectangle (0, \y+2);

   \filldraw[fill=box,draw=black,thick,fill opacity=0.54] (0,\y+1) rectangle (1, \y+2);
  \node at (0+0.5,\y+1.5) {\Huge$1$};

\filldraw[fill=near,draw=black,thick,fill opacity=0.7] (1,\y+1) rectangle (2, \y+2);
  \node at (1+0.5,\y+1.5) {\Huge$2$};

  \filldraw[fill=far,draw=black,thick,fill opacity=0.7] (2,\y+1) rectangle (3, \y+2);
  \node at (2+0.5,\y+1.5) {\Huge$3$};

    \filldraw[fill=lightgray,draw=black,thick,fill opacity=0.7] (3,\y+1) rectangle (4, \y+2);
  \node at (3+0.5,\y+1.5) {\Huge$4$};

      \filldraw[fill=box,draw=black,thick,fill opacity=0.54] (4,\y+1) rectangle (5, \y+2);
  \node at (4+0.5,\y+1.5) {\Huge$1$};

\filldraw[fill=near,draw=black,thick,fill opacity=0.7] (5,\y+1) rectangle (6, \y+2);
  \node at (5+0.5,\y+1.5) {\Huge$2$};

      \filldraw[fill=far,draw=black,thick,fill opacity=0.7] (6,\y+1) rectangle (7, \y+2);
  \node at (6+0.5,\y+1.5) {\Huge$3$};
      \filldraw[fill=lightgray,draw=black,thick,fill opacity=0.7] (7,\y+1) rectangle (8, \y+2);
  \node at (7+0.5,\y+1.5) {\Huge$4$};

     \filldraw[fill=box,draw=black,thick,fill opacity=0.54] (8,\y+1) rectangle (9, \y+2);
  }

   \foreach \y in {-1,1}{
      \filldraw[fill=near,draw=black,thick,fill opacity=0.7] (-1,\y+1) rectangle (0, \y+2);
     \filldraw[fill=far,draw=black,thick,fill opacity=0.7] (0,\y+1) rectangle (1, \y+2);
  \node at (0+0.5,\y+1.5) {\Huge$3$};
      \filldraw[fill=lightgray,draw=black,thick,fill opacity=0.7] (1,\y+1) rectangle (2, \y+2);
  \node at (1+0.5,\y+1.5) {\Huge$4$};

      \filldraw[fill=box,draw=black,thick,fill opacity=0.54] (2,\y+1) rectangle (3, \y+2);
  \node at (2+0.5,\y+1.5) {\Huge$1$};

\filldraw[fill=near,draw=black,thick,fill opacity=0.7] (3,\y+1) rectangle (4, \y+2);
  \node at (3+0.5,\y+1.5) {\Huge$2$};

    \filldraw[fill=far,draw=black,thick,fill opacity=0.7] (4,\y+1) rectangle (5, \y+2);
  \node at (4+0.5,\y+1.5) {\Huge$3$};
      \filldraw[fill=lightgray,draw=black,thick,fill opacity=0.7] (5,\y+1) rectangle (6, \y+2);
  \node at (5+0.5,\y+1.5) {\Huge$4$};

  \filldraw[fill=box,draw=black,thick,fill opacity=0.54] (6,\y+1) rectangle (7, \y+2);
  \node at (6+0.5,\y+1.5) {\Huge$1$};

  \filldraw[fill=near,draw=black,thick,fill opacity=0.7] (7,\y+1) rectangle (8, \y+2);
  \node at (7+0.5,\y+1.5) {\Huge$2$};

     \filldraw[fill=far,draw=black,thick,fill opacity=0.7] (8,\y+1) rectangle (9, \y+2);
  }

  \foreach \y in {3}{
      \filldraw[fill=near,draw=black,thick,fill opacity=0.7] (-1,\y+1) rectangle (0, \y+2);
     \filldraw[fill=far,draw=black,thick,fill opacity=0.7] (0,\y+1) rectangle (1, \y+2);

      \filldraw[fill=lightgray,draw=black,thick,fill opacity=0.7] (1,\y+1) rectangle (2, \y+2);

      \filldraw[fill=box,draw=black,thick,fill opacity=0.54] (2,\y+1) rectangle (3, \y+2);

\filldraw[fill=near,draw=black,thick,fill opacity=0.7] (3,\y+1) rectangle (4, \y+2);

    \filldraw[fill=far,draw=black,thick,fill opacity=0.7] (4,\y+1) rectangle (5, \y+2);

      \filldraw[fill=lightgray,draw=black,thick,fill opacity=0.7] (5,\y+1) rectangle (6, \y+2);

  \filldraw[fill=box,draw=black,thick,fill opacity=0.54] (6,\y+1) rectangle (7, \y+2);

  \filldraw[fill=near,draw=black,thick,fill opacity=0.7] (7,\y+1) rectangle (8, \y+2);

     \filldraw[fill=far,draw=black,thick,fill opacity=0.7] (8,\y+1) rectangle (9, \y+2);
  }
  \draw[use as bounding box,draw=black,line width=1mm, rounded corners](-0.5,-0.5) rectangle (8.5,4.5);
  \end{tikzpicture}
  }

  \caption{\label{fig:parallel} To parallelize RS and RS-WS efficiently, a four-coloring of the domain (shown) can be used such that no box is colored the same as its neighbors.  All boxes of a given color may be skeletonized independently.  The 3D case is similar, albeit requiring more colors.}
\end{figure}


\section*{Acknowledgments}
\response{The authors thank the referees for valuable feedback, particularly on strengthening the statement of \cref{cor:scaling}.  V.M. would also like to thank A. Benson, R. Estrin, Y. Li, B. Nelson, N. Skochdopole, and X. Suo for useful comments on early drafts of this manuscript, as well as Stanford University and the Stanford Research Computing Center for providing computational resources and support that have contributed to these research results.}
\bibliographystyle{siamplain}
\bibliography{rss}
\label{LastPage}
\end{document}